\documentclass{amsart}
\usepackage{amsmath}
\usepackage{graphicx}
\usepackage{latexsym}
\usepackage{color}
\usepackage{amscd}
\usepackage[all]{xy}
\usepackage{enumerate}
\usepackage{enumitem}
\usepackage{hyperref}
\usepackage{subfigure}
\usepackage{soul}
\usepackage{comment}
\usepackage{epsfig}
\usepackage{epstopdf}
\usepackage{marginnote}
\usepackage{listings}
\usepackage{xcolor}
\DeclareTextCommand{\k}{OT1}[1]{\UseTextAccent{T1}{\k}{#1}}

\usepackage{multirow}

\newtheorem{thma}{Theorem}

\newtheorem{thmb}{Theorem}

\newtheorem{thmc}{Theorem}

\newtheorem{thm}{Theorem}
\newtheorem{lem}[thm]{Lemma}
\newtheorem{cor}[thm]{Corollary}
\newtheorem{prop}[thm]{Proposition}

\newtheorem{corollary}[thm]{Corollary}
\newtheorem{proposition}[thm]{Proposition}
\newtheorem{question}[thm]{Question}

\theoremstyle{definition}

\newtheorem*{definition*}{Definition}

\newtheorem{remark}[thm]{Remark}

\newtheorem{properties}[thm]{Properties}

\newcommand{\CPb}{\overline{\mathbb{CP}}{}^{2}}
\newcommand{\CP}{{\mathbb{CP}}{}^{2}}

\newcommand{\R}{\mathbb{R}}

\newcommand{\Z}{\mathbb{Z}}

\newcommand{\twprod}{\mathbin{\mathchoice%
    {\ooalign{\raise1.15ex\hbox{$\scriptstyle\sim$}\cr\hidewidth$\times$\hidewidth\cr}}%
    {\ooalign{\raise1.15ex\hbox{$\scriptstyle\sim$}\cr\hidewidth$\times$\hidewidth\cr}}%
    {\ooalign{\raise.85ex\hbox{$\scriptscriptstyle\sim$}\cr\hidewidth$\scriptstyle\times$\hidewidth\cr}}%
    {\ooalign{\raise.65ex\hbox{$\scriptscriptstyle\sim$}\cr\hidewidth$\scriptscriptstyle\times$\hidewidth\cr}}%   
    }}

\newcommand{\QED}{\hfill \ensuremath{\Box}}
\newcommand{\M}{\operatorname{Mod}}

\usepackage{scalerel} %Used to change sizes of subscripts
\newcommand{\DT}[1]{t_{\scaleto{\mathstrut #1}{9pt}}}

\def \eu{{\text{e}}}

\allowdisplaybreaks[4]

\begin{document}

\title[Sections, pseudosections, and multisections] 
%{Not every Lefschetz fibration over the $2$--sphere has a section}
{Sections, pseudosections, and multisections \\ of Lefschetz fibrations}

\author[R. \.{I}. Baykur]{R. \.{I}nan\c{c} Baykur}
\address{Department of Mathematics and Statistics, University of Massachusetts, Amherst, MA 01003, USA}
\email{inanc.baykur@umass.edu}

\author[N. Hamada]{Noriyuki Hamada}
\address{Institute of Mathematics for Industry, Kyushu University, Fukuoka 819-0395, Japan}
\email{hamada@imi.kyushu-u.ac.jp}
%\date{\today}

\begin{abstract}
We show that not every symplectic Lefschetz fibration over the $2$--sphere admits a smooth section, settling a long-standing open problem. To prove this, we develop a method for studying sections via lifts of monodromy factorizations with point-pushing maps and the induced handle decomposition of the total space, and apply it to an infinite family of Lefschetz fibrations. In contrast, we show that every Lefschetz fibration over $S^2$ admits positive multisections. Some of these examples admit no multisections with spherical components.
We also construct signature-zero Lefschetz fibrations of every odd genus $g\geq 7$, which in turn give symplectic Lefschetz fibrations of any prescribed signature in each of these genera.
\end{abstract}

%\vspace{-0.2in}

\maketitle

\setcounter{secnumdepth}{2}
\setcounter{section}{0}

\tableofcontents

%\vspace{0.15in}
\section{Introduction}
%======================================================================================

A fundamental yet elusive question in the theory of Lefschetz fibrations has been whether every such fibration over the $2$--sphere admits a smooth section. Listed as Problem~2.22 in the K3 problem list \cite{K3}, it has been raised repeatedly throughout the literature; see e.g. \cite{smith:sections, auroux:stable, onaran:genus2sections, lee-servan:sections, gompf:sections}, and Problems~2.8, 4.1, 7.1 and 8.1 in \cite{korkmaz:problems}, \cite{stipsicz:problems}, \cite{korkmaz-stipsicz:problems}, and \cite{endo:problems} respectively. Our main theorem gives a negative answer to this long-open question.

\smallskip
\begin{thma}\label{thm:a}
For each $g\in\{7,9\}$, there exist infinitely many genus--$g$ symplectic Lefschetz fibrations over the $2$--sphere that do not admit smooth sections. These examples admit pseudosections. Moreover, the genus--$9$ fibrations have no multisections with spherical components.
\end{thma}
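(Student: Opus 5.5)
The strategy is to build explicit monodromy factorizations and obstruct sections via point-pushing lifts. Recall that a genus-$g$ Lefschetz fibration $f\colon X\to S^2$ with monodromy factorization $t_{c_1}\cdots t_{c_n}=1$ in $\M(\Sigma_g)$ admits a section of self-intersection $-k$ if and only if the factorization lifts to $t_{\tilde c_1}\cdots t_{\tilde c_n}=t_\delta^{k}$ in $\M(\Sigma_g^1)$. Equivalently, it lifts to a relation in $\M(\Sigma_g,p)$ in which the product of the lifted twists is trivial. Lifts of a single Dehn twist $t_c$ to $\M(\Sigma_g,p)$ differ by conjugation by point-pushing maps. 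Using the Birman exact sequence
\[
1\to\pi_1(\Sigma_g,p)\to\M(\Sigma_g,p)\to\M(\Sigma_g)\to 1,
\]
the existence of a section therefore becomes the question of whether some choice of lifts $t_{\tilde c_i}$ makes the product, an element of $\pi_1(\Sigma_g)$, trivial. I would pass to a quotient in which this is computable. Abelianizing the point-pushing part sends the product to a class in $H_1(\Sigma_g)$. Each change of lift of $t_{c_i}$ alters this class by an element of the span of the $[c_i]$, twisted by the partial products. Hence the obstruction lies in a quotient of $H_1(\Sigma_g;\Z)$, possibly with $\Z_2$ or $\Z_m$ coefficients. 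Handle-theoretically, this is the class of the section's meridian pushed through the $2$-handles. A nonvanishing class then rules out every smooth section. Indeed, any smooth section can be isotoped to be disjoint from critical points and hence gives such a lift.

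The construction comes next. For $g=7,9$ I would take a family of fibrations in which all vanishing cycles $c_i$ span a proper sublattice of $H_1(\Sigma_g)$. A natural source is a factorization built from a branched-cover or lantern/chain substitution symmetric under an involution, so that $H_1(X)$ has torsion. The quotient $H_1(\Sigma_g)/\langle [c_i]\rangle$ must be nontrivial, yet the fibration should still be consistent with a pseudosection. Infinitely many examples come from twisting one subword by a power of a mapping class $\phi$ that commutes with it. This produces the factorizations $W_1\,\phi^m W_2\phi^{-m}$. Each is a positive factorization, hence symplectic by Gompf, and the members are distinguished by their fundamental group or $H_1$ for varying $m$. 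For each I would compute a canonical lift of the factorization to $\M(\Sigma_g,p)$. I would then show that its product is a point push along a loop whose class is nonzero in the obstruction group. This holds for every choice of lifts, since changes of lift only move within the subgroup that gets quotiented out.

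Pseudosections are handled separately. A pseudosection is a section with nodal or cusp-like (achiral, or singular at critical points) behaviour allowed near Lefschetz singularities. These exist by passing the section through critical points, which frees the lift constraint at those twists. I would exhibit one by choosing the base point on a vanishing cycle at one singular fiber. For the genus-$9$ claim, a multisection with a spherical component would be a sphere $S$ mapping to $S^2$ as a branched cover of some degree $d$. Its homology class pairs with the fiber to give $d$. I would show that $[F]$ is not hit with any such degree by a sphere class. I would try to compute that $\pi_2$-representable classes, via the universal cover or by showing $X$ is aspherical-like enough, all have zero fiber intersection. Alternatively, a mod-$2$ pairing $[S]\cdot[F]$ with a characteristic torsion class would force a contradiction.

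The main obstacle is making the section obstruction independent of all lift choices. Concretely, one must identify a quotient of $\pi_1(\Sigma_g)$ that is small enough that the conjugation ambiguities vanish in it, yet still detects the total point push. I would first try $H_1(\Sigma_g;\Z_2)/\langle [c_i]\rangle$ combined with the Euler class constraint $k$ from $t_\delta^k$.
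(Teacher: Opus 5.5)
\textbf{The central gap: your obstruction is abelian, and no abelian obstruction can prove this theorem.} The group you land in, $H_1(\Sigma_g;\Z)/\langle [c_1],\ldots,[c_n]\rangle$ (or its reduction mod $m$), is exactly $H_1(G;\Z)$. Here $X^\circ$ is the complement of a regular fiber and $G=\pi_1(X^\circ)\cong\pi_1(\Sigma_g)/\langle\!\langle c_1,\ldots,c_n\rangle\!\rangle$. Your class is therefore the abelianized meridian class $\overline\omega(f)$. Consider the exact sequence $H_2(X;\Z)\to\Z\to H_1(X^\circ;\Z)$ of the pair $(X,X^\circ)$. The first map is intersection with the fiber, and the second sends $1$ to $\pm\overline\omega(f)$. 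Hence a class $A$ with $A\cdot[\Sigma]=1$ exists if and only if $\overline\omega(f)=0$, and nonvanishing mod $m$ already implies nonvanishing over $\Z$. So every fibration your method certifies as section-free has no pseudosection, which contradicts the second claim of the theorem. (In the theorem, a pseudosection is an embedded surface meeting every fiber algebraically once. Your reinterpretation does not help: a continuous section, even one through a node, restricts to a disk in $X^\circ$ bounded by a meridian, so the same obstruction rules it out.)

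The missing idea is to keep the point-pushing loop $\gamma$ in the nonabelian group $G$. Its class $\omega(f)\in G$ is already independent of the chosen lifts and of Hurwitz moves. Moving a vanishing cycle across $p$ changes $\gamma$ only by elements of $N=\langle\!\langle c_1,\ldots,c_n\rangle\!\rangle$, and each $t_{c_j}$ acts trivially on $G$. This resolves your ``main obstacle'' without abelianizing. The examples must then have $\omega(f)\neq1$ but $\overline\omega(f)=0$, which can only be detected beyond $H_1$. The paper checks it in nilpotent quotients: the genus--$9$ word has infinite order in the class--$2$ quotient, and the genus--$7$ word has order $2$ in the integral class--$5$ quotient.

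\textbf{The remaining steps also fail as sketched.} You never actually produce a factorization, and the argument needs explicit ones. The paper breeds six copies of Smith's genus--$3$ pencil relation into a signature-zero genus--$7$ fibration, reuses an earlier genus--$9$ one, and tracks the marked point to get explicit words.

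Your family $W_1\phi^mW_2\phi^{-m}$ has constant Euler characteristic and signature, so telling its members apart is unsupported. Also, both $G$ and the point-pushing loop change with $m$, so every member would need its own nonabelian computation. Untwisted self fiber sums avoid both problems: $G$ is unchanged, $\omega(f_n)=\omega(f)^n$, and the Euler characteristic grows with $n$. So a single computation covers all $n$ in genus $9$ (infinite order) and all odd $n$ in genus $7$ (order $2$).

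Finally, because pseudosections exist, $X$ carries integral classes of every fiber degree. So pairing with the fiber, mod $2$ or otherwise, cannot exclude spherical components. The needed input is homotopical and comes from the same obstruction. The meridian $\mu$ is central in $G$, so a map $S^2\to X$ transverse to a fiber with local intersection signs $\varepsilon_j$ gives $\mu^{\sum_j\varepsilon_j}=1$ in $G$. Infinite order of $\omega(f)$ then forces degree $0$ over the base, while every multisection component has positive degree.
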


A smoothly embedded surface $S$ in the smooth, oriented $4$--manifold $X$ is a \emph{section} of the Lefschetz fibration $f\colon X\to S^2$ if it intersects every fiber transversely at one smooth point. We call $S$ a \emph{pseudosection}, or a \emph{homological section}, if it intersects every fiber algebraically once instead. More generally, a \emph{$d$--section}, or \emph{multisection of degree $d$}, is an embedded surface $S\subset X$ such that $f|_S\colon S\to S^2$ is a degree--$d$ simple branched cover with only positive branch points; when branch points of either sign are allowed, we call it an \emph{achiral $d$--section}. Multisections appear naturally in relation to relative Hilbert schemes and Seiberg--Witten theory \cite{donaldson-smith:pencils, Perutz:matching-2, baykur-hayano:multisections}. Thus Theorem~\ref{thm:a} says that the obstruction to the existence of sections in our examples is not homological, whereas Theorem~\ref{thm:b} below shows that the situation is dramatically different for multisections. 

By convention, Lefschetz fibrations in this paper have non-empty critical locus and only positive nodes. This excludes surface bundles and achiral Lefschetz fibrations, where different phenomena occur. Indeed, over $S^2$ there are genus--$1$ surface bundles \cite[Lemma~10]{baykur-kamada} and achiral Lefschetz fibrations of arbitrarily high genus \cite{matsumoto:genus1, gompf:sections} that do not even admit homological sections.

Our next result answers the corresponding multisection variant of Problem~2.22 in \cite{K3}, building on~\cite{Perutz:matching-2}.

\smallskip
\begin{thmb}\label{thm:b}
Let $f\colon X\to S^2$ be a Lefschetz fibration, and let $m(f)$ denote the divisibility of the fiber class. Then, for every $d\geq 2$, the fibration $f$ admits a $d$--section if and only if $m(f)$ divides $d$. In particular, every Lefschetz fibration over the $2$--sphere admits multisections.
\end{thmb}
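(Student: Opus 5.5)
The necessity of the divisibility condition is homological, and I will prove sufficiency in two stages: first show that $f$ has an $m(f)$--section, then produce every multiple $d = k\,m(f)$ by stabilization.

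\emph{Necessity.} If $S$ is a $d$--section, then $S\cdot F = d$, because every branch point is positive and $f|_S$ has degree $d$. Write $[F] = m(f)\,A$ for some class $A$. Then $d = m(f)\,(S\cdot A)$, so $m(f)$ divides $d$.

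\emph{Sufficiency.} I will build multisections from vanishing cycles, following Perutz's matching-path construction~\cite{Perutz:matching-2}.

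\begin{enumerate}
\item \textbf{Pseudo-multisection from homology.} Since $[F]$ has divisibility $m(f)$ and $H_2(X)$ is unimodular modulo torsion, Poincar\'e duality gives a class $\Sigma \in H_2(X)$ with $\Sigma\cdot F = m(f)$.
\item \textbf{Representing $\Sigma$ by a surface transverse to $f$.} Take a generic representative of $\Sigma$. After isotopy it is disjoint from the critical points and is a branched cover of $S^2$ of degree $m(f)$, but its branch points may have either sign and it may meet fibers with both signs.
\item \textbf{Removing negative data.} Negative intersections with fibers can be cancelled in pairs by tubing; this raises the genus but keeps the surface embedded. This yields an achiral $m(f)$--section. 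The remaining task is to convert each negative branch point into positive ones.
\item \textbf{Using the critical points to eliminate negative branch points.} Near a Lefschetz critical point, the local model $z_1^2 + z_2^2$ contains a Lefschetz thimble. A local surface made of two sheets meeting the vanishing cycle can be resolved so as to trade a negative branch point for positive branch points, at the cost of adding sheets. To keep the degree fixed, I will instead use the monodromy relation, i.e.\ the fact that the vanishing cycles $c_i$ satisfy $\prod t_{c_i} = 1$, to slide the negative branch point around and cancel it against a positive one created by a Hurwitz-type move. I expect this step to be the main obstacle. My first attempt will be to reduce it to Perutz's existence result for matching cycles on the relative Hilbert scheme; if that fails, I will do it by hand, using the handle decomposition given by the monodromy factorization.
\item \textbf{All multiples.} Given an $m(f)$--section $S$, take $k$ parallel copies of $S$ pushed off along a section of its normal bundle. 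They intersect in $k(k-1)S\cdot S/2$ points, counted with sign. Each positive transverse double point can be smoothed so as to create positive branch points of $f$ restricted to the surface. Negative double points can first be traded for positive ones by adding fiber tubes, namely by adding copies of $F$, which changes $S\cdot S$. The result is a $k\,m(f)$--section for every $k\ge 1$.
\end{enumerate}

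The final statement of Theorem~\ref{thm:b} then follows immediately, since $m(f) \mid m(f)$ means every fibration has an $m(f)$--section.
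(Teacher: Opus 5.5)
Your necessity argument is fine. The sufficiency strategy, however, cannot work as designed, because it passes through an $m(f)$--section, and when $m(f)=1$ that is an honest section. The fibrations of Theorem~A have $\overline\omega(f)=0$, hence $m(f)=1$, yet admit no sections. So Steps~1--4 would prove something false, and your closing deduction (``every fibration has an $m(f)$--section'') is wrong; one should instead take a multiple of $m(f)$ that is at least $2$.

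The missing idea is where $d\geq 2$ enters. For $d\geq 2$ one has $\pi_1(\operatorname{Sym}^d\Sigma)\cong H_1(\Sigma;\Z)$. Hence sections of the relative Hilbert scheme $X^{[d]}\to S^2$ over the complement of the singular fibers realize any relative class of fiber degree $d$. They can also be matched on the boundary circles with local sections near the nodal fibers using homological data alone. This requires first adding multiples of reducible-fiber components to $A$ so that it pairs nonnegatively with every fiber component. For $d=1$ the same matching problem takes place in the nonabelian group $\pi_1(\Sigma)$. There the obstruction $\omega(f)$, which lives in a quotient of $\pi_1(\Sigma)$, can be nontrivial even when its abelianization vanishes.

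Step~2 is also false: a generic representative of a homology class does not restrict to a branched covering of $S^2$. Generically $f|_S$ has fold curves and cusps, and tubing away ``negative intersections'' does not remove folds. Producing a branched cover at all is the heart of the matter. In the paper it comes from a section of $X^{[d]}$ made transverse to the diagonal: pulling back the universal family gives an embedded surface whose projection is a simple branched cover with local models $z=w^2$ and $z=\bar w^2$.

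Step~4 then needs an actual mechanism. The one that works is purely local and uses neither the critical points nor the relation $\prod t_{c_i}=1$. Near a negative branch point the boundary braid is $\tau_a^{-1}\in B_d(\Sigma)$, and $\tau_a^{-1}$ is a product of positive half twists. To realize this, take the graph of a connected simple branched cover $\widehat\Sigma\to S^2$ of degree $d$ with $\widehat\Sigma\cong\Sigma$ (it has $2g+2d-2$ branch points). Delete a disk about one branch value and splice the rest in. This removes the negative branch point, adds only positive ones, and preserves the degree; it again uses $d\geq 2$.

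Since this handles every $d\geq 2$ divisible by $m(f)$ directly, your stabilization Step~5 is unnecessary. As written it is also incomplete: resolving $S\cup F$ leaves a piece lying along a fiber, over which $f$ is not a branched covering without further modification.
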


Our examples are built from explicit positive factorizations in mapping class groups and the breeding technique introduced in \cite{baykur-genus3, hamada:MCKsections, baykur-hamada:Lefschetz}. Two atomic examples are a new genus--$7$ Lefschetz fibration, whose construction generalizes to any odd genus $g\geq 7$, and a genus--$9$ Lefschetz fibration we built earlier in \cite{baykur-hamada:Lefschetz}. Neither admits a section. Both have signature zero, and the genus--$7$ construction and its generalizations provide building blocks that allow us to improve and generalize some of the results in \cite{baykur-hamada:Lefschetz}. Our new examples involve a variation of the breeding technique (see Section~\ref{subsec:genus7}), which may be of independent interest. 

\smallskip
\begin{thmc}\label{thm:c}
There exist infinitely many symplectic Lefschetz fibrations over the $2$--sphere, with any prescribed signature and odd fiber genus $g\geq 7$.
\end{thmc}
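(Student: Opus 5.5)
The plan is to combine the signature-zero building blocks with fiber sums and with known fibrations of nonzero signature. For each odd $g\geq 7$, I will first build a genus--$g$ Lefschetz fibration $f_0$ with $\sigma(f_0)=0$. Its monodromy will be an explicit positive factorization $W_g$ in $\M(\Sigma_g)$, produced by the variant of the breeding technique announced for Section~\ref{subsec:genus7}. That is, I take lower-genus relations (lantern or chain substitutions applied to known factorizations), embed them in $\Sigma_g$, and conjugate them so that they match. The signature will be computed with Endo's local signature formula for hyperelliptic-type pieces, or with Ozbagci's algorithm applied to the vanishing cycles, or by the additivity of Meyer's signature cocycle over the bred pieces. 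This explicit verification that $\sigma=0$ for all odd $g\geq 7$ is the main obstacle. The first thing I will try is to make each breeding step a cancellation: a lantern substitution raises $\sigma$ by $1$, and it will be paired against pieces of known negative signature so that the sum vanishes uniformly in $g$.

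Next I need fibrations of both signs. A genus--$g$ fibration with negative signature is easy to obtain. For example, the hyperelliptic relation $(t_{c_1}\cdots t_{c_{2g+1}})^{2g+2}=1$ gives a fibration on a blow-up of a rational or ruled surface, with $\sigma<0$. Applying lantern substitutions to the hyperelliptic relation, as in \cite{baykur-hamada:Lefschetz}, then raises the signature by $1$ each time. Twisted fiber sums of $f_0$ with such pieces yield positive-signature examples. Concretely, I will show that $W_g$ contains enough disjoint configurations of the lantern type that inverse substitutions (or substitutions) shift $\sigma$ by $\pm1$ while staying positive. This gives genus--$g$ fibrations $f_{\pm}$ with $\sigma(f_{\pm})=\pm1$.

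Then I use Novikov additivity for fiber sums, $\sigma(f\#_F f')=\sigma(f)+\sigma(f')$. For a prescribed $s\in\Z$, I take $|s|$ copies of $f_{\operatorname{sign}(s)}$ fiber-summed with $n$ copies of $f_0$, for arbitrary $n\geq1$. These have signature $s$ and pairwise distinct Euler characteristics, so they are infinitely many pairwise non-homeomorphic total spaces. They are symplectic by Gompf's theorem because $g\geq2$. If $s=0$, the sums $f_0^{\#n}$ already suffice.
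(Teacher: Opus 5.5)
Your overall scheme matches the paper's: a signature-zero block, a block of signature $+1$, a block of negative signature, additivity of $\sigma$ under fiber sums, and distinct Euler characteristics to get infinitely many examples. The signature-zero factorizations $W_g$ should simply be quoted from Section~\ref{sec:examples}. Their signature comes from the Endo--Nagami count $4h(-7)+28h=0$; Endo's local signature formula for hyperelliptic fibrations does not apply to them.

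\textbf{The gap is the construction of $f_+$.} Your concrete plan rests on the claim that $W_g$ itself contains, after Hurwitz moves, the four boundary twists of a lantern (or its three interior twists). Nothing in the construction of $W_g$ provides this, and you give no argument for it. The alternative you mention does not help. Signature is additive under twisted fiber sums as well, so summing with the signature-zero $f_0$ leaves the signature of the lantern-substituted hyperelliptic piece unchanged. You give no reason why that piece, starting from $-4(g+1)$, could reach positive signature.

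\textbf{The missing idea:} you do not need a lantern inside $W_g$; you can create one by a twisted fiber sum of four copies.
\begin{enumerate}
\item After a cyclic permutation, write $W_g=t_aP_g$ with $a$ a nonseparating vanishing cycle.
\item Choose diffeomorphisms $\phi_i$ with $\phi_i(a)=a_i$, where $a_1,\dots,a_4$ are nonseparating curves bounding an embedded four-holed sphere with connected complement. This is possible since all nonseparating curves are equivalent.
\item Each conjugate $t_{a_i}P_g^{\phi_i}$ is a positive factorization of the identity. Identity blocks can be moved past any twist by Hurwitz moves without changing curves, so the product of the four conjugates can be nested into $t_{a_1}t_{a_2}t_{a_3}t_{a_4}Q_g=1$.
\item The lantern substitution then gives $t_xt_yt_zQ_g=1$, of signature $+1$. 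The curves $x,y,z$ are nonseparating because the complement is connected.
\end{enumerate}

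With this block you also do not need a fibration of signature exactly $-1$. Fiber sum $n$ copies of the signature-$(+1)$ fibration with $r\geq1$ copies of a fibration of signature $-s_g<0$, for example the hyperelliptic one on $\CP\#(4g+5)\CPb$, choosing $n=k+rs_g\geq0$. Then add arbitrarily many copies of $f_0$ to get infinitely many examples of signature $k$.
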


In Section~\ref{sec:prelim}, we associate to a monodromy factorization
an obstruction $\omega(f)\in G$, where $G$ is the fundamental group of
the complement of a regular fiber. It is obtained from lifts fixing a
point in the reference fiber and is independent of the auxiliary
choices, under the corresponding identifications of $G$. A section can
exist only if $\omega(f)=1$. Geometrically, this class records the
attaching circle of the last $2$--handle and helps us recover complete
Kirby diagrams even when the existence of a section is unknown.

For Theorem~\ref{thm:a}, we compute this obstruction for the two explicit
examples in Section~\ref{sec:examples}. In both cases it is nontrivial
in $G$, although its abelianization vanishes. For the genus--$9$
example, its image has infinite order in a class--$2$ nilpotent
quotient; for the genus--$7$ example, its image has order $2$ in the
integral class--$5$ nilpotent quotient. The GAP verifications are given
in Appendices~\ref{app:gap} and~\ref{app:genus7-gap}, respectively.
These calculations also rule out sections for every untwisted self
fiber sum of the genus--$9$ example and for the untwisted fiber sum of
any odd number of copies of the genus--$7$ example. The vanishing of
the abelianized obstructions accounts for the existence of
pseudosections. The order of $\omega(f)$ also determines the degrees
of spherical classes over the base. In genus $9$, its infinite order
rules out any spherical components for multisections and Corollary~\ref{cor:genus9-aspherical}
further gives compatible symplectic forms for which there are no
symplectic spheres. We expect that similar methods will show that the
higher-genus examples in Section~\ref{subsec:higher-genera} also admit
no sections.

In contrast, the existence of multisections is governed by the
abelianized obstruction. Its order determines the divisibility of the
fiber class and hence the possible degrees of multisections. Combined
with the results on relative Hilbert schemes in
\cite{Perutz:matching-2}, this gives the general existence result in Theorem~\ref{thm:b}. 

Alongside these nonexistence and existence results, we aim to provide a
practical guide to finding pseudosections, sections, and multisections
from monodromy factorizations in Subsection~\ref{subsec:finding-sections}.
We illustrate these procedures in relatively challenging examples. The genus--$7$
calculation in Subsection~\ref{subsec:genus7-pi1} shows how to track the
point-pushing loop through a breeding construction to get a pseudosection. In Appendix~\ref{app:xiao-section}, we show how to find four disjiont sections of Xiao's holomorphic genus--$2$ Lefschetz
fibration of type $(6,7)$, using the monodromy calculation of Szab\'{o} in \cite{szabo:xiao-monodromy}.

\smallskip
\noindent \textit{Conventions.}
We denote by $\Sigma_{g,k}^b$ a compact orientable surface of genus $g$ with $b$ boundary components and $k$ marked points in its interior, suppressing $b$ or $k$ from the notation when they are zero. The mapping class group $\M(\Sigma_{g,k}^b)$ consists of orientation-preserving diffeomorphisms of $\Sigma_{g,k}^b$ fixing the boundary and marked points pointwise, modulo isotopies of the same type. A right-handed Dehn twist about a simple closed curve $c$ is denoted by $t_c$, and our products of mapping classes act on curves from right to left. We write $W^\varphi=\varphi W\varphi^{-1}$ and $[x,y]=xyx^{-1}y^{-1}$. We write a monodromy factorization of a genus--$g$ Lefschetz fibration as $t_{c_1}\cdots t_{c_n}=1$ in $\M(\Sigma_g)$, and refer to such factorizations as positive factorizations.
For based loops, $\alpha\beta$ means that $\alpha$ is traversed first.
The point-pushing map $\mathcal P_\gamma$ is obtained by pushing the
marked point along $\gamma$; thus
$\mathcal P_\alpha\mathcal P_\beta=\mathcal P_{\beta\alpha}$.
For a simple oriented based loop $\gamma$, we use
$\mathcal P_\gamma=t_{\gamma_L}^{-1}t_{\gamma_R}$, where
$\gamma_L,\gamma_R$ are its left and right parallel copies.

A Lefschetz fibration in this paper means a relatively minimal map from a closed, connected, smooth, oriented $4$--manifold to $S^2$, with connected regular fibers, distinct critical values, non-empty critical locus, and only positive Lefschetz singularities. Here relatively minimal means that no fiber contains an embedded sphere of self-intersection $-1$. For background on Lefschetz fibrations, mapping class groups, and monodromy factorizations, see \cite{gompf-stipsicz:book, baykur-hamada:Lefschetz}.

\smallskip
\noindent \textit{Acknowledgements.}
The first author was partially supported by NSF grants DMS-2005327 and DMS-2506431, and by a Simons Foundation Travel Grant. The second author was partially supported by JSPS KAKENHI Grant Numbers 25K24558, 25K06980, and 25K15247. We are grateful to Tim Perutz for suggesting to the first author, some years ago, the use of relative Hilbert schemes and symmetric products to approach the multisection question---any errors or shortcomings in these arguments are, of course, our own. We thank Seraphina Lee, Carlos Andres Marcelo Serv\'an, and Riccardo Pedrotti for their interest in this work and for their questions, which helped shape the final exposition. We also thank Zolt\'{a}n Szab\'{o} for several helpful conversations on Xiao's $(6,7)$ fibration discussed in Appendix~\ref{app:xiao-section}.

%We thank our colleagues for their interest and patience since some of these results were first announced.

The constructions, figures, and fundamental group calculations in all of this article but Appendix~B were worked out the old-fashioned way---by hand. AI tools were used to assist with checking the consistency of the calculational inputs, preparing verification codes for GAP, and in final revision of the manuscript. In the later amended Appendix~\ref{app:xiao-section}, the curve configuration was checked and the compatible figures were generated using AI tools. The quotient group computations themselves were carried out in GAP.

\section{Sections, pseudosections, and multisections}  \label{sec:prelim}
%======================================================

This section develops the basic tools used throughout the paper. We first associate to a monodromy factorization a point-pushing obstruction to the existence of a section, then explain how this obstruction appears in the handle decomposition, and finally relate its abelianization to multisection degrees and its order to spherical classes.

\subsection{Monodromy lifts}
\label{subsec:pointpush}

For the point-pushing description in this subsection, assume that
$g\geq2$. Let
\[
t_{c_1}\cdots t_{c_n}=1
\]
be a positive factorization in $\M(\Sigma_g)$. Fix a marked point
$p\in\Sigma_g$, and choose representatives
of the curves $c_i$ disjoint from $p$. The corresponding Dehn twists
define lifts $\widetilde t_{c_i}\in\M(\Sigma_{g,1})$. By the Birman
exact sequence, their product is a point-pushing map:
\[
\widetilde t_{c_1}\cdots\widetilde t_{c_n} =
\mathcal P_{\gamma}
\]
for a unique $\gamma\in F:=\pi_1(\Sigma_g,p)$.

Set
\[
N=\langle\!\langle c_1,\ldots,c_n\rangle\!\rangle_F,
\]
where the vanishing cycles are based at $p$ by arbitrary paths. The
normal closure $N$ is independent of these choices.

\begin{lem}\label{lem:pointpush-class}
The image of $\gamma$ in $F/N$ is independent of the pointed
representatives of the vanishing cycles and is invariant under Hurwitz
moves.
\end{lem}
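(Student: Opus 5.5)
The plan is to reduce everything to one local computation: how the lift $\widetilde t_{c}\in\M(\Sigma_{g,1})$ changes when the representative of $c$ is isotoped across the marked point $p$. Suppose a representative $c$ is swept across $p$, so the two representatives before and after the crossing cobound an annulus containing $p$. Regard the loop $\delta$ obtained by pushing $c$ slightly so that it passes through $p$ as a based loop. Then the two representatives are the left and right parallel copies $\delta_L,\delta_R$. By the convention $\mathcal P_\delta=t_{\delta_L}^{-1}t_{\delta_R}$ of the Conventions we get
\[
t_{\delta_R}=t_{\delta_L}\,\mathcal P_\delta .
\]
So the two lifts differ by right multiplication by $\mathcal P_\delta$, with $\delta$ freely homotopic to $c$ and hence $[\delta]\in N$.

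Any two representatives of $c_i$ disjoint from $p$ are isotopic in $\Sigma_g$. A generic isotopy passes through $p$ finitely many times, and between crossings it is an isotopy in $\Sigma_g\setminus\{p\}$, which does not change the lift. It therefore suffices to treat a single crossing. Replacing $\widetilde t_{c_i}$ by $\widetilde t_{c_i}\mathcal P_\delta$ changes the product to
\[
\widetilde t_{c_1}\cdots\widetilde t_{c_i}\,\mathcal P_\delta\,\widetilde t_{c_{i+1}}\cdots\widetilde t_{c_n}
=\mathcal P_\gamma\,\psi^{-1}\mathcal P_\delta\psi,\qquad \psi=\widetilde t_{c_{i+1}}\cdots\widetilde t_{c_n}.
\]
Conjugating a point-pushing map gives $\psi^{-1}\mathcal P_\delta\psi=\mathcal P_{\psi_*^{\pm1}(\delta)}$, where $\psi_*$ is the action on $F=\pi_1(\Sigma_g,p)$ and the sign depends on conventions. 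Using $\mathcal P_\alpha\mathcal P_\beta=\mathcal P_{\beta\alpha}$ and uniqueness of $\gamma$ in the Birman sequence (since $g\ge 2$), the new class is $\gamma' = \psi_*^{\pm1}(\delta)\,\gamma$.

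The key algebraic input is that each $\widetilde t_{c_j}$ preserves $N$ and acts trivially on $F/N$. By the standard formula, a Dehn twist about a curve disjoint from $p$ sends $x\in F$ to $x$ multiplied by a product of conjugates of $c_j^{\pm1}$ (one factor for each intersection point). Hence $t_{c_j*}(x)\equiv x \bmod N$, and $N$ is invariant because it is generated by conjugates of the $c_k$. Consequently $\psi_*^{\pm1}(\delta)\in N$, and $\gamma'\equiv\gamma$ in $F/N$. The same computation shows that changing the basing paths of the $c_i$ does not change $N$, as already noted.

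For Hurwitz moves, replace $t_{c_i}t_{c_{i+1}}$ by $t_{c_{i+1}}\,t_{t_{c_{i+1}}^{-1}(c_i)}$ (or the inverse move). First check that $N$ is unchanged: $t_{c_{i+1}}^{-1}(c_i)$ is congruent to a conjugate of $c_i$ modulo conjugates of $c_{i+1}$, and symmetrically $c_i$ is recovered from it. Next, choose as representative of the new curve the image of the chosen representative of $c_i$ under $\widetilde t_{c_{i+1}}^{-1}$, which is still disjoint from $p$. Its lift is exactly $\widetilde t_{c_{i+1}}^{-1}\widetilde t_{c_i}\widetilde t_{c_{i+1}}$. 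The lifted product is then literally unchanged, so $\gamma$ is unchanged. By the first part, any other choice of representatives gives the same class in $F/N$.

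The main obstacle is bookkeeping of conventions rather than ideas. One must pin down the sign and direction in the conjugation formula for $\mathcal P_\delta$, the order in $\mathcal P_\alpha\mathcal P_\beta=\mathcal P_{\beta\alpha}$, and which side the extra point push appears on. I would handle this by noting that every possible convention yields $\gamma'$ equal to $\gamma$ times (on the left or right) an element of the form $\phi_*(\delta)^{\pm1}$ with $\phi$ a product of lifted twists. Since such elements lie in the $\M(\Sigma_{g,1})$-invariant normal subgroup $N$, on which the twists act trivially modulo $N$, the conclusion is insensitive to these choices.
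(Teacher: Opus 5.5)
Your proposal is correct and follows essentially the same route as the paper. A crossing of $p$ during a generic isotopy changes a lift only by a point push along a based copy of $c_i^{\pm1}$, which lies in $N$; the twists act trivially on $F/N$, so the transported contributions stay in $N$; and a Hurwitz move performed on the pointed lifts leaves the lifted product unchanged while preserving the normal closure. Your version is only slightly more explicit about where the extra push sits in the product and why $N$ is twist-invariant.
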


\begin{proof}
Suppose that $\widehat c_i$ is another pointed representative of the
same unmarked vanishing cycle $c_i$. Choose an isotopy from $c_i$ to
$\widehat c_i$ that passes through $p$ transversely at finitely many
times. Between these crossings, the corresponding twists agree in
$\M(\Sigma_{g,1})$. At each crossing, the curves immediately before
and after are isotopic, away from $p$, to the boundary curves of an
annulus containing $p$. Their twists therefore differ by a point push
along a based copy of $c_i$ or $c_i^{-1}$. Such a loop is a conjugate
of $c_i^{\pm1}$ and lies in $\langle\!\langle c_i\rangle\!\rangle_F$.
This also covers the separating case in which the marked point
changes sides. Consequently,
\[
t_{\widehat c_i}\,\widetilde t_{c_i}^{-1}
\in\mathcal P\bigl(\langle\!\langle c_i\rangle\!\rangle_F\bigr)
\subseteq\mathcal P(N).
\]
Moreover, each $t_{c_j*}$ acts trivially on $F/N$, so it preserves $N$.
Changing the pointed representatives one at a time therefore changes
the total product only by elements of $\mathcal P(N)$ and their
conjugates by the preceding twists. It follows that the resulting
point-pushing loop has the same image in $F/N$.

A Hurwitz move performed on the pointed lifts leaves the total product
unchanged. It also replaces the vanishing cycles by another collection
with the same normal closure. Hence the quotient class of $\gamma$ is
Hurwitz-invariant.
\end{proof}

We denote this class by
\[
\omega(f):=[\gamma]\in F/N
\]
and call it the \emph{point-pushing obstruction} associated to the
factorization. A change of the marked point or the identification of
the reference fiber transports both $F/N$ and $\omega(f)$ by the
corresponding natural isomorphism, possibly changing a representative
by conjugation. Similarly, simultaneous conjugation of the
factorization transports $\omega(f)$ by the induced isomorphism. Thus
its triviality and its order are invariants of the Lefschetz fibration.

\subsection{The last $2$--handle and the point-pushing obstruction}
\label{subsec:last2handle}
% ================================================

Choose a disk $D\subset S^2$ containing all the critical values, and let
\[
X^\circ=f^{-1}(D):=
X\setminus\operatorname{int}\nu(F_0),
\]
where $F_0$ is a regular fiber over the complementary disk. The standard
handle decomposition of $X^\circ$ is obtained from
$\Sigma_g\times D^2$ by attaching Lefschetz $2$--handles along the
vanishing cycles. Consequently,
\[
G:=\pi_1(X^\circ)\cong F/N.
\]

A meridian of $F_0$ determines a conjugacy class in $G$, up to
inversion if its orientation is not specified. The proposition below
identifies this class, up to inversion, with $\omega(f)$ when $g\geq2$.
In genus $1$ we use $\omega(f)$ for either oriented meridian class.
In every genus, write
\[
\overline\omega(f)\in H_1(G;\Z)
\]
for its abelianization. Its vanishing and order do not depend on the
choice of orientation.

To recover $X$, one glues back the trivial fibration over the
complementary disk. Relative to its boundary, this trivial fibration
contributes one $2$--handle, followed by $2g$ $3$--handles and one
$4$--handle. The only additional handle affecting the fundamental
group is therefore the last $2$--handle.

\begin{prop}\label{prop:pointpush-obstruction}
The attaching circle of the last $2$--handle is a meridian of $F_0$.
When $g\geq2$, under the identification $G\cong F/N$ and after
orienting the attaching circle appropriately, it represents the
point-pushing obstruction
\[
\omega(f)=[\gamma].
\]
In particular,
\[
\pi_1(X)\cong G/\langle\!\langle\omega(f)\rangle\!\rangle,
\]
and, when $g\geq2$,
\[
\pi_1(X)
\cong
F/\langle\!\langle c_1,\ldots,c_n,\gamma\rangle\!\rangle_F.
\]
Moreover, $\omega(f)=1$ is a necessary condition for $f$ to admit a
section. On the other hand, when $g\geq2$, if $\gamma=1$ in $F$, then
the marked factorization determines a section.
\end{prop}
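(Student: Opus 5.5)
The argument is organized around the fibration structure on $\partial X^\circ$. Over $\partial D$ the boundary $\partial X^\circ$ is the mapping torus of the total monodromy, which is the identity, so $\partial X^\circ\cong\Sigma_g\times S^1$. The trivial piece $\Sigma_g\times D^2$ is glued back so that its $2$--handle is $\{q\}\times D^2$ for a point $q\in\Sigma_g$. This $2$--handle is therefore attached along a circle $\{q\}\times\partial D$, which is a meridian of $F_0$. That is the first claim.

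The main step is to identify the homotopy class of this meridian in $G$. I would take the marked point $p=q$ and use the pointed representatives of the vanishing cycles, all disjoint from $p$. Then the parallel transport in $X^\circ$ along $\partial D$ is given by the lifted monodromy, namely the product $\widetilde t_{c_1}\cdots\widetilde t_{c_n}=\mathcal P_\gamma$ in $\M(\Sigma_{g,1})$. To exploit this, I would trivialize the fibration over $D$ minus small disks around the critical values by a horizontal distribution, chosen so that near each Lefschetz handle the flow fixes a neighborhood of $p$ (the handles are attached away from $p$).

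With this trivialization, the horizontal lift of $\partial D$ starting at $p$ is a closed loop $\sigma$, which is a local section over $\partial D$. The framing coming from the trivial fibration over the complementary disk differs from the lifted framing by the point push $\mathcal P_\gamma$. Hence the attaching circle $\{p\}\times\partial D$ in the product framing over $\partial D$ is homotopic in $\partial X^\circ$ to $\sigma$ concatenated with $\gamma^{\pm1}$, where $\gamma^{\pm 1}$ is read in the fiber over the basepoint of $\partial D$. The loop $\sigma$ bounds a disk in $X^\circ$: push it in along a radial contraction of $D$ through the part of the fibration fixing a neighborhood of $p$, i.e.\ it is the boundary of a section of $X^\circ$ over $D$. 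So the attaching circle equals $\gamma^{\pm1}$ in $\pi_1(X^\circ)=F/N$. Carrying this out carefully, with attention to orientation and basepoint conventions, is the delicate part. I expect the main obstacle to be making precise that ``the framing over $\partial D$ differs by $\mathcal P_\gamma$'' translates to ``the meridian equals $\gamma$.'' I would handle it by the standard fact that in a mapping torus $\Sigma\times[0,1]/(x,1)\sim(\phi(x),0)$ with $\phi=\mathcal P_\gamma$ isotopic to the identity, the isotopy from $\phi$ to $\mathrm{id}$ traces $p$ around $\gamma$. Together with Lemma~\ref{lem:pointpush-class}, this gives independence from choices.

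The remaining statements then follow. The $3$-- and $4$--handles do not change $\pi_1$, so van Kampen gives $\pi_1(X)\cong G/\langle\!\langle\omega(f)\rangle\!\rangle$. Substituting $G\cong F/N$ gives the presentation $F/\langle\!\langle c_1,\ldots,c_n,\gamma\rangle\!\rangle_F$. For necessity, suppose $S$ is a section. Then $S\cap X^\circ$ is a disk bounded by a section curve over $\partial D$, which I may take to be a meridian of $F_0$. Hence that meridian is trivial in $G$, and since all meridians are conjugate up to inversion, $\omega(f)=1$. Conversely, if $\gamma=1$ in $F$, then the lifted product is trivial in $\M(\Sigma_{g,1})$. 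The marked point then defines a section of $X^\circ$ whose monodromy over $\partial D$ is the identity near $p$, so it glues to $\{p\}\times D^2$ in the trivial piece, giving a closed section.
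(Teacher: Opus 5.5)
Your proposal is correct and follows the same route as the paper: the last $2$--handle is attached along $\{p\}\times\partial D$, the pointed monodromy $\widetilde t_{c_1}\cdots\widetilde t_{c_n}=\mathcal P_\gamma$ identifies this meridian with $\gamma^{\pm1}$, and the $3$-- and $4$--handles leave $\pi_1$ unchanged. As in the paper, a section restricts to a disk in $X^\circ$ bounded by a meridian, and $\gamma=1$ makes the pointed lifts multiply to the identity. Your horizontal-lift argument (the closed lift $\sigma$ bounding the pointed section disk over $D$, and the isotopy from $\mathcal P_\gamma$ to the identity dragging $p$ around $\gamma$) spells out the paper's one-line identification; the traced loop is well defined because that isotopy is unique up to homotopy, since $\operatorname{Diff}_0(\Sigma_g)$ is contractible for $g\geq2$.
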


\begin{proof}
The $2$--handle in the relative handle decomposition of
$\Sigma_g\times D^2$ is attached along the loop
$\{p\}\times\partial D^2$. Under the boundary identification determined
by the monodromy, this loop is a meridian of the removed fiber. When
$g\geq2$, the pointed monodromy identifies it with a loop whose fiber
component is $\gamma$. Thus, up to orientation, its class in
$\pi_1(X^\circ)\cong F/N$ is represented by $\gamma$. Attaching this
handle kills the meridian, while the subsequent $3$-- and $4$--handles
do not affect the fundamental group.

If $f$ admits a section, then removing its intersection with
$\nu(F_0)$ leaves a properly embedded disk in $X^\circ$. The boundary
of this disk is a meridian of $F_0$. Meridians at
different points of the connected fiber are conjugate in
$\pi_1(X^\circ)$, so the meridian class is already trivial in $G$.
Hence $\omega(f)=1$.

Finally, if $g\geq2$ and $\gamma=1$ in $F$, then
$\mathcal P_\gamma=1$ in $\M(\Sigma_{g,1})$. Thus the chosen lifts
multiply to the identity in $\M(\Sigma_{g,1})$, which gives a section.
\end{proof}

In fact, $\omega(f)$ lies in the center of $G$. Indeed, the fiber
subgroup of $\pi_1(\partial X^\circ)\cong\pi_1(\Sigma_g)\times\Z$
surjects onto $G$ and commutes with the meridian. Thus, once $G$ and
the orientation of the meridian are fixed, the conjugacy ambiguity
disappears.

\begin{remark}\label{rmk:hillman-pedrotti}
Hillman and Pedrotti give an explicit algebraic criterion for when a
section over the boundary mapping torus of a Lefschetz fibration over
$D^2$ extends to a continuous, respectively smooth, section over the
disk \cite[Theorem~3.19]{hillman-pedrotti}. Their condition can be
viewed as the ordered algebraic record of modifying the boundary loop
by the local vanishing-cycle terms, or geometrically by sliding across
the Lefschetz $2$--handles. Thus it retains the full ordered
handle-slide data, rather than reducing to a quotient test. In
particular, their criterion implies triviality of the corresponding
class in $F/N$, whereas our obstruction forgets this additional data.
We therefore use $\omega(f)$ only as an obstruction to sections, not as
a sufficient criterion for constructing them.
\end{remark}

\subsection{Finding sections and pseudosections}
\label{subsec:finding-sections}
%======================================================

We next spell out how the preceding obstruction can be found in
examples. The goal is to give a practical guide for detecting sections,
pseudosections, and multisections from monodromy factorizations.

\subsubsection*{Marked lifts and pseudosections.}
Suppose first that a Lefschetz fibration of genus $g\geq2$ is given by
an explicit factorization
\[
t_{c_1}\cdots t_{c_n}=1
\]
in $\M(\Sigma_g)$. Choose a marked point $p$ and choose representatives of all $c_i$ disjoint from
$p$. This gives a lifted product in $\M(\Sigma_{g,1})$, hence a
point-pushing map $\mathcal P_\gamma$. If $\gamma=1$ in
$F=\pi_1(\Sigma_g,p)$, the lift gives a section. In general, the loop
$\gamma$ records the defect of the chosen lift; its image in $G=F/N$
is the obstruction class $\omega(f)$.

To identify $\gamma$ in practice, one may choose a filling collection
of curves and arcs in $\Sigma_g\setminus\{p\}$ and compute its image
under the lifted product. Alexander's method then determines the
mapping class \cite{FarbMargalit:modbook}. Since the lifted product is
already known to lie in the point-pushing subgroup, it remains only to
recognize which point-push carries the original filling system
to its image under the lifted product. In computations, a single well-chosen arc often
reveals the point-pushing loop, but a filling curve--arc system gives a
rigorous certificate.

The point-pushing word by itself does not yet give a pseudosection. As
shown in the next subsection, if
$\overline\omega(f)=0$ in $H_1(G;\Z)$, then there is a class
$A\in H_2(X;\Z)$ with $A\cdot[F_0]=1$. Such a class has a smoothly
embedded oriented representative, after joining components by tubes if
necessary. After a small perturbation away from the critical points,
this representative is a pseudosection in our sense.

\subsubsection*{Sections from breeding.}
There is also a useful way to find sections by tracking how a positive
factorization for a Lefschetz fibration is derived from basic mapping
class group relations supported on surfaces such as $\Sigma_0^4$ for
the lantern relation and $\Sigma_1^1$ for the $2$--chain relation; see
\cite{hamada:MCKsections,baykur-hamada:Lefschetz}. This is the breeding
technique underlying many of our constructions, with the full ancestry
of the boundary components kept throughout the process. Under the
embeddings used in breeding, some boundary components of the smaller
pieces may become null-homotopic curves in the larger fiber surface;
equivalently, they are capped by disks. Before capping, these boundary
components record sections of the corresponding lifted factorization.
Boundary components that survive the construction give honest
sections. On the other hand, when a boundary twist is canceled
against a positive Dehn twist, the cancellation may hold only after
forgetting the marked point, leaving a residual point-pushing term
in the marked surface. If the corresponding abelianized obstruction
vanishes, the resulting
fibration nevertheless admits a pseudosection.

\subsubsection*{Multisections from several marked points.}
For multisections one can use the analogous marked-point strategy with
several marked points. Here one uses the mapping class group
\[
\M(\Sigma_g;\{p_1,\ldots,p_d\})
\]
of diffeomorphisms preserving the marked set, rather than fixing its
points individually. For $g\geq2$, the kernel of the forgetful map is
the full surface braid group, and positive branch points of the multisection
appear as positive half twists in this kernel. This is the point of
view used in \cite{baykur-hayano:multisections} to encode multisections
by monodromy factorizations.

The next subsection gives a complementary existence result for
multisections, using the abelianized obstruction.

\subsection{Multisections}
\label{subsec:multisections}

A \emph{$d$--section}, or \emph{multisection of degree $d$}, of a
Lefschetz fibration $f\colon X\to S^2$ is a possibly disconnected
embedded oriented surface $S\subset X$, disjoint from
$\operatorname{Crit}(f)$, such that
\[
f|_S\colon S\to S^2
\]
is a simple branched covering of degree $d$ whose branch points are all
positive \cite[Section~2.1]{baykur-hayano:multisections}. Thus, near a
branch point, there are orientation-compatible local coordinates in
which
\[
S=\{z=w^2\}
\qquad\text{and}\qquad
f(z,w)=z.
\]
These are the \emph{standard surfaces} of Donaldson--Smith
\cite[Definition~2.7]{donaldson-smith:pencils}. We use the term
\emph{achiral $d$--section} when branch points of either sign are
allowed; a negative branch point has the local model $z=\bar w^2$.
After a small perturbation, the branch values may be taken to be
distinct and disjoint from the critical values of $f$, as we do below.

We first record an elementary local modification.

\begin{lem}\label{lem:positivize-multisection}
If a Lefschetz fibration $(X,f)$ admits an achiral $d$--section
with $d\geq2$, then it admits a positive $d$--section.
\end{lem}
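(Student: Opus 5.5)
The plan is to remove the negative branch points one at a time by a local modification near each one. Since $d\geq 2$, we can trade each negative branch point for positive ones without changing the homology class of the surface. The number of sheets over a generic point stays $d$, and all changes happen away from $\operatorname{Crit}(f)$.

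\emph{Local picture.} Near a negative branch point $q$, choose coordinates as in the definition in which $f(z,w)=z$ on a small bidisk $B=D_z\times D_w$ avoiding critical points, and $S\cap B=\{z=\bar w^2\}$. Over $\partial D_z$, the restriction $S\cap f^{-1}(\partial D_z)$ is a closed $2$--strand braid in the solid torus $\partial D_z\times D_w$. A negative branch point has as boundary the closure of the negative half twist $\sigma^{-1}$. A positive branch point has boundary $\sigma$.

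\emph{Replacing the local piece.} Inside $B$, replace the disk $\{z=\bar w^2\}$ by another surface with the same boundary $\widehat{\sigma^{-1}}$ that is a $2$--fold simple branched cover of $D_z$ with only positive branch points. Since $\sigma^{-1}=\sigma\cdot\sigma^{-2}$, this requires writing $\sigma^{-1}$ as a product of conjugates of $\sigma$ in a braid group on possibly more strands. With only two strands this is impossible, because the abelianization counts branch points with sign. The fix is to use a third sheet. Since $d\geq 2$, the multisection has at least one other sheet, and by moving $q$ along $S$ we may arrange another sheet passing over $D_z$ in $B$. In $B_3$ we then use the standard fact that $\sigma_1^{-1}$ is a product of conjugates of $\sigma_1,\sigma_2$. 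For example, $\sigma_1^{-1}=(\sigma_2\sigma_1)^{-3}\sigma_2\sigma_1\sigma_2\cdot\sigma_1\sigma_2\sigma_1\cdots$, using that the full twist $\Delta^2=(\sigma_1\sigma_2)^3$ is a positive word, so $\sigma_1^{-1}=\Delta^{-2}\cdot(\text{positive})$. Here $\Delta^{-2}$ is central.

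\emph{Main obstacle: the residual full twist.} Discarding the central factor $\Delta^{-2}$ changes the framing data, so I do not expect to dispose of it for free. I would instead realize it globally. A negative full twist of three sheets over a small disk can be absorbed by a fiberwise isotopy of the sheets: rotating the $w$--coordinate of the sheets by $e^{-i\theta}$ as $z$ goes around $\partial D_z$ inside $B$ is an isotopy of the surface rel the fibration. If that absorption does not work cleanly, the fallback is to pair each negative branch point with a cancelling pair of branch points of opposite sign, created by a finger move in the $w$--direction. Then the positive member of that pair is combined with the original negative one via the $3$--strand relation above, and the count is rebalanced using Riemann--Hurwitz.

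\emph{Checking the result.} After the local surgery, one must verify that the new surface is embedded and smooth, that it is disjoint from $\operatorname{Crit}(f)$, and that $f$ restricted to it is simple branched of degree $d$. The surgery leaves the homology class unchanged, so it is still a $d$--section, now with all branch points positive.
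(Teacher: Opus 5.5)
There is a genuine gap. Your surgery is supported in a bidisk $B=D_z\times D_w$, and no replacement inside such a bidisk can remove a negative branch point. Take a braided surface in $D_z\times D_w$ bounded by the closure of $\beta\in B_n$ (braids in the disk $D_w$). Its signed count $n_+-n_-$ of branch points equals the exponent sum of $\beta$, because every conjugate of $\sigma_i^{\pm1}$ has exponent sum $\pm1$. This is the obstruction you noticed for two strands, and adding strands does not remove it. The ``standard fact'' that $\sigma_1^{-1}$ is a product of conjugates of $\sigma_1,\sigma_2$ in $B_3$ is false, since any such product has positive exponent sum. The leftover $\Delta^{-2}$ is a nontrivial central element of $B_3$. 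The fiberwise rotation you describe is not an isotopy supported in $B$: it moves $\partial D_w$, so it amounts to a Dehn twist about $\partial D_w$ in the fiber, and the modified piece no longer glues to the rest of $S$. Your instinct to absorb the full twist globally is right, but it needs relations that hold only in the braid group of the closed fiber, and you do not supply them. The fallback fails for the same reason. A finger move adds one branch point of each sign, and cancelling the new positive point against the old negative one keeps $n_+-n_-=-1$ in the bidisk, so a negative point survives; Riemann--Hurwitz only controls $n_++n_-$. (For $d=2$ there is not even a third sheet, since both sheets meet at the branch point.)

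Your claim that the homology class is unchanged also fails in general. Along $S$, the projection $\ker df|_S\to\nu_S$ is an orientation-preserving isomorphism away from the branch points, with local index $\pm1$ at a branch point of sign $\pm$. Hence $n_+-n_-=[S]^2-\langle e(\ker df),[S]\rangle$ depends only on $[S]$. Removing the negative points in a fixed class would force $n_+'=n_+-n_-$, which is impossible when $n_->n_+$.

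The missing idea is to use the whole closed fiber. In the surface braid group $B_d(\Sigma)$, unlike the braid group of a disk, the negative half twist $\tau_a^{-1}$ is a product of positive half twists. The paper gets such an expression from a connected simple branched cover $q\colon\widehat\Sigma\to S^2$ of degree $d$ with $\widehat\Sigma\cong\Sigma$. This cover exists by the Hurwitz existence theorem, with $2h+2d-2$ branch points, where $h=g(\Sigma)$. Its configuration-space monodromy gives $\tau_a\beta=1$ with $\beta$ a product of positive half twists. The graph of $q$ over the complement of a small disk around one branch value is then a positive braided surface with boundary braid $\tau_a^{-1}$. It replaces the entire piece of the multisection in $f^{-1}(D)\cong\Sigma\times D$, not just a bidisk. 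This changes the homology class, in fact by one fiber class, as the invariant above requires. That is why the paper only asserts that the degree is preserved.
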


\begin{proof}
Let $x$ be a negative branch point. Choose a disk $D\subset S^2$
containing $f(x)$ but no other branch or critical values, and fix a
trivialization
\[
f^{-1}(D)\cong \Sigma\times D,
\]
where $\Sigma$ is a regular fiber. The intersection of the achiral
multisection with $\Sigma\times\partial D$ is a closed $d$--braid whose
class in the surface braid group $B_d(\Sigma)$ is a negative half twist
$\tau_a^{-1}$ along an arc $a$ joining the two sheets which meet at
$x$.

Let $h=g(\Sigma)$. We first choose a connected orientation-preserving
simple branched covering
\[
q\colon \widehat\Sigma\to S^2
\]
of degree $d$, where $\widehat\Sigma$ has genus $h$. To see that such a
cover exists, set
\[
r=2h+2d-2.
\]
In the symmetric group $S_d$ of degree $d$, let $\sigma_i=(i\ i+1)$. The word
\[
\sigma_1\sigma_1\,\sigma_2\sigma_2\cdots
\sigma_{d-1}\sigma_{d-1}\,
\underbrace{\sigma_1\sigma_1\cdots\sigma_1\sigma_1}_{2h\ \text{factors}}
\]
has length $r$ and product equal to the identity, and its factors
$\sigma_1,\dots,\sigma_{d-1}$ generate a transitive subgroup
of $S_d$. By the Hurwitz existence theorem
\cite{husemoller:ramified}, it is realized as the monodromy of a
connected simple branched cover of $S^2$ of degree $d$ with $r$ branch points. The
Riemann--Hurwitz formula gives
\[
\chi(\widehat\Sigma)=2d-r=2-2h,
\]
so the source has genus $h$.

The cover $q$ also gives a surface-braid monodromy. Over the complement
of the branch values, the configuration
$q^{-1}(z)\subset\widehat\Sigma$ defines a map to the unordered
configuration space of $d$ points in $\widehat\Sigma$, and hence a
homomorphism from $\pi_1(S^2 \setminus \{ \text{branch values} \} )$ to $B_d(\widehat\Sigma)$. Around each branch point the
braid monodromy is a positive half twist. Choose one branch point and a
nearby regular value $z_0$, and let $a_0$ be its vanishing arc in $\widehat\Sigma$, with endpoints in
$q^{-1}(z_0)$ and interior disjoint from this set. After identifying $\widehat\Sigma$ with the fiber
$\Sigma$, we may arrange that $q^{-1}(z_0)$ is the given configuration
of $d$ points and that $a_0$ is identified with the prescribed arc
$a$. Thus the local braid monodromy at the distinguished branch point
is $\tau_a$.

Choose a Hurwitz system in which the distinguished branch point comes
first. If $\beta$ is the product of the positive half twists about all
the remaining branch points, then the total braid monodromy on $S^2$
gives
\[
\tau_a\beta=1,
\qquad\text{so}\qquad
\beta=\tau_a^{-1}.
\]
The graph
\[
\Gamma_q=\{(y,q(y))\mid y\in\widehat\Sigma\}
\subset \widehat\Sigma\times S^2
\]
is an embedded positive $d$--section of the trivial fibration $\widehat\Sigma \times S^2 \to S^2$. Remove
a small disk $D_0$ around the distinguished branch value. After the
above identification $\widehat\Sigma\cong\Sigma$, the boundary braid of
\[
\Gamma_q\cap
\bigl(\Sigma\times(S^2\setminus\operatorname{int}D_0)\bigr)
\]
is $\tau_a^{-1}$. It therefore agrees, up to braid isotopy, with the
boundary braid of the original negatively branched piece over $D$.

After identifying $S^2\setminus\operatorname{int}D_0$ with $D$, making
both surfaces product on collars of their boundaries, and applying the
braid isotopy there, we replace the entire original piece over $D$ by
this positive braided surface. This removes the negative branch point
and introduces only positive branch points. Repeating the construction
at every negative branch point proves the claim.
\end{proof}

\begin{remark}
We can carry out this modification in much more general settings;
for example, over a higher genus base, in an achiral fibration, or in a
broken Lefschetz fibration. The modification may change the topology,
the number of components, and the homology class of the surface, but it
preserves its degree over the base.
\end{remark}

Retain the notation
\[
F=\pi_1(\Sigma,p),\qquad
N=\langle\!\langle c_1,\ldots,c_n\rangle\!\rangle_F,
\qquad
G=F/N\cong\pi_1(X^\circ),
\]
and let
\[
\overline\omega(f)\in H_1(G;\Z)
\]
denote the abelianized meridian class, with the orientation chosen
to agree with the point-pushing convention when $g\geq2$.

\begin{thm}\label{thm:multisection-degrees}
Let $f\colon X\to S^2$ be a Lefschetz fibration with connected regular
fiber $\Sigma$. For every integer $d\geq2$, the following conditions
are equivalent:
\begin{enumerate}[label=(\roman*)]
\item the fibration $f$ admits a $d$--section;
\item there is a class $A\in H_2(X;\Z)$ such that
$A\cdot[\Sigma]=d$;
\item
$d\,\overline\omega(f)=0$ in $H_1(G;\Z)$.
\end{enumerate}
\end{thm}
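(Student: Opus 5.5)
I will prove the cycle of implications (i)$\Rightarrow$(ii)$\Rightarrow$(iii)$\Rightarrow$(i).

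\emph{(i)$\Rightarrow$(ii).} A $d$--section $S$ is an embedded oriented surface. Since $f|_S$ is an orientation-compatible branched cover of degree $d$, and a generic fiber meets $S$ transversely and positively in $d$ points, we may take $A=[S]$.

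\emph{(ii)$\Leftrightarrow$(iii).} This is a homological computation. Decompose $X=X^\circ\cup\nu(F_0)$ with $\nu(F_0)\cong\Sigma\times D^2$. By Proposition~\ref{prop:pointpush-obstruction}, $X$ is obtained from $X^\circ$ by attaching the last $2$--handle along a meridian $\mu$ of $F_0$, followed by $3$-- and $4$--handles. Let $C$ be the core of that $2$--handle. A class $A\in H_2(X)$ with $A\cdot[\Sigma]=d$ can be represented, after making it transverse to the cocore, as a chain in $X^\circ$ plus $d$ parallel copies of $C$ (counted algebraically). Its boundary then shows that $d[\mu]=0$ in $H_1(X^\circ)=H_1(G;\Z)$. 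Conversely, suppose $d\,\overline\omega(f)=0$. Then $d\mu$ bounds a $2$--chain in $X^\circ$, and capping it with $d$ copies of $C$ gives a closed class $A$ with $A\cdot[\Sigma]=d$. Equivalently, one can use the Mayer--Vietoris/Poincar\'e--Lefschetz sequence of the pair $(X,X^\circ)$. The map $H_2(X)\to H_2(X,X^\circ)\cong H_2(D^2\times\Sigma,\partial D^2\times\Sigma)\cong\Z$ is exactly $A\mapsto A\cdot[\Sigma]$, and the connecting map sends the generator to $\overline\omega(f)$. Exactness then gives that the image is $\operatorname{ord}(\overline\omega(f))\Z$, which is the equivalence. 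The orientation of $\mu$ plays no role, since the order is unaffected.

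\emph{(ii)$\Rightarrow$(i).} This is the main step. Given $A$ with $A\cdot[\Sigma]=d\geq2$, I will first produce an \emph{achiral} $d$--section and then apply Lemma~\ref{lem:positivize-multisection}. I would follow the relative Hilbert scheme approach of Perutz \cite{Perutz:matching-2} (the approach the acknowledgments point to). Sections of the relative Hilbert scheme $X_r(f)\to S^2$ of $d$ points on fibers correspond to $d$--multisections, and near the critical points they correspond to surfaces which are standard in the Donaldson--Smith sense. Perutz shows that a homotopy class of sections of $X_r(f)$ exists in each homology class with the appropriate intersection with the fiber. The topological content is that the obstruction to a section of the Hilbert scheme lies in $\pi_1$ of the fiber $\operatorname{Sym}^d(\Sigma)$ modulo the vanishing data, which is abelian $H_1(\Sigma)$ quotiented appropriately for $d\geq2$. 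This is why only the abelianized obstruction matters. A smooth section of the Hilbert scheme then gives a braided surface whose singular points can be perturbed to simple branch points of either sign, i.e.\ an achiral $d$--section.

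If citing Perutz is insufficient, I have a hands-on fallback. Represent $A$ by an embedded surface, make it transverse to $f$ away from critical points, and then make $f|_S$ a branched cover. For that I would use a Lefschetz-fibration version of the standard argument: remove fold-type and vertical tangencies, adding canceling pairs of branch points, and tube in vertical fiber-parallel copies to eliminate components of negative local degree. This works because for $d\geq2$ the surface braid group's abelianization is compatible with $H_1(\Sigma)$. I expect this step, realizing a homology class as an honest braided surface, to be the main obstacle. In particular, handling the local behavior near the Lefschetz critical points is the hard part, which is why invoking the relative Hilbert scheme is the preferred route. Finally, Lemma~\ref{lem:positivize-multisection} converts the achiral $d$--section into a positive one, completing (i).
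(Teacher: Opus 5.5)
Your arguments for (i)$\Rightarrow$(ii) and (ii)$\Leftrightarrow$(iii) are fine; they match the paper's exact sequence of the pair $(X,X^\circ)$. Your outline for (ii)$\Rightarrow$(i) also follows the paper's route: a section of the relative Hilbert scheme, perturbed transverse to the diagonal to give an achiral $d$--section, then Lemma~\ref{lem:positivize-multisection}.

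\textbf{The gap.} The step that carries the weight is false as stated. A section of $f^{[d]}$ does not exist in every class $A$ with $A\cdot[\Sigma]=d$.
\begin{itemize}
\item Suppose $f$ has a reducible singular fiber $F_v=F_{v,1}\cup F_{v,2}$, i.e.\ a separating vanishing cycle.
\item Put a section of the relative Hilbert scheme in general position. Over the critical value it then avoids the real codimension--$4$ loci of subschemes meeting the node and of non-reduced subschemes.
\item Near $F_v$ it is therefore $d$ local sections of $f$, meeting $F_v$ transversely and positively at smooth points. So its relative cycle pairs nonnegatively with both $F_{v,1}$ and $F_{v,2}$. The same holds for any $d$--section.
\end{itemize}
Hence a class with $A\cdot F_{v,1}<0$ is never realized. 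The missing step is to first replace $A$ by
\[
A'=A+\sum_v (A\cdot F_{v,1})\,[F_{v,1}].
\]
Since $F_{v,1}\cdot[\Sigma]=0$, we still have $A'\cdot[\Sigma]=d$. Since $F_{v,1}^2=-1$ and $F_{v,1}\cdot F_{v,2}=1$, the class $A'$ pairs as $(0,d)$ with $(F_{v,1},F_{v,2})$. This is enough, because the theorem only asks for some class of fiber degree $d$.

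\textbf{The patching is not actually done.} Your remark that ``the obstruction lies in $\pi_1(\operatorname{Sym}^d\Sigma)$ modulo the vanishing data'' gestures at the patching but does not carry it out. The fallback via cancelling tangencies and tubing is not an argument. What has to be shown is the following.
\begin{itemize}
\item \emph{Near each singular fiber.} Choose $d$ distinct smooth points, on $F_{v,2}$ in the reducible case, to get a local section $\sigma_v$. Its relative class is that of $A'$, since relative classes there are determined by intersections with the fiber components.
\item \emph{Over the complement $U$ of the disks.} $U$ retracts onto a graph. A section with the relative class of $A'$ exists because $\pi_1(\operatorname{Sym}^d\Sigma)\cong H_1(\Sigma;\Z)$ for $d\ge 2$. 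Consequently, changes along edges realize all of $H^1(U;\mathcal H_1)$.
\item \emph{Gluing.} Homotopy classes of sections of the $\operatorname{Sym}^d$--bundle over a circle with monodromy $T$ are parametrized by $H_1(\Sigma;\Z)/(T_*-1)H_1(\Sigma;\Z)$. The same group parametrizes degree--$d$ classes in $H_1$ of the boundary mapping torus. So equal boundary cycle classes give homotopic boundary sections, and the pieces glue.
\end{itemize}
With the modification to $A'$ and this patching, your outline becomes the paper's proof.
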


\begin{proof}
The implication ${\rm (i)}\Rightarrow{\rm (ii)}$ is immediate by taking
$A$ to be the homology class of the multisection.

Suppose ${\rm (ii)}$ holds. We first modify $A$, without changing its
intersection with a regular fiber, so that it has nonnegative
intersection with every irreducible component of every singular fiber.
For each reducible singular fiber, write
\[
F_v=F_{v,1}\cup F_{v,2},
\]
where the two components meet once positively. In homology,
\[
[F_v]=[F_{v,1}]+[F_{v,2}],\qquad
F_{v,1}^2=F_{v,2}^2=-1,
\qquad
F_{v,1}\cdot F_{v,2}=1.
\]
Set $a_v=A\cdot F_{v,1}$ and define
\[
A'=A+\sum_v a_v[F_{v,1}],
\]
where the sum is over the reducible singular fibers. Components
belonging to distinct fibers are disjoint, so these modifications do
not interfere with one another. Moreover,
\[
A'\cdot[\Sigma]=d,
\]
and for each reducible singular fiber,
\[
A'\cdot F_{v,1}=0,
\qquad
A'\cdot F_{v,2}=d.
\]
For an irreducible singular fiber, its underlying $2$--cycle represents
$[\Sigma]$, so its intersection with $A'$ is $d$. Thus $A'$ has
nonnegative intersection with every irreducible component of every
singular fiber.

Let
\[
f^{[d]}\colon X^{[d]}\to S^2
\]
be the relative Hilbert scheme of $d$ points on the fibers of $f$.
We use the patching argument in the proof of
\cite[Proposition~4.10]{Perutz:matching-2}, spelling out the local
choices at the singular fibers.

Choose disjoint small disks $D_v$ about the critical values and put
$E_v=f^{-1}(D_v)$. At a reducible fiber, choose $d$ distinct smooth
points on $F_{v,2}$; at an irreducible fiber, choose $d$ distinct smooth
points away from the node. These points extend to $d$ disjoint local
sections over $D_v$, and hence to a section $\sigma_v$ of the relative
Hilbert scheme. Their relative cycle represents the restriction of
$A'$ to $H_2(E_v,\partial E_v;\Z)$. Indeed, $E_v$ retracts onto its
nodal fiber, $H_1(E_v;\Z)$ is torsion-free, and $H_2(E_v;\Z)$ is
freely generated by the irreducible components of that fiber.
By Poincar\'e--Lefschetz duality and the universal coefficient theorem,
a relative $2$--class is therefore determined by its intersections
with these components. For both classes the intersections are $(0,d)$
in the reducible case and $d$ in the irreducible case.

Over
\[
U=S^2\setminus\bigcup_v\operatorname{int}D_v
\]
the fibration is a surface bundle, and $U$ retracts onto a graph.
Since $d\geq2$,
\[
\pi_1(\operatorname{Sym}^d\Sigma)\cong H_1(\Sigma;\Z).
\]
Consequently, the cycle-class map from sections of the relative
symmetric product over $U$ onto relative $2$--classes of fiber degree
$d$ is surjective. To see this directly, choose a section over the
graph, which is possible because $\operatorname{Sym}^d\Sigma$ is
connected. Changing it along the edges is parametrized, modulo changes
at the vertices, by $H^1(U;\mathcal H_1)$, where $\mathcal H_1$ is the
local system of first homology groups of the fibers. By fiberwise
Poincar\'e duality, these changes give exactly the kernel of restriction
$H^2(f^{-1}(U);\Z)\to H^2(\Sigma;\Z)$: the cohomology sequence of a
bundle over a graph identifies this kernel with
$H^1(U;\mathcal H_1)$. The cycle map induces this identification,
since moving a point around a loop in the fiber sweeps out that loop.
We can therefore choose a section $\sigma_U$ whose relative cycle is
the restriction of $A'$.

The restrictions of $\sigma_U$ and $\sigma_v$ to each common boundary
have the same cycle class in $H_1(f^{-1}(\partial D_v);\Z)$, with the
same boundary orientation. For a surface bundle over a circle with
monodromy $T$, homotopy classes of sections of its degree--$d$ symmetric
product are parametrized, after choosing one section, by
\[
H_1(\Sigma;\Z)/(T_*-1)H_1(\Sigma;\Z).
\]
This follows from the usual twisted-conjugacy description and the
abelian fundamental group of $\operatorname{Sym}^d\Sigma$. The same
quotient parametrizes degree--$d$ classes in the first homology of the
boundary mapping torus. Thus equality of the boundary cycle classes
implies that the boundary sections are homotopic through sections.
Using these homotopies on collars, we glue $\sigma_U$ and the
$\sigma_v$ to a smooth section $\sigma$ of $f^{[d]}$. Only its degree
$d$, rather than its precise global cycle class, will be needed.

Perturb $\sigma$ through sections, keeping it fixed near the singular
fibers, so that over the regular values it is transverse to the
diagonal strata. The strata other than the top diagonal have real
codimension at least four, so they are avoided. Near each transverse
intersection with the top diagonal, we may further arrange the local
holomorphic or antiholomorphic model, according to the intersection
sign. As in the proof of \cite[Lemma~4.11]{Perutz:matching-2}, the
pullback of the universal family is then a smooth surface $T_\sigma$
whose map to $X$ is an embedding and whose projection to $S^2$ is a
degree--$d$ simple branched covering. Its local models are
\[
z=w^2\qquad\text{and}\qquad z=\bar w^2.
\]
The image is therefore an embedded achiral $d$--section, disjoint from
$\operatorname{Crit}(f)$. Lemma~\ref{lem:positivize-multisection} turns
it into a positive $d$--section. This last modification may change its
homology class, but preserves the degree $d$. This proves
${\rm (ii)}\Rightarrow{\rm (i)}$.

It remains to compare ${\rm (ii)}$ and ${\rm (iii)}$. Let
\[
X^\circ=X\setminus\operatorname{int}\nu(\Sigma).
\]
By excision,
\[
H_2(X,X^\circ;\Z)
\cong
H_2(\Sigma\times D^2,\Sigma\times S^1;\Z)
\cong\Z.
\]
The relevant part of the long exact sequence of the pair
$(X,X^\circ)$ is therefore
\[
H_2(X;\Z)
\xrightarrow{\;\cdot[\Sigma]\;}
\Z
\xrightarrow{\;\partial\;}
H_1(X^\circ;\Z)
\cong H_1(G;\Z).
\]
The first map is algebraic intersection with the fiber. A generator of
the relative group is represented by a normal disk to $\Sigma$, whose
boundary is a positively oriented meridian. Consequently,
\[
\partial(1)=\pm\overline\omega(f).
\]
Exactness now gives
\[
\text{there is an }A\in H_2(X;\Z)
\text{ with }A\cdot[\Sigma]=d
\quad\Longleftrightarrow\quad
 d\,\overline\omega(f)=0,
\]
which proves the remaining equivalence.
\end{proof}

When $f$ is symplectic, the multisections above can be made symplectic
for a symplectic form compatible with $f$. Indeed, start with a
compatible symplectic form $\omega_0$ on $X$ and an area form
$\omega_{S^2}$ on the base. For a given positive multisection $S$,
\[
\omega_0+\kappa f^*\omega_{S^2}
\]
is symplectic and restricts positively to $S$ for all sufficiently
large $\kappa$ \cite[Lemma~2.8]{donaldson-smith:pencils}. At a branch point,
$T S$ is the positively oriented fiber tangent plane, where
$\omega_0$ is already positive; away from small neighborhoods of these
points, the base term gives the required positivity.

\begin{remark}
The existence result in Theorem~\ref{thm:multisection-degrees} is also
natural in light of the role of standard surfaces in the
Donaldson--Smith and Perutz approach to Seiberg--Witten theory. In that
framework, sections of relative Hilbert schemes for Lefschetz pencils
give standard surfaces downstairs, i.e. multisections of the pencil,
and these are the geometric objects through which Seiberg--Witten
basic classes, including the canonical class, enter the theory
\cite{donaldson-smith:pencils,Perutz:matching-2}.
\end{remark}

Let
\[
m(f)=
\min\bigl\{
A\cdot[\Sigma]>0
\ \big|\
A\in H_2(X;\Z)
\bigr\},
\]
whenever this set is nonempty. Equivalently, $m(f)$ is the divisibility
of the fiber class $[\Sigma]$ in
$H_2(X;\Z)/\operatorname{Tor}$.

\begin{cor}\label{cor:multisection-divisibility}
For every Lefschetz fibration $f\colon X\to S^2$,
\[
m(f)=\operatorname{ord}\bigl(\overline\omega(f)\bigr),
\]
where the zero element is understood to have order $1$. Moreover, the
multisection degrees $d\geq2$ are exactly those
divisible by $m(f)$. In particular, every Lefschetz fibration over
$S^2$ admits a multisection, and if $[\Sigma]$ is primitive, then $f$
admits a $d$--section for every $d\geq2$, hence a $2$--section.
\end{cor}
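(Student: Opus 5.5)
The plan is to read everything off the exact sequence at the end of the proof of Theorem~\ref{thm:multisection-degrees}:
\[
H_2(X;\Z)\xrightarrow{\;\cdot[\Sigma]\;}\Z\xrightarrow{\;\partial\;}H_1(G;\Z),
\qquad \partial(1)=\pm\overline\omega(f).
\]
By exactness, the image of $\cdot[\Sigma]$ equals $\ker\partial$. This image is a subgroup of $\Z$, so it has the form $m\Z$ for a unique $m\geq 0$. Since $\partial(d)=\pm d\,\overline\omega(f)$, we have $\ker\partial=\{d : d\,\overline\omega(f)=0\}$. This kernel is $\operatorname{ord}(\overline\omega(f))\Z$ when the order is finite, and $0$ otherwise.

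First I will check that the order is finite, so that the defining set for $m(f)$ is nonempty. The fiber $\Sigma$ has self-intersection $0$, and Poincar\'e duality over $\Z$ modulo torsion is unimodular. Because $[\Sigma]$ is nonzero in $H_2(X;\Z)/\operatorname{Tor}$, there exists a class $A$ with $A\cdot[\Sigma]\neq0$. To see that $[\Sigma]$ is non-torsion, I would use that a Lefschetz fibration with nonempty critical locus has fiber class nonzero rationally: a vanishing-cycle Lefschetz thimble paired with a suitable class, or the standard fact that $[\Sigma]$ is nontrivial because the fibration is symplectic after Gompf's construction when $g\ge1$. The genus-$0$ case is immediate from a section. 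This is the main point needing care; I would cite the standard nontriviality of the fiber class from \cite{gompf-stipsicz:book}.

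Hence $\ker\partial\neq0$, so $\overline\omega(f)$ has finite order. The image of $\cdot[\Sigma]$ is then $m(f)\Z$, where $m(f)$ is its least positive element. Comparing the two descriptions of the same subgroup gives $m(f)=\operatorname{ord}(\overline\omega(f))$, with order $1$ when $\overline\omega(f)=0$. Since intersection with $[\Sigma]$ kills torsion, $m(f)$ is also the divisibility of $[\Sigma]$ in $H_2(X;\Z)/\operatorname{Tor}$, again by unimodularity.

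For the remaining statements, Theorem~\ref{thm:multisection-degrees} (i)$\Leftrightarrow$(iii) says that $f$ admits a $d$--section with $d\geq2$ exactly when $d\,\overline\omega(f)=0$, that is, when $m(f)\mid d$. Taking $d=2m(f)\geq2$ produces a multisection for every $f$. If $[\Sigma]$ is primitive, then $m(f)=1$ and every $d\geq2$ occurs, in particular $d=2$.
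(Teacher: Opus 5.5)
Your argument follows the paper's. You use the same exact sequence $H_2(X;\Z)\to\Z\to H_1(G;\Z)$, the same comparison of two descriptions of one subgroup of $\Z$ to get $m(f)=\operatorname{ord}(\overline\omega(f))$, and the same appeal to (i)$\Leftrightarrow$(iii) of Theorem~\ref{thm:multisection-degrees} for the degree statements. The step that does not hold up as written is the one you flagged: that $[\Sigma]$ is non-torsion. This step carries real weight. It is what makes $m(f)$ defined and $\overline\omega(f)$ of finite order, and hence what yields ``every Lefschetz fibration admits a multisection.''

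For $g\geq2$, citing Gompf is acceptable, because the homological hypothesis in his theorem is automatic there. The underlying reason is $\langle c_1(TX,J),[\Sigma]\rangle=2-2g\neq0$ for an almost complex structure adapted to $f$, which is how the paper argues. For $g=1$, however, Gompf's construction \emph{assumes} $[\Sigma]\neq0$ in $H_2(X;\R)$, so invoking it is circular. The hypothesis is not vacuous in genus $1$: the torus bundle $S^1\times S^3\to S^2$ has null-homologous fiber. Your thimble suggestion does not close the gap either, since a thimble is only a relative class and does not give a closed surface meeting $\Sigma$ nontrivially.

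The paper handles genus $1$ with the classification of genus--$1$ Lefschetz fibrations over $S^2$ with nonempty critical locus. Each is an elliptic fibration on some $E(n)$, which has a section, so $[\Sigma]$ is even primitive. Genus $0$ cannot occur under relative minimality, so your ``immediate from a section'' remark is harmless but moot. Once you use that classification in the genus--$1$ case (unless that is the fact you meant to cite), the rest of your proof goes through unchanged.
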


\begin{proof}
First note that the fiber class is homologically essential in our
conventions. The genus--$0$ case does not occur under our relative
minimality convention. In genus $1$, the classification of nontrivial
genus--$1$ Lefschetz fibrations over $S^2$ identifies the fibration with
an elliptic fibration on $E(n)$, for some $n\geq1$; in particular, it
admits a section and $[\Sigma]$ is primitive
\cite[Section~4]{baykur-kamada}. For fiber genus $g\geq2$, an
almost complex structure adapted to the fibration gives
\[
\langle c_1(TX,J),[\Sigma]\rangle
=
\chi(\Sigma)=2-2g\neq0,
\]
so $[\Sigma]$ is non-torsion. Thus, in all cases, $[\Sigma]$ is
non-torsion; by unimodularity of the intersection form on the free part
of $H_2(X;\Z)$, the integer $m(f)$ is defined.

The image of the intersection map
\[
H_2(X;\Z)\xrightarrow{\;\cdot[\Sigma]\;}\Z
\]
is $m(f)\Z$. On the other hand, the exact sequence in the proof of
Theorem~\ref{thm:multisection-degrees} shows that its image is the
kernel of the map sending $1$ to
$\pm\overline\omega(f)$. This kernel is
$\operatorname{ord}(\overline\omega(f))\Z$, which proves the equality.
The statement about degrees now follows from
Theorem~\ref{thm:multisection-degrees}. In particular, one may take any
multiple of $m(f)$ which is at least two. Finally, $[\Sigma]$ is
primitive precisely when $m(f)=1$.
\end{proof}

\begin{cor}\label{cor:abelian-multisections}
If $G$ is abelian, then
\[
m(f)=\operatorname{ord}\bigl(\omega(f)\bigr).
\]
In particular, if $G=1$, then $f$ admits a $d$--section for every
$d\geq2$. This latter conclusion holds whenever the Dehn twists about
the vanishing cycles generate the mapping class group of the fiber.
\end{cor}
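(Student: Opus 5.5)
The argument is a direct specialization of Corollary~\ref{cor:multisection-divisibility}, so the plan is to reduce everything to the structure of $G$.

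First, when $G$ is abelian, the abelianization map $G\to H_1(G;\Z)$ is an isomorphism. It sends the meridian class $\omega(f)$ to $\overline\omega(f)$, so $\operatorname{ord}(\omega(f))=\operatorname{ord}(\overline\omega(f))$, with the zero/identity element again understood to have order $1$. Here the class $\omega(f)$ is well defined in $G$ and not only up to conjugacy, because $G$ is abelian (or simply because, as noted after Proposition~\ref{prop:pointpush-obstruction}, $\omega(f)$ is central). The orientation ambiguity does not change the order. Combining this with Corollary~\ref{cor:multisection-divisibility} gives $m(f)=\operatorname{ord}(\omega(f))$.

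Second, if $G=1$, then $\omega(f)=1$, so $m(f)=1$. Corollary~\ref{cor:multisection-divisibility} (equivalently Theorem~\ref{thm:multisection-degrees}(iii), which holds trivially since $H_1(G;\Z)=0$) then yields a $d$--section for every $d\geq2$.

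Third, suppose the Dehn twists $t_{c_1},\dots,t_{c_n}$ generate the mapping class group of the fiber, and show that $G=F/N$ is trivial. The genus $0$ case is excluded by our conventions, so $\M(\Sigma_g)$ is generated by twists, and nontrivial generation forces $g\geq1$.
\begin{itemize}
\item The set of isotopy classes $\{c_1,\dots,c_n\}$ is then invariant under no proper subgroup. More usefully, for any mapping class $\phi$ written as a word in the $t_{c_i}^{\pm1}$, the curve $\phi(c_j)$ is obtained from $c_j$ by iteratively applying twists about the $c_i$.
\item The normal closure $N$ is preserved by each $t_{c_i*}$, since $t_{c_i*}$ acts trivially on $F/N$: it changes a loop by multiples of $c_i$. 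Hence $N$ contains the free homotopy classes of $\phi(c_j)$ for every $\phi\in\M(\Sigma_g)$.
\item Generation by twists forces the curves $c_i$ to include nonseparating ones. Indeed, the twists about separating curves lie in the Torelli group and cannot generate all of $\M(\Sigma_g)$, as they act trivially on $H_1$. By transitivity of $\M(\Sigma_g)$ on nonseparating curves, $N$ then contains every nonseparating simple closed curve.
\item Nonseparating simple loops generate $\pi_1(\Sigma_g)$, for instance the standard generators $a_i,b_i$. Therefore $N=F$ and $G=1$.
\end{itemize}

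The genus $1$ case can alternatively be handled directly: $F=\Z^2$, and two generating twists must be about curves spanning $H_1$.

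The main obstacle is bookkeeping rather than substance. One must justify that the action of a twist $t_{c_i}$ on $F$ really preserves $N$ when the curves are based at $p$. Because point-pushing ambiguities only change loops by conjugation and multiples of $c_i$, this follows from the same observation used in the proof of Lemma~\ref{lem:pointpush-class}. Alternatively, one can phrase the argument geometrically: $\pi_1(X^\circ)$ is the quotient of $\pi_1(\Sigma)$ by the vanishing cycles, and the fact that these cycles' twists generate the mapping class group forces their orbit to contain all nonseparating curves.
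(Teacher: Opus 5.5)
Your proposal is correct and follows essentially the same route as the paper. You identify $\omega(f)$ with $\overline\omega(f)$ via the isomorphism $G\to H_1(G;\Z)$ and invoke Corollary~\ref{cor:multisection-divisibility}, then show $N=F$ from three facts: invariance of $N$ under the vanishing-cycle twists (hence under all of $\M(\Sigma_g)$), the Torelli argument guaranteeing a nonseparating vanishing cycle, and transitivity on nonseparating curves.
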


\begin{proof}
When $G$ is abelian, the conjugacy class $\omega(f)$ determines an
element of $G$, and the abelianization map
$G\to H_1(G;\Z)$ is an isomorphism. Hence
$\overline\omega(f)=\omega(f)$, and the equality follows from
Corollary~\ref{cor:multisection-divisibility}. If $G=1$, then $m(f)=1$,
and therefore every $d\geq2$ occurs as the degree of a multisection.

Finally, suppose that the Dehn twists about the vanishing cycles
generate $\M(\Sigma_g)$. The normal subgroup $N\lhd F$ is invariant
under each of these twists, since every $t_{c_i}$ acts trivially on
$F/N$, and hence it is invariant under the whole mapping class group.
At least one vanishing cycle is nonseparating. This is automatic in
genus $1$, while for $g\geq2$ the subgroup generated by separating
twists acts trivially on $H_1(\Sigma_g;\Z)$ and cannot be the whole
mapping class group. The mapping class group acts transitively on
nonseparating simple closed curves, so $N$ contains every such curve.
In particular, it contains a standard generating set for $F$, and
therefore $N=F$ and $G=1$.
\end{proof}

\begin{remark}
For the examples in Theorem~A, the obstruction $\omega(f)$ is
nontrivial in $G$, whereas its image $\overline\omega(f)$ in
$H_1(G;\Z)$ vanishes. Thus these fibrations admit $2$--sections, even
though they do not admit sections. This illustrates that the
obstruction to a section is genuinely nonabelian.
\end{remark}

The class $\omega(f)$ can also constrain the topology of multisections, rather than just their degrees.

\begin{proposition}\label{prop:spherical-degrees}
For a Lefschetz fibration $f\colon X\to S^2$,
\[
{\rm im}\bigl(f_*:\pi_2(X)\to\pi_2(S^2)\cong\mathbb Z\bigr)
=
\{d\in\mathbb Z:\omega(f)^d=1\}.
\]
Consequently, if $\omega(f)$ has finite order $r$, every spherical
multisection has degree divisible by $r$. If $\omega(f)$ has
infinite order, no multisection has a spherical component.
\end{proposition}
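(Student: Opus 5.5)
We compute the image of $f_*$ on $\pi_2$ through the homotopy exact sequence of the fibration near its boundary. Concretely, we use the decomposition $X = X^\circ\cup \nu(F_0)$ with $\nu(F_0)\cong\Sigma\times D^2$, together with the facts that $G=\pi_1(X^\circ)$ and that $\omega(f)$ is the meridian class, central in $G$ (Proposition~\ref{prop:pointpush-obstruction}).

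For the inclusion $\supseteq$, suppose $\omega(f)^d=1$ in $G$. Take a point $q\in F_0$ and the disk $\{q\}\times D^2\subset\nu(F_0)$, whose boundary is the meridian $\mu$ representing $\omega(f)$. The loop $\mu^d$, which is the boundary of the $d$-fold branched disk $\{q\}\times D^2$ composed with $z\mapsto z^d$, is null-homotopic in $X^\circ$. So $\mu^d$ bounds a singular disk $\Delta\subset X^\circ$, and $f(\Delta)\subset D$. Gluing $\Delta$ to the disk $z\mapsto(q,z^d)$ gives a map $S^2\to X$. Its composite with $f$ is the map of degree $d$ over the complementary disk union a map into the disk $D$, so its degree is $d$: the preimage of a regular point in the complement of $D$ consists of $d$ positive points. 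For $d$ with $\omega^{d}=1$ this shows $d$ lies in the image; negative $d$ follow by reversing orientation.

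For the inclusion $\subseteq$, take $s\colon S^2\to X$ of degree $d=f_*[s]$. Make $s$ transverse to $F_0$ and homotope it, via the product structure on $\nu(F_0)$, so that $s^{-1}(\nu(F_0))$ is a union of disks $D_1,\dots,D_k$. Each $D_j$ maps by a fiberwise-constant model onto a normal disk with local degree $\epsilon_j=\pm1$, and $\sum\epsilon_j=d$ by counting preimages of a regular value over $F_0$. The complement $S^2\setminus\bigcup \operatorname{int}D_j$ is a planar surface mapped into $X^\circ$. Its boundary loops are meridians at different points of $F_0$, each conjugate to $\omega^{\epsilon_j}$, and in $\pi_1$ of a planar surface the product of suitably based boundary loops is trivial. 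Since $\omega(f)$ is central, every conjugate of $\omega^{\pm1}$ equals $\omega^{\pm1}$ itself; this is exactly where centrality matters. Pushing the planar relation into $G$ gives $\omega(f)^{\sum\epsilon_j}=\omega(f)^d=1$. The main care needed is in the orientation bookkeeping, namely that each positively oriented boundary circle of a positive-degree disk is the positively oriented meridian, and in the transversality reduction to normal disks. Both are standard.

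For the consequences, let $S$ be a spherical multisection, or a spherical component of a multisection, of degree $d'>0$. Its inclusion gives an element of $\pi_2(X)$ whose image under $f_*$ is $d'$. Hence $\omega(f)^{d'}=1$. If $\omega(f)$ has order $r$, this forces $r\mid d'$; if $\omega(f)$ has infinite order, no $d'>0$ is possible, so no multisection has a spherical component.
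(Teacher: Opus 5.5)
Your proof is correct and is essentially the paper's argument. For $\subseteq$ you make the sphere transverse to a regular fiber, use the boundary relation of the punctured sphere in $X^\circ$, and use centrality of the meridian to get $\omega(f)^{d}=1$; for $\supseteq$ you cap the $d$--fold disk $z\mapsto(q,z^d)$ with a null-homotopy in $X^\circ$ (the degree $d=0$, which your construction does not literally cover, is realized by a constant map).
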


\begin{proof}
Let $\mu\in G$ be a positively oriented meridian of the regular fiber $\Sigma$.
By Section~\ref{subsec:last2handle}, it is central and agrees with $\omega(f)$
up to inversion.

Given a smooth map $u\colon S^2\to X$, make it transverse to $\Sigma$
and remove small disks around the intersection points. The
remaining punctured sphere maps into $X\setminus\Sigma$.
Its boundary relation, together with centrality of $\mu$, gives
\[
1=\prod_j g_j\mu^{\varepsilon_j}g_j^{-1}
=\mu^{\sum_j\varepsilon_j},
\]
where $\varepsilon_j$ are the local intersection signs. Since
\[
\sum_j\varepsilon_j=u_*[S^2]\cdot[\Sigma]=\deg(f\circ u),
\]
every spherical degree belongs to the indicated set.

Conversely, suppose $\mu^d=1$ for some $d>0$. Fix $p\in\Sigma$.
The disk $z\mapsto(p,z^d)$ in $\nu\Sigma\cong\Sigma\times D^2$ has
boundary representing $\mu^d$. Capping it with a null-homotopy
in $X\setminus{\rm int}\,\nu\Sigma$ and smoothing gives a sphere map
whose algebraic intersection with $\Sigma$, and hence degree over the
base, is $d$. Reversing the domain orientation
gives negative degrees, and a constant map gives degree zero.
The assertions about multisections follow because each
component has positive degree.
\end{proof}

In particular, $\omega(f)=1$ if and only if $f$ admits a
homotopy section. 

Recall that a class in $H^2(X;\mathbb R)$ is called \emph{aspherical}
if it evaluates trivially on every spherical class.
A symplectic form $\Omega$ on $X$ is called
\emph{symplectically aspherical} if
$\int_{S^2}u^*\Omega=0$ for every smooth map $u\colon S^2\to X$
\cite[Section~1]{kedra-rudyak-tralle:aspherical}.

\begin{corollary}\label{cor:aspherical-form}
Let $f\colon X\to S^2$ be a Lefschetz fibration with irreducible fibers
and regular fiber $\Sigma$. Suppose that $\omega(f)$ has infinite order and that an
aspherical class $\eta\in H^2(X;\mathbb R)$ satisfies
$\langle\eta,[\Sigma]\rangle>0$.
Then $f$ admits compatible symplectically aspherical forms
in the classes $\eta+ \kappa \,{\rm PD}[\Sigma]$
for all sufficiently large $\kappa$.
\end{corollary}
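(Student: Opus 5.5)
The plan is to use the Gompf--Thurston construction of symplectic forms compatible with a Lefschetz fibration. This construction lets us prescribe the cohomology class to be $\eta+\kappa\,{\rm PD}[\Sigma]$ for $\kappa$ large, provided $\langle\eta,[\Sigma]\rangle>0$. The extra input needed is that every spherical class pairs trivially with ${\rm PD}[\Sigma]$; this will come from Proposition~\ref{prop:spherical-degrees}.

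Step one is to construct compatible forms in the required classes. Gompf's theorem (as in \cite{gompf-stipsicz:book}) applies because the fibers are irreducible, so no fiber component has a homologically trivial fiber-class issue. It says that if $\eta$ evaluates positively on each fiber component, here just $[\Sigma]$, then for all sufficiently large $\kappa$ there is a symplectic form $\Omega_\kappa$ compatible with $f$ in the class $\eta+\kappa\,{\rm PD}[\Sigma]$.

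The construction runs as follows. Choose a closed $2$--form representing $\eta$ that is positive on every fiber near the critical points; this uses the standard local Kähler model and the irreducibility of the fibers to patch fiberwise. Then add $\kappa f^*\omega_{S^2}$, normalized with total area $1$, so that its class is ${\rm PD}[\Sigma]$. The irreducibility hypothesis is exactly what removes the need for the usual correction on reducible fiber components. I will mostly cite this step rather than reprove it.

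Step two is to check asphericity. Let $u\colon S^2\to X$ be smooth. Then
\[
\int_{S^2}u^*\Omega_\kappa=\langle\eta,u_*[S^2]\rangle+\kappa\,u_*[S^2]\cdot[\Sigma].
\]
The first term vanishes because $\eta$ is aspherical. For the second term, note that $u_*[S^2]\cdot[\Sigma]=\deg(f\circ u)$, as in the proof of Proposition~\ref{prop:spherical-degrees}. That proposition says this degree lies in $\{d:\omega(f)^d=1\}$, which is $\{0\}$ when $\omega(f)$ has infinite order. So the integral vanishes for every $\kappa$.

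The main obstacle is step one. Specifically, I must make sure the positivity-on-fibers condition is met for the given class $\eta$, and not merely for some representative of a nearby class. I expect to handle this by following Gompf's argument: choose fiberwise area forms, use $\langle\eta,[\Sigma]\rangle>0$ to match the class on a single fiber, and let the $\kappa f^*\omega_{S^2}$ term absorb the horizontal negativity.
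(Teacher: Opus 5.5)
Your proposal is correct and follows the paper's proof. Irreducibility makes $\langle\eta,[\Sigma]\rangle>0$ the full fiber-component positivity condition, so the Gompf--Thurston construction gives compatible forms in the classes $t\eta+f^*[\sigma]$, equivalently $\eta+\kappa\,{\rm PD}[\Sigma]$ after rescaling; Proposition~\ref{prop:spherical-degrees}, with $\omega(f)$ of infinite order, makes ${\rm PD}[\Sigma]$ aspherical, and the ``obstacle'' you flag in step one is already handled by the cited theorem for the given class $\eta$.
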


\begin{proof}
Let $\sigma$ be an area form of total area $1$ on $S^2$.
By Proposition~\ref{prop:spherical-degrees},
\[
\langle f^*[\sigma],u_*[S^2]\rangle=\deg(f\circ u)=0
\]
for every map $u\colon S^2\to X$. Thus
$f^*[\sigma]={\rm PD}[\Sigma]$ is aspherical.

Since the fibers are irreducible, $\eta$ evaluates positively on every
irreducible fiber component. The Gompf--Thurston construction gives
compatible symplectic forms in the classes $t\eta+f^*[\sigma]$ for all
sufficiently small $t>0$ \cite[Theorem~2.7(b)]{gompf:locally-holomorphic}.
Rescaling by $t^{-1}$ gives the indicated classes with $\kappa=t^{-1}$.
Both summands are aspherical, so these forms are
symplectically aspherical.
\end{proof}

\section{Constructions of examples} \label{sec:examples}
In this section, we first give a novel construction of Lefschetz fibrations with signature zero over the $2$--sphere of every genus $g=2h+1$, for $h \geq 3$. For simplicity, we first focus on the $g=7$ case, which allows us to highlight the essential ideas of the construction and also to work out an explicit example, and then generalize the construction to higher genera. We also revisit a genus--$9$ Lefschetz fibration (also with signature zero) we constructed earlier in \cite{baykur-hamada:Lefschetz}, now with a detailed fundamental group calculation to examine the existence of a section. 

\subsection{The genus--$7$ fibration} \label{subsec:genus7}
%======================================================

Our building block for this construction is the positive factorization of a genus--$3$ Lefschetz pencil on the four-torus given in \cite{baykur-genus3}, which, as shown in \cite{hamada-hayano:LFson4torus}, coincides with the monodromy of Smith's holomorphic pencil given in \cite{smith:LFson4torus}.

We use the following factorization in $\M(\Sigma_3^4)$, as given in \cite{baykur-hamada:Lefschetz}:
\begin{equation} \label{eq:SmithLP}
    t_{a} t_{a^\prime} t_{x} t_{b} t_{b^\prime} t_{y} t_{c} t_{c^\prime} t_{z} t_{d} t_{d^\prime} t_{w} = t_{\delta_1} t_{\delta_2} t_{\delta_3} t_{\delta_4},
\end{equation}
where the vanishing cycles are as depicted in Figure~\ref{fig:Genus3_SmithLP}.

\begin{figure}[htbp]
\centering
\includegraphics[height=150pt]{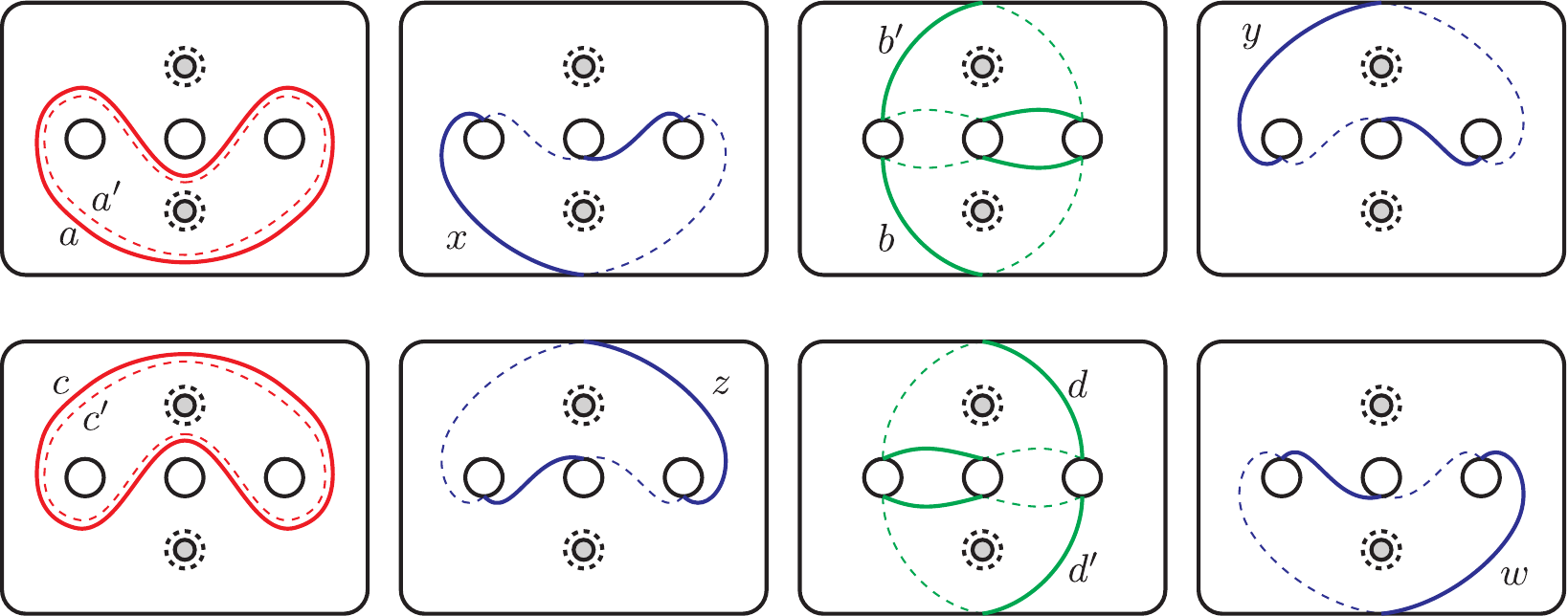}
\caption{Vanishing cycles for Smith's pencil. The reference surface is $\Sigma_3^4$, where the two boundary components drawn with solid lines are on the near side of the surface, while the two drawn with dashed lines are on the far side.}
\label{fig:Genus3_SmithLP}
\end{figure}
One remarkable feature of this particular monodromy factorization is that it contains surprisingly many bounding pairs.
In fact, in our previous construction of the genus--$9$ Lefschetz fibration with signature zero in \cite{baykur-hamada:Lefschetz}, we took six copies of this relation and effectively used the bounding pairs $(a,a^\prime)$, $(b,b^\prime)$, $(c,c^\prime)$, and $(d,d^\prime)$ to successfully breed them.
Here, we will take advantage of another set of bounding pairs $(x,z)$ and $(b,d)$ instead.
Note that in the monodromy their Dehn twists appear in the order $t_{x}, t_{b}, t_{z}, t_{d}$, and since $(x,z)$ and $(b,d)$ have intricate intersections, simple Hurwitz moves will not cluster the products $t_{x}t_{z}$ and $t_{b}t_{d}$ at the same time in a new monodromy factorization.
(We do not know if there exist any Hurwitz moves that do cluster them in this way.)
Although this may seem to be a significant drawback, these two bounding pairs enjoy another useful property: they are supported on a common subsurface in a configuration that will play a crucial role in our genus--$7$ construction.

Let us now describe this configuration.
First, we cluster $t_{b}, t_{z}, t_{d}, t_{x}$ in the factorization~\eqref{eq:SmithLP} by some Hurwitz moves to obtain a relation of the form 
\begin{equation} \label{eq:SmithLP_bzdx}
    P \, t_{b} t_{z} t_{d} t_{x} = t_{\delta_1} t_{\delta_2} t_{\delta_3} t_{\delta_4},
\end{equation}
where $P$ is the product of some eight Dehn twists.
Second, we conjugate this factorization by the product of Dehn twists
\begin{equation*}
    t_5 t_4 t_3 t_2 t_1
\end{equation*}
with the curves specified in Figure~\ref{fig:Genus3_BoundingPairs_Modification}, and then slide the boundary components of $\Sigma_3^4$ as also indicated in Figure~\ref{fig:Genus3_BoundingPairs_Modification}. 
From now on, we continue to denote the resulting surface by $\Sigma_3^4$.
Under this modification, the bounding pairs $(x,z)$ and $(b,d)$ are transformed, respectively, to the pairs $(X,Z)$ and $(B,D)$ in Figure~\ref{fig:Genus3_BoundingPairs_Configuration}.

\begin{figure}[htbp]
\centering
\includegraphics[height=70pt]{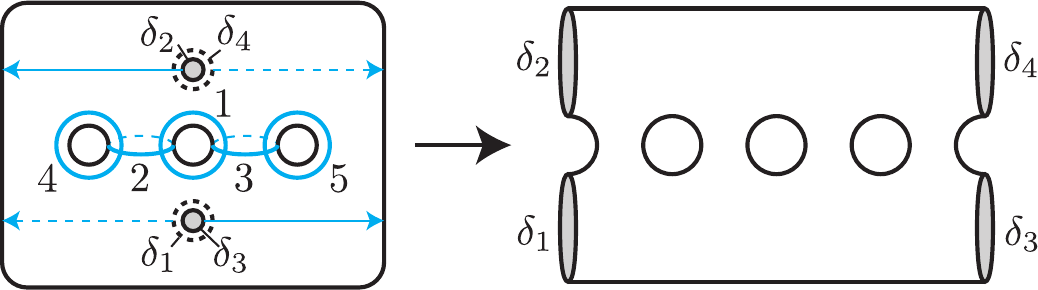}
\caption{The Dehn twist curves and sliding of the boundary components.}
\label{fig:Genus3_BoundingPairs_Modification}
\end{figure}
Thus, by Hurwitz moves and simultaneous conjugation, we obtain an equivalent monodromy factorization of the form
\begin{equation} \label{eq:SmithLP_BZDX}
    Q \, t_{B} t_{Z} t_{D} t_{X} = t_{\delta_1} t_{\delta_2} t_{\delta_3} t_{\delta_4},
\end{equation}
where $Q$ is the product of some eight Dehn twists.
Here, we observe a couple of properties of this particular configuration, which will be enough to construct our desired genus--$7$ Lefschetz fibration.
Consider the auxiliary curves labeled in Figure~\ref{fig:Genus3_BoundingPairs_Configuration} and let $S$ be the genus--$1$ subsurface of $\Sigma_3^4$ bounded by the curves $u_1, v_1, u_2, v_2$.

\begin{properties} \label{prop:XZBD-configuration}
The configuration has the following properties:
\begin{enumerate}
    \item 
    The bounding pairs $(X,Z)$, $(B,D)$, and also $(a,b)$ are all supported on the subsurface $S$.
    \item 
    Those three bounding pairs are all topologically equivalent. In fact, there are diffeomorphisms $\xi$ and $\eta$ that are supported on $S$ such that $\xi$ maps $(a,b)$ to $(X,Z)$ and $\eta$ maps $(a,b)$ to $(B,D)$.
    More explicitly, one can take $\xi=t_{c}^{-1} t_{d}^{-1} t_e^{-1}$ and $\eta=t_e^{-1}$.
    \item 
    There is a diffeomorphism $\mu$ supported on $S$ that interchanges $Z$ and $D$: $\mu(Z)=D$ and $\mu(D)=Z$.
    For example, take $\mu=t_{D} t_{c} t_{d}$.
\end{enumerate}
\end{properties}
For the last property, put $p=t_D$, $q=t_c$, and $r=t_d$.
The curves $c,D,d$ form a $3$--chain, and
$Z=(t_ct_d)^{-1}(D)$. Hence $\mu(Z)=D$. The braid relations and
$qr=rq$ give
\[
(pqr)^2p=p(pqr)^2.
\]
Thus $\mu^2(D)=D$, and consequently $\mu(D)=Z$ as well.
\begin{figure}[htbp]
\centering
\includegraphics[height=90pt]{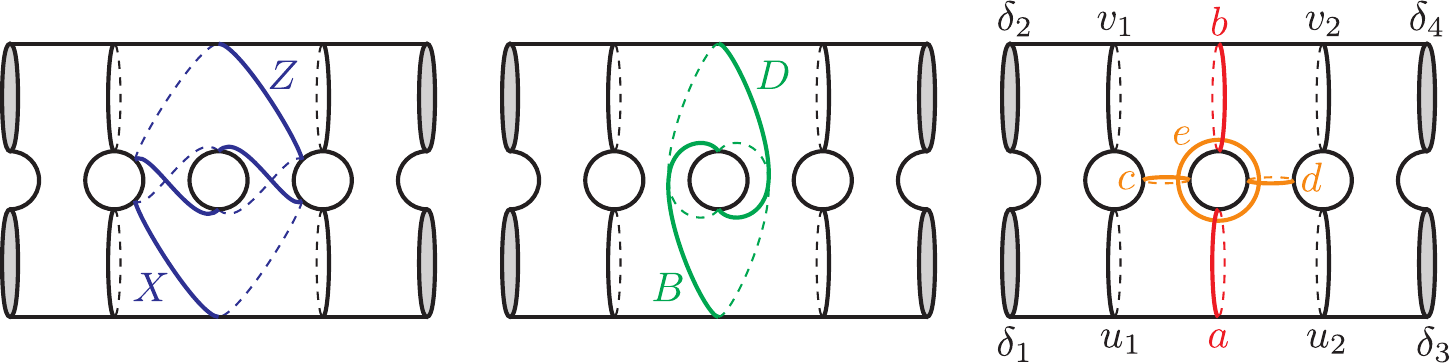}
\caption{The configuration of the bounding pairs $(X,Z)$ and $(B,D)$.}
\label{fig:Genus3_BoundingPairs_Configuration}
\end{figure}

Now we begin to construct a genus--$7$ Lefschetz fibration with signature $0$.

\noindent 
\underline{\it Schematic construction}:
We first embed $\Sigma_3^4$ into $\Sigma_7$ as indicated in the first
row of Figure~\ref{fig:EmbeddingGenus3IntoGenus7}, with the identifications
\[
\begin{aligned}
(\delta_1,\delta_2,\delta_3,\delta_4)&\longmapsto(a_3,b_3,a_2,b_2),\\
(u_1,v_1,u_2,v_2)&\longmapsto(u_1,v_1,u_2,v_2),\\
(a,b,X,Z,B,D)&\longmapsto(a_1,b_1,X_1,Z_1,B_1,D_1).
\end{aligned}
\]
Fix an orientation-preserving embedding with these identifications, as shown in Figure~\ref{fig:EmbeddingGenus3IntoGenus7}.

\begin{figure}[htbp]
\centering
\includegraphics[height=240pt]{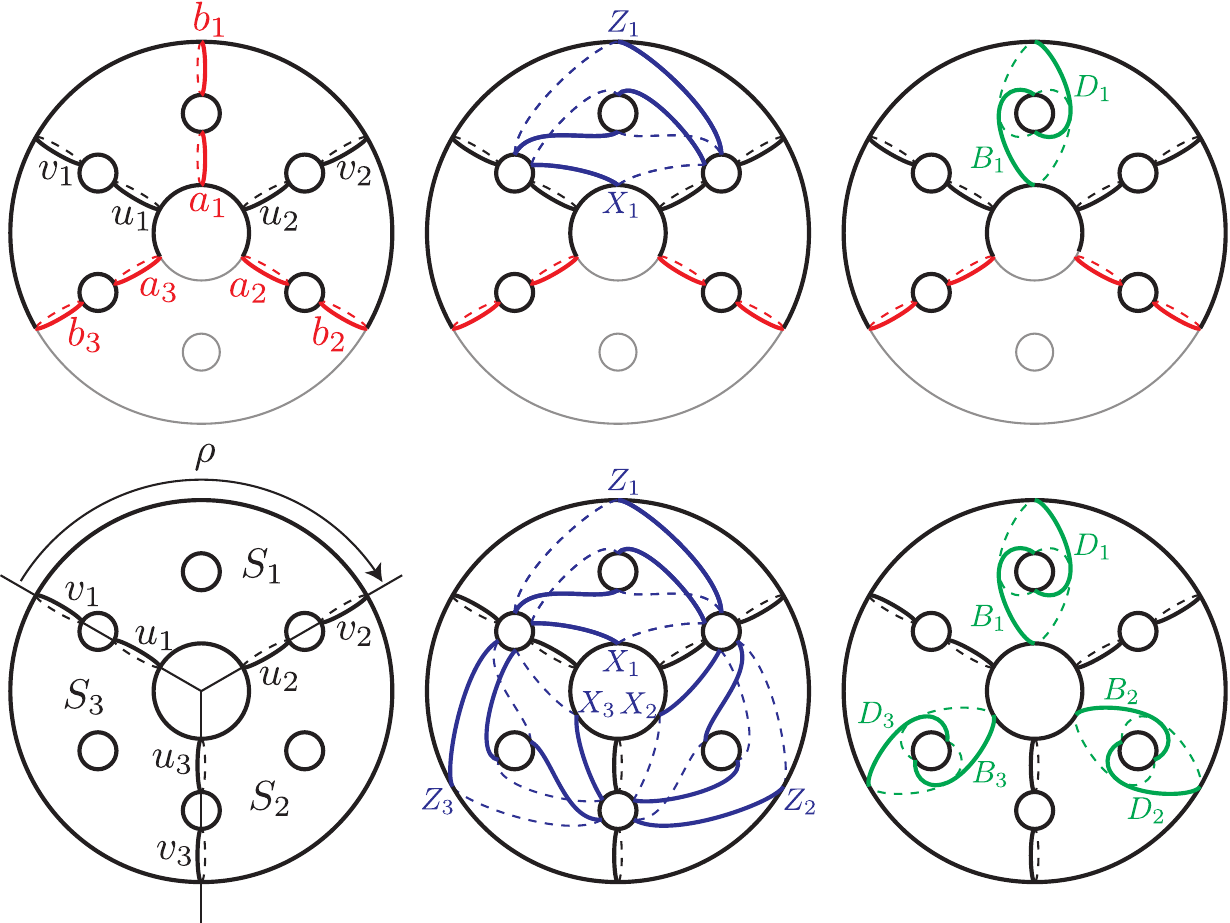}
\caption{The configuration of three circular copies of the genus--$3$ relation.}
\label{fig:EmbeddingGenus3IntoGenus7}
\end{figure}
From the relation~\eqref{eq:SmithLP_BZDX}, we deduce the relation
\begin{equation} \label{eq:embedded_SmithLP_BZDX}
    Q \, t_{B_1} t_{Z_1} t_{D_1} t_{X_1} = t_{a_3} t_{b_3} t_{a_2} t_{b_2},
\end{equation}
where, by slight abuse of notation, we continue to denote by $Q$ the image under the above embedding of the original product of eight Dehn twists. 
Now, as in Figure~\ref{fig:EmbeddingGenus3IntoGenus7}, consider the clockwise $\frac{2\pi}{3}$-rotation $\rho$ and the symmetric images of the involved curves under $\rho$ and $\rho^2$.
By these rotations, we obtain further copies of the relation~\eqref{eq:SmithLP_BZDX}:
\begin{align}
    Q^{\rho} \, t_{B_2} t_{Z_2} t_{D_2} t_{X_2} = t_{a_1} t_{b_1} t_{a_3} t_{b_3}, \\
    Q^{\rho^2} \, t_{B_3} t_{Z_3} t_{D_3} t_{X_3} = t_{a_2} t_{b_2} t_{a_1} t_{b_1}.
\end{align}
Let $S_i$ be the genus--$1$ subsurface of $\Sigma_7$ bounded by $\{u_i, v_i, u_{i+1}, v_{i+1}\}$ for $i=1,2,3$, with the subscripts understood modulo $3$.
In each of the three copied relations, $S_i$ corresponds to the subsurface $S \subset \Sigma_3^4$.
The subsurfaces $S_1$, $S_2$, and $S_3$ have pairwise disjoint interiors. We choose all supported mapping classes to be the identity on boundary collars, so mapping classes supported on different $S_i$ commute.
As observed in Properties~\ref{prop:XZBD-configuration}~(2), we have diffeomorphisms $\xi_i$ and $\eta_i$ supported on $S_i$ such that $\xi_i$ maps $(a_i,b_i)$ to $(X_i,Z_i)$ and $\eta_i$ maps $(a_i,b_i)$ to $(B_i,D_i)$.
Hence, by conjugating the relation~\eqref{eq:embedded_SmithLP_BZDX} by the composition $\eta_3 \xi_2$, we get
\begin{equation} \label{eq:RBZDX1}
    R \, t_{B_1} t_{Z_1} t_{D_1} t_{X_1} = t_{B_3} t_{D_3} t_{X_2} t_{Z_2},
\end{equation}
where $R$ is the image of $Q$ under $\eta_3 \xi_2$.
Applying the rotations again, the other two copied relations transform to
\begin{align}
    R^{\rho} \, t_{B_2} t_{Z_2} t_{D_2} t_{X_2} = t_{B_1} t_{D_1} t_{X_3} t_{Z_3}, \label{eq:RBZDX2} \\
    R^{\rho^2} \, t_{B_3} t_{Z_3} t_{D_3} t_{X_3} = t_{B_2} t_{D_2} t_{X_1} t_{Z_1}. \label{eq:RBZDX3}
\end{align}
By commutativity, we can rewrite the three relations as
\begin{align}
    R &= t_{X_1}^{-1} t_{D_1}^{-1} t_{Z_1}^{-1} t_{B_1}^{-1} t_{X_2} t_{Z_2} t_{B_3} t_{D_3}, \label{eq:R} \\ 
    R^{\rho} &= t_{B_1} t_{D_1} t_{X_2}^{-1} t_{D_2}^{-1} t_{Z_2}^{-1} t_{B_2}^{-1} t_{X_3} t_{Z_3}, \label{eq:Rrho} \\ 
    R^{\rho^2} &= t_{X_1} t_{Z_1} t_{B_2} t_{D_2} t_{X_3}^{-1} t_{D_3}^{-1} t_{Z_3}^{-1} t_{B_3}^{-1}. \label{eq:Rrho^2}
\end{align}

We combine these three relations and then, by commutativity and cancelations, we rearrange the right hand side as follows:
\begin{align*}
    R R^{\rho} R^{\rho^2} &= t_{X_1}^{-1} t_{D_1}^{-1} t_{Z_1}^{-1} t_{B_1}^{-1} t_{X_2} t_{Z_2} t_{B_3} t_{D_3} \cdot t_{B_1} t_{D_1} t_{X_2}^{-1} t_{D_2}^{-1} t_{Z_2}^{-1} t_{B_2}^{-1} t_{X_3} t_{Z_3} \cdot t_{X_1} t_{Z_1} t_{B_2} t_{D_2} t_{X_3}^{-1} t_{D_3}^{-1} t_{Z_3}^{-1} t_{B_3}^{-1} \\
    &= t_{X_1}^{-1} t_{D_1}^{-1} t_{Z_1}^{-1} t_{B_1}^{-1} t_{B_1} t_{D_1} t_{X_1} t_{Z_1} \cdot t_{X_2} t_{Z_2} t_{X_2}^{-1} t_{D_2}^{-1} t_{Z_2}^{-1} t_{B_2}^{-1} t_{B_2} t_{D_2} \cdot t_{B_3} t_{D_3} t_{X_3} t_{Z_3} t_{X_3}^{-1} t_{D_3}^{-1} t_{Z_3}^{-1} t_{B_3}^{-1} \\
    &= t_{X_1}^{-1} t_{D_1}^{-1} t_{Z_1}^{-1} t_{D_1} t_{Z_1} t_{X_1} \cdot t_{Z_2} t_{D_2}^{-1} t_{Z_2}^{-1} t_{D_2} \cdot t_{B_3} t_{D_3} t_{Z_3} t_{D_3}^{-1} t_{Z_3}^{-1} t_{B_3}^{-1} \\
    &= t_{X_1}^{-1} t_{D_1}^{-1} t_{Z_1}^{-1} [t_{D_1}, t_{Z_1}] t_{Z_1} t_{D_1} t_{X_1} \cdot t_{D_2}^{-1} [t_{D_2}, t_{Z_2}] t_{D_2} \cdot t_{B_3} [t_{D_3}, t_{Z_3}] t_{B_3}^{-1} \\
    &= t_{X_1}^{-1} t_{D_1}^{-1} t_{Z_1}^{-1} t_{D_2}^{-1} t_{B_3} \cdot [t_{D_1}, t_{Z_1}]  \cdot  [t_{D_2}, t_{Z_2}]  \cdot  [t_{D_3}, t_{Z_3}] \cdot t_{B_3}^{-1} t_{D_2} t_{Z_1} t_{D_1} t_{X_1}.
\end{align*}
Set $f=t_{B_3}^{-1} t_{D_2} t_{Z_1} t_{D_1} t_{X_1}$ and take the conjugate of the above relation by $f$:
\begin{equation} \label{eq:[D,Z]}
    (R R^{\rho} R^{\rho^2})^f = [t_{D_1}, t_{Z_1}]  \cdot  [t_{D_2}, t_{Z_2}]  \cdot  [t_{D_3}, t_{Z_3}].
\end{equation}
Now, as in Properties~\ref{prop:XZBD-configuration}, choose a diffeomorphism $\mu_i$ supported on $S_i$ that interchanges $D_i$ and $Z_i$ for $i=1,2,3$, and set $\mu = \mu_1 \mu_2 \mu_3$.
Then, by taking the conjugate of the relation~\eqref{eq:[D,Z]} by $\mu$, we obtain
\begin{equation} \label{eq:[Z,D]}
    ((R R^{\rho} R^{\rho^2})^f)^{\mu} = [t_{Z_1}, t_{D_1}]  \cdot  [t_{Z_2}, t_{D_2}]  \cdot  [t_{Z_3}, t_{D_3}].
\end{equation}

Finally, combining the relations~\eqref{eq:[D,Z]} and \eqref{eq:[Z,D]}, we see the commutators cancel out on the right hand side:
\begin{align*}
    (R R^{\rho} R^{\rho^2})^f ((R R^{\rho} R^{\rho^2})^f)^{\mu} &= [t_{D_1}, t_{Z_1}]  \cdot  [t_{D_2}, t_{Z_2}]  \cdot  [t_{D_3}, t_{Z_3}] \cdot [t_{Z_1}, t_{D_1}]  \cdot  [t_{Z_2}, t_{D_2}]  \cdot  [t_{Z_3}, t_{D_3}] \\
    &= [t_{D_1}, t_{Z_1}] [t_{Z_1}, t_{D_1}] \cdot  [t_{D_2}, t_{Z_2}] [t_{Z_2}, t_{D_2}] \cdot  [t_{D_3}, t_{Z_3}] [t_{Z_3}, t_{D_3}] \\
    &= 1.
\end{align*}
Therefore, we obtain a positive factorization
\begin{equation} \label{eq:genus7signature0LF_schematic}
    (R R^{\rho} R^{\rho^2})^f ((R R^{\rho} R^{\rho^2})^f)^{\mu} = 1.
\end{equation}
Since $R$ consists of eight Dehn twists, the left hand side contains $48$ Dehn twists.
For the signature, the genus--$3$ relation~\eqref{eq:SmithLP_BZDX}
is obtained from two $2$--chain and fourteen lantern relations, whose
Endo--Nagami contributions are $-7$ and $+1$, respectively
\cite{endo-nagami:signature,baykur-hamada:Lefschetz}.
We use six embedded conjugates of this relation. The remaining
operations are conjugations, Hurwitz moves, commutations, and
cancellations, which do not change the signature count. Thus the
signature of~\eqref{eq:genus7signature0LF_schematic} is
$12(-7)+84=0$.
Therefore, the genus--$7$ Lefschetz fibration associated with monodromy factorization~\eqref{eq:genus7signature0LF_schematic} has $48$ critical points and the signature of its total space is $0$.

\noindent
\underline{\it Explicit construction}:
Now we work out an explicit example.
Although the schematic construction above is systematic and theoretically clear, it turns out that the actual vanishing cycles obtained in this way are highly intricate and intertwined.
Consequently, the resulting monodromy factorizations are rather cumbersome to work with and are less amenable to explicit computations (particularly those involving fundamental groups) or to further applications. 
For this reason, we incorporate several ad hoc modifications throughout the construction in order to obtain simpler vanishing cycles and a more tractable monodromy factorization.

Starting with the relation~\eqref{eq:SmithLP}, we apply cyclic permutations and Hurwitz moves to cluster $t_{b} t_{z} t_{d} t_{x}$ as follows:
\begin{align*}
    t_{\delta_1} t_{\delta_2} t_{\delta_3} t_{\delta_4} &= t_{a} t_{a^\prime} t_{x} t_{b} t_{b^\prime} t_{y} t_{c} t_{c^\prime} t_{z} t_{d} t_{d^\prime} t_{w} \\
    &= t_{d^\prime} t_{w} \underline{t_{a} t_{a^\prime}} t_{x} \underline{t_{b} t_{b^\prime}} t_{y} t_{c} t_{c^\prime} t_{z} t_{d} \\
    &= t_{d^\prime} \underline{t_{w} t_{a^\prime}} t_{a} t_{x} t_{b^\prime} t_{b} t_{y} t_{c} t_{c^\prime} t_{z} t_{d} \\
    &= \underline{t_{d^\prime} t_{a^\prime} \DT{t_{a^\prime}^{-1}(w)} t_{a} t_{x}} t_{b^\prime} \underline{t_{b} t_{y} t_{c} t_{c^\prime}} t_{z} t_{d} \\
    &= t_{x} \DT{t_{x}^{-1}(d^\prime)} \DT{t_{x}^{-1}(a^\prime)} \DT{t_{x}^{-1} t_{a^\prime}^{-1}(w)} \DT{t_{x}^{-1}(a)} t_{b^\prime} \DT{t_{b}(y)} \DT{t_{b}(c)} \DT{t_{b}(c^\prime)} t_{b} t_{z} t_{d} \\
    &= \DT{t_{x}^{-1}(d^\prime)} \DT{t_{x}^{-1}(a^\prime)} \DT{t_{x}^{-1} t_{a^\prime}^{-1}(w)} \DT{t_{x}^{-1}(a)} t_{b^\prime} \DT{t_{b}(y)} \DT{t_{b}(c)} \DT{t_{b}(c^\prime)} t_{b} t_{z} t_{d} t_{x}.
\end{align*}

Then we follow the process of obtaining the relation~\eqref{eq:SmithLP_BZDX} from \eqref{eq:SmithLP_bzdx}, which was explained using Figure~\ref{fig:Genus3_BoundingPairs_Modification}, by conjugating the factorization by $t_5 t_4 t_3 t_2 t_1$ and sliding the boundary components.
This results in the following relation
\begin{equation}
    t_{q_1} t_{q_2} t_{q_3} t_{q_4} t_{q_5} t_{q_6} t_{q_7} t_{q_8} t_{B} t_{Z} t_{D} t_{X} = t_{\delta_1} t_{\delta_2} t_{\delta_3} t_{\delta_4},
\end{equation}
Here $q_i$ are, in order, the images of the first eight curves in the last line of the preceding Hurwitz calculation under this change of identification. They are depicted in Figure~\ref{fig:Genus3_OtherVanishingCycles}, together with $B,Z,D,X$ in Figure~\ref{fig:Genus3_BoundingPairs_Configuration}.
\begin{figure}[htbp]
\centering
\includegraphics[height=130pt]{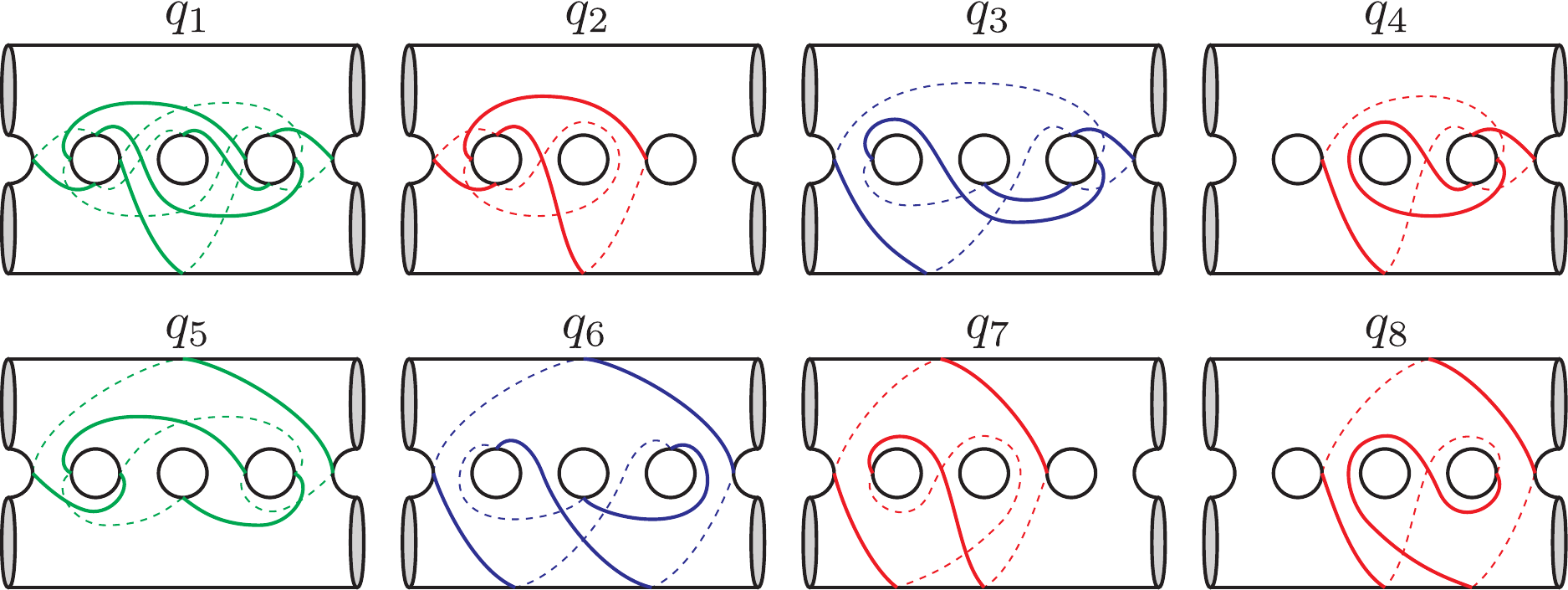}
\caption{Explicit Dehn twist curves for the factorization $Q$ in $\Sigma_3^4$.}
\label{fig:Genus3_OtherVanishingCycles}
\end{figure}

Next, we embed $\Sigma_3^4$ into $\Sigma_7$ in a way explained earlier, but we also need to specify where the extra curves $q_i$ are mapped, which is described in Figure~\ref{fig:Genus7_Q}.
With these curves, we have the relation
\begin{equation} \label{eq:relation_QBZDX_genus7}
    t_{q_1} t_{q_2} t_{q_3} t_{q_4} t_{q_5} t_{q_6} t_{q_7} t_{q_8} t_{B_1} t_{Z_1} t_{D_1} t_{X_1} = t_{a_3} t_{b_3} t_{a_2} t_{b_2}
\end{equation}
in $\M(\Sigma_7)$.
\begin{figure}[htbp]
\centering
\includegraphics[height=180pt]{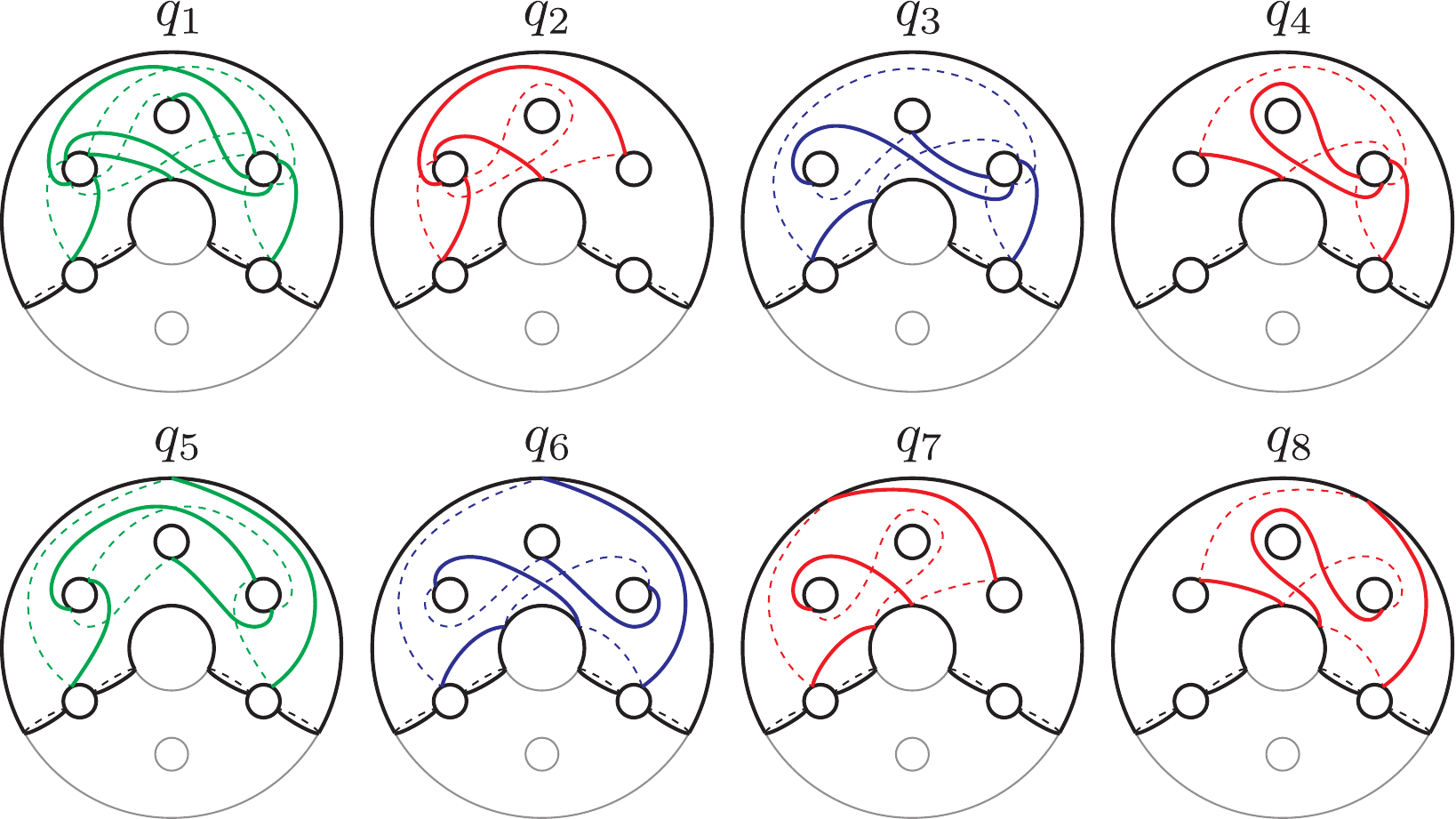}
\caption{Explicit Dehn twist curves for the factorization $Q$ in $\Sigma_7$.}
\label{fig:Genus7_Q}
\end{figure}
We choose $\xi_2$ and $\eta_3$ as $\xi_2=t_{c_2}^{-1} t_{d_2}^{-1} t_{e_2}^{-1}$ and $\eta_3=t_{e_3}^{-1}$ with the Dehn twist curves given in Figure~\ref{fig:Genus7_AuxiliaryCurves} and take the conjugate of the relation~\eqref{eq:relation_QBZDX_genus7} by $\eta_3\xi_2$ to obtain
\begin{equation}
    t_{r_1} t_{r_2} t_{r_3} t_{r_4} t_{r_5} t_{r_6} t_{r_7} t_{r_8} t_{B_1} t_{Z_1} t_{D_1} t_{X_1} = t_{B_3} t_{D_3} t_{X_2} t_{Z_2},
\end{equation}
where $r_i$ is the image of $q_i$ under $\eta_3\xi_2$ ($i=1,\dots,8$); the resulting curves are illustrated in Figure~\ref{fig:Genus7_R}.
\begin{figure}[htbp]
\centering
\includegraphics[height=110pt]{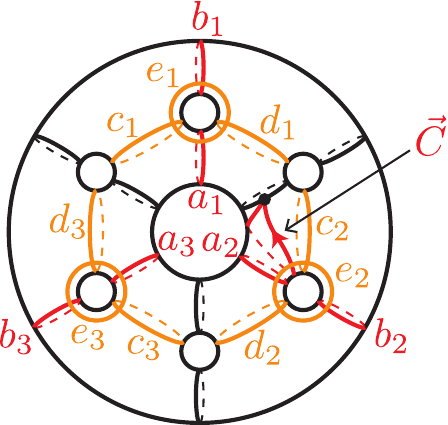}
\caption{The auxiliary curves. The base point and the based loop $\vec{C}$ are used later in Subsection~\ref{subsec:genus7-pi1}.}
\label{fig:Genus7_AuxiliaryCurves}
\end{figure}
\begin{figure}[htbp]
\centering
\includegraphics[height=180pt]{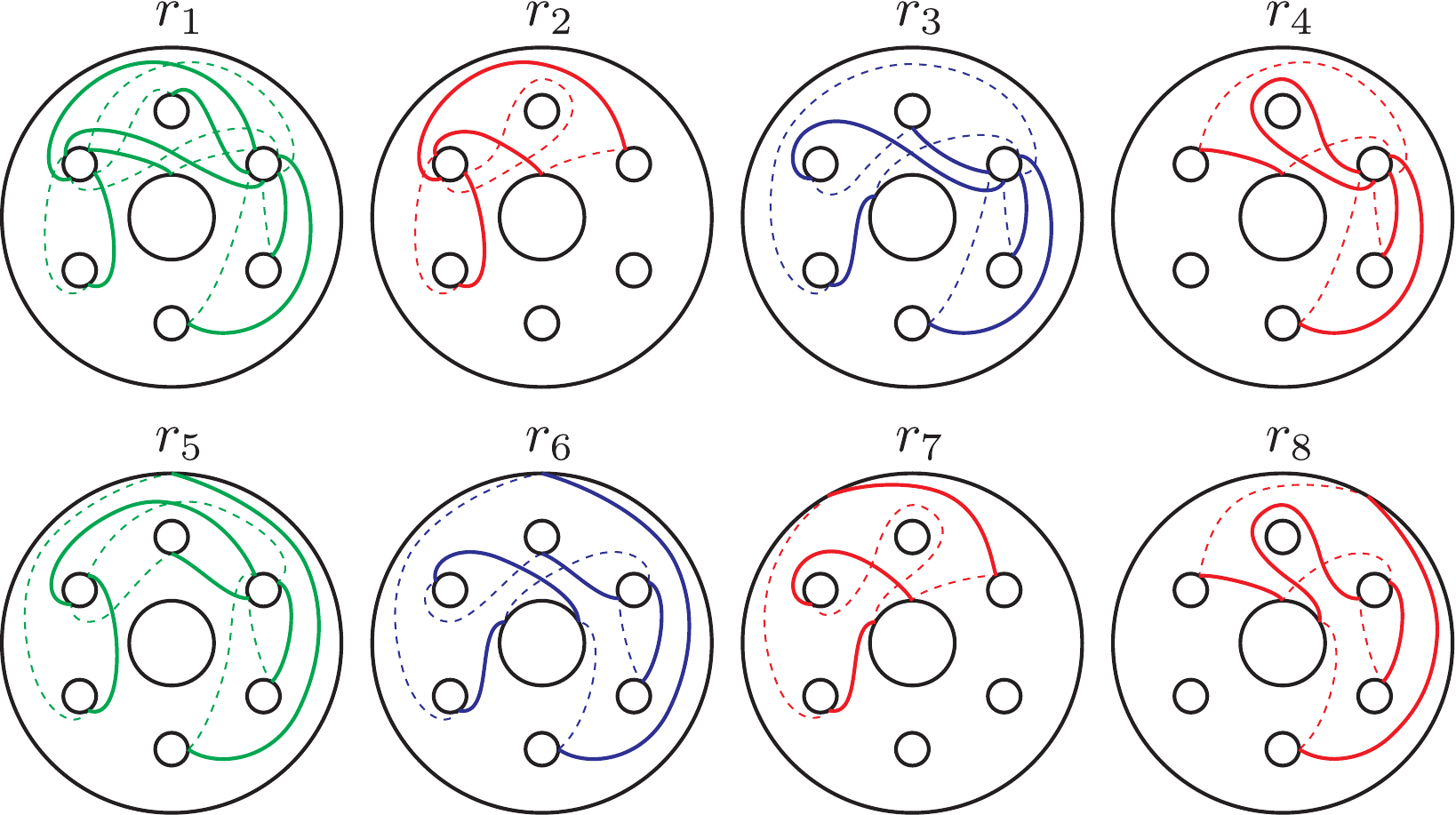}
\caption{Explicit Dehn twist curves for the factorization $R$ in $\Sigma_7$.}
\label{fig:Genus7_R}
\end{figure}
Thus, with $R=t_{r_1} t_{r_2} t_{r_3} t_{r_4} t_{r_5} t_{r_6} t_{r_7} t_{r_8}$, we have the relations~\eqref{eq:R}, \eqref{eq:Rrho}, and \eqref{eq:Rrho^2}.
Furthermore, notice that we can conjugate these relations freely by $t_{u_i}$ and $t_{v_i}$ without changing the right-hand sides as $u_i$ and $v_i$ have no intersections with $X_j$, $D_j$, $Z_j$, $B_j$.
In particular, we choose the following conjugations:
\begin{align}
    R &= t_{X_1}^{-1} t_{D_1}^{-1} t_{Z_1}^{-1} t_{B_1}^{-1} t_{X_2} t_{Z_2} t_{B_3} t_{D_3}, \label{eq:R-0} \\ 
    (R^{\rho})^{t_{u_3}} &= t_{B_1} t_{D_1} t_{X_2}^{-1} t_{D_2}^{-1} t_{Z_2}^{-1} t_{B_2}^{-1} t_{X_3} t_{Z_3}, \label{eq:Rrho-1} \\ 
    (R^{\rho^2})^{t_{u_1} t_{u_3}} &= t_{X_1} t_{Z_1} t_{B_2} t_{D_2} t_{X_3}^{-1} t_{D_3}^{-1} t_{Z_3}^{-1} t_{B_3}^{-1}, \label{eq:Rrho^2-1} \\
    (R^{\rho})^{t_{v_3} t_{u_3}} &= t_{B_1} t_{D_1} t_{X_2}^{-1} t_{D_2}^{-1} t_{Z_2}^{-1} t_{B_2}^{-1} t_{X_3} t_{Z_3}, \label{eq:Rrho-2} \\ 
    (R^{\rho^2})^{t_{v_1} t_{u_1} t_{v_3} t_{u_3}} &= t_{X_1} t_{Z_1} t_{B_2} t_{D_2} t_{X_3}^{-1} t_{D_3}^{-1} t_{Z_3}^{-1} t_{B_3}^{-1}. \label{eq:Rrho^2-2} 
\end{align}
We take two combinations of these relations to produce the following.
\begin{align}
    R (R^{\rho})^{t_{u_3}} (R^{\rho^2})^{t_{u_1} t_{u_3}} &= t_{X_1}^{-1} t_{D_1}^{-1} t_{Z_1}^{-1} t_{D_1} t_{Z_1} t_{X_1} \cdot t_{Z_2} t_{D_2}^{-1} t_{Z_2}^{-1} t_{D_2} \cdot t_{B_3} t_{D_3} t_{Z_3} t_{D_3}^{-1} t_{Z_3}^{-1} t_{B_3}^{-1}, \\
    R (R^{\rho})^{t_{v_3} t_{u_3}} (R^{\rho^2})^{t_{v_1} t_{u_1} t_{v_3} t_{u_3}} &= t_{X_1}^{-1} t_{D_1}^{-1} t_{Z_1}^{-1} t_{D_1} t_{Z_1} t_{X_1} \cdot t_{Z_2} t_{D_2}^{-1} t_{Z_2}^{-1} t_{D_2} \cdot t_{B_3} t_{D_3} t_{Z_3} t_{D_3}^{-1} t_{Z_3}^{-1} t_{B_3}^{-1}.
\end{align}
Let $\mu_i=t_{D_i}t_{c_i}t_{d_i}$, which is supported on $S_i$ and interchanges $Z_i$ and $D_i$, as observed in Properties~\ref{prop:XZBD-configuration}.
We set
\begin{align*}
    \varphi_1 &= t_{X_1}, & \varphi_2 &= t_{a_2}^{-1} t_{b_2}^{-1} t_{c_2} t_{d_2} t_{Z_2}^{-1}, & \varphi_3 &= t_{B_3}^{-1}, \\
    \psi_1 &= \mu_1 t_{X_1}, & \psi_2 &= t_{a_2}^{-1} t_{b_2}^{-1} t_{c_2} t_{d_2} \mu_2 t_{Z_2}^{-1}, & \psi_3 &= \mu_3^{-1} t_{B_3}^{-1},
\end{align*}
and $\varphi = \varphi_1 \varphi_2 \varphi_3$ and $\psi = \psi_1 \psi_2 \psi_3$.
Then, it is straightforward to see
\begin{align}
    (R (R^{\rho})^{t_{u_3}} (R^{\rho^2})^{t_{u_1} t_{u_3}})^{\varphi} &= [t_{D_1}^{-1}, t_{Z_1}^{-1}] \cdot t_{a_2}^{-1} t_{b_2}^{-1} t_{c_2} t_{d_2} [t_{D_2}^{-1},t_{Z_2}^{-1}] t_{d_2}^{-1} t_{c_2}^{-1} t_{b_2} t_{a_2} \cdot [t_{D_3}, t_{Z_3}], \label{eq:[D,Z]-variant} \\
    (R (R^{\rho})^{t_{v_3} t_{u_3}} (R^{\rho^2})^{t_{v_1} t_{u_1} t_{v_3} t_{u_3}})^{\psi} &= [t_{Z_1}^{-1}, t_{D_1}^{-1}] \cdot t_{a_2}^{-1} t_{b_2}^{-1} t_{c_2} t_{d_2} [t_{Z_2}^{-1},t_{D_2}^{-1}] t_{d_2}^{-1} t_{c_2}^{-1} t_{b_2} t_{a_2} \cdot [t_{Z_3}, t_{D_3}]. \label{eq:[Z,D]-variant} 
\end{align}
We combine the two relations and see that the right-hand side cancels out similarly as before.
Therefore, we obtain
\begin{equation} \label{eq:Genus7_Signature0LF_Subwords}
    (R (R^{\rho})^{t_{u_3}} (R^{\rho^2})^{t_{u_1} t_{u_3}})^{\varphi} (R (R^{\rho})^{t_{v_3} t_{u_3}} (R^{\rho^2})^{t_{v_1} t_{u_1} t_{v_3} t_{u_3}})^{\psi} =1.
\end{equation}
Reading off the factorization, we set, for $i=1,\dots,8$,
\begin{align*}
    x_{i} &= \varphi(r_i), & x_{8+i} &= \varphi \, t_{u_3} \, \rho (r_i), & x_{16+i} &= \varphi \, t_{u_1} t_{u_3} \, \rho^2(r_i), \\
    x_{24+i} &= \psi (r_i), & x_{32+i} &= \psi \, t_{v_3} t_{u_3} \, \rho(r_i), & x_{40+i} &= \psi \, t_{v_1} t_{u_1} t_{v_3} t_{u_3} \, \rho^2(r_i).
\end{align*}
Namely, we have the explicit monodromy factorization
\begin{align} \label{eq:Genus7_Signature0LF}
t_{x_1} t_{x_2} \cdots t_{x_{48}} =1,
\end{align}
where the resulting vanishing cycles are illustrated in Figure~\ref{fig:Genus7_SignatureZeroLF_VanishingCycles}.
All these curves are essential: the original vanishing cycles are
nonseparating in $\Sigma_3^4$, and this remains true under the
embeddings and subsequent diffeomorphisms. As above, the signature
count uses six copies of the same genus--$3$ relation. The extra
conjugations do not change this count, so this explicit fibration also
has $48$ critical points and signature $0$.
\begin{figure}[htbp]
\centering
\includegraphics[height=575pt]{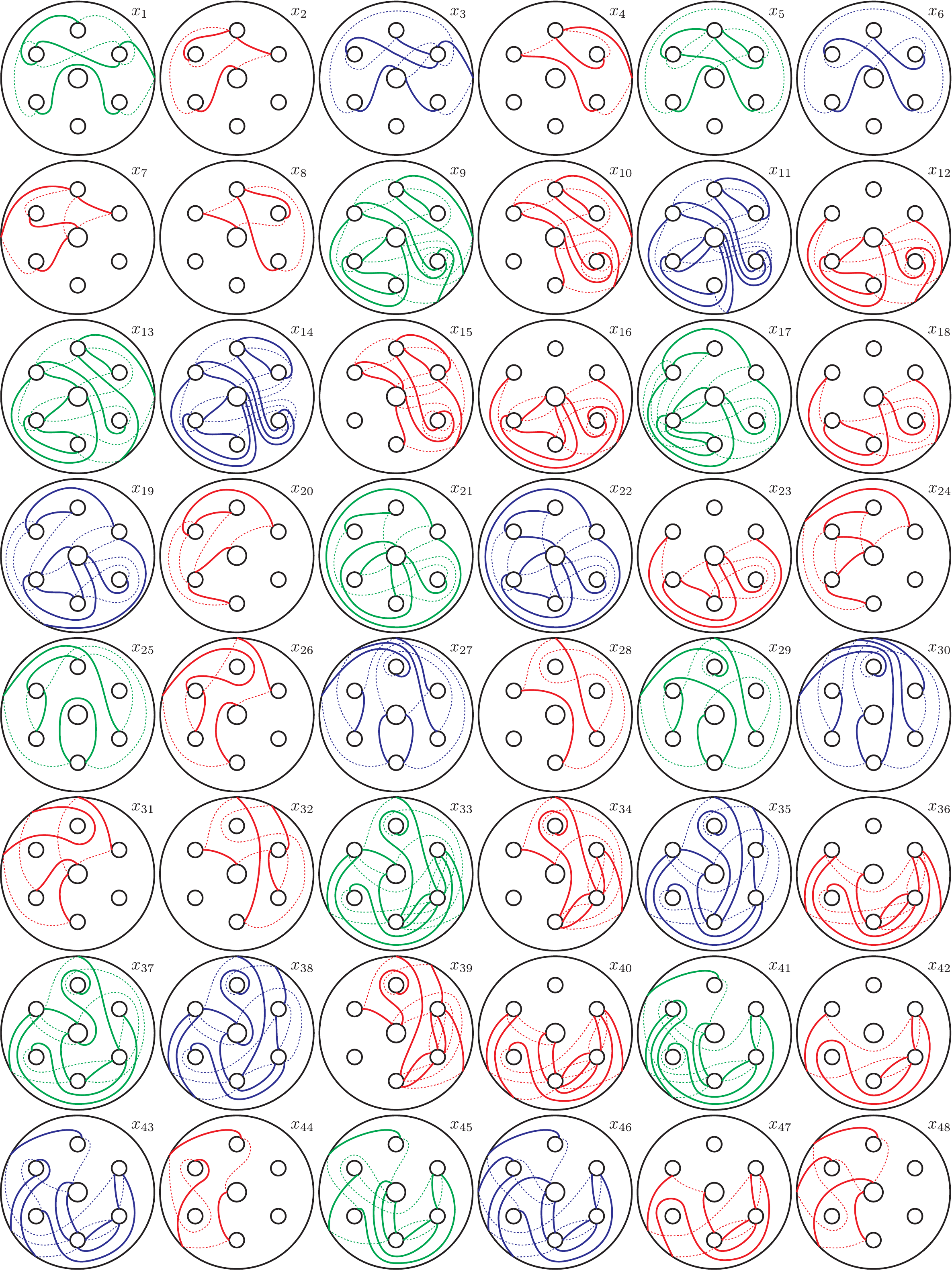}
\caption{Vanishing cycles of the genus--$7$ Lefschetz fibration with signature zero.}
\label{fig:Genus7_SignatureZeroLF_VanishingCycles}
\end{figure}
Note that, to simplify the computation of $x_i$, one can use expressions for $\varphi$ and $\psi$ in terms of $a_i, b_i, c_i, d_i, e_i$ as follows.
Observe first that $t_{B_i} = t_{a_i} t_{e_i} t_{a_i}^{-1}$, $t_{D_i} = t_{b_i} t_{e_i} t_{b_i}^{-1}$, $t_{X_i} = t_{c_i}^{-1} t_{d_i}^{-1} t_{a_i} t_{e_i} t_{a_i}^{-1} t_{d_i} t_{c_i}$, and $t_{Z_i} = t_{c_i}^{-1} t_{d_i}^{-1} t_{b_i} t_{e_i} t_{b_i}^{-1} t_{d_i} t_{c_i}$.
Using these identities and the disjointness of $a_i,b_i,c_i,d_i$, we obtain
\begin{align*}
    \varphi &= t_{c_1}^{-1} t_{d_1}^{-1} t_{a_1} t_{e_1} t_{a_1}^{-1} t_{d_1} t_{c_1} \cdot t_{a_2}^{-1} t_{e_2}^{-1} t_{c_2} t_{d_2} t_{b_2}^{-1} \cdot t_{a_3} t_{e_3}^{-1} t_{a_3}^{-1}, \\
    \psi &= t_{e_1}^{-1} t_{a_1} t_{b_1} t_{e_1} t_{c_1} t_{d_1} \cdot t_{a_2}^{-1} t_{b_2}^{-1} t_{c_2}^2 t_{d_2}^2 \cdot t_{c_3}^{-1} t_{d_3}^{-1} t_{e_3}^{-1} t_{a_3}^{-1} t_{b_3}^{-1} t_{e_3}.
\end{align*}

\subsection{Higher genera examples}\label{subsec:higher-genera}  
%======================================================

Here, we outline the schematic construction of a genus--$g=2h+1$ Lefschetz fibration with signature $0$ for $h \geq 3$, generalizing the genus--$7$ case.
Consider, in Figure~\ref{fig:EmbeddingGenus3IntoGenus2h+1}, the subsurface $S_1$ of $\Sigma_{2h+1}$ bounded by $\{ u_1, v_1, u_2, v_2 \}$, and the clockwise $\frac{2\pi}{h}$-rotation $\rho$.
We set pairs of curves $(X_1,Z_1)$ and $(B_1,D_1)$ on $S_1$ so that the configuration of $(u_1,v_1,u_2,v_2,X_1,Z_1,B_1,D_1)$ is topologically equivalent to that of $(u_1,v_1,u_2,v_2,X,Z,B,D)$ on $S$ in $\Sigma_3^4$ in Figure~\ref{fig:Genus3_BoundingPairs_Configuration}.
We also include the corresponding curves $a_1,b_1$ on $S_1$. We define $S_i$ and $u_i, v_i, a_i, b_i, X_i, Z_i, B_i, D_i$ as the circular images of $S_1$ and $u_1, v_1, a_1, b_1, X_1, Z_1, B_1, D_1$ under the rotation $\rho^{i-1}$ for $i=1,\dots,h$.

\begin{figure}[htbp]
\centering
\includegraphics[height=130pt]{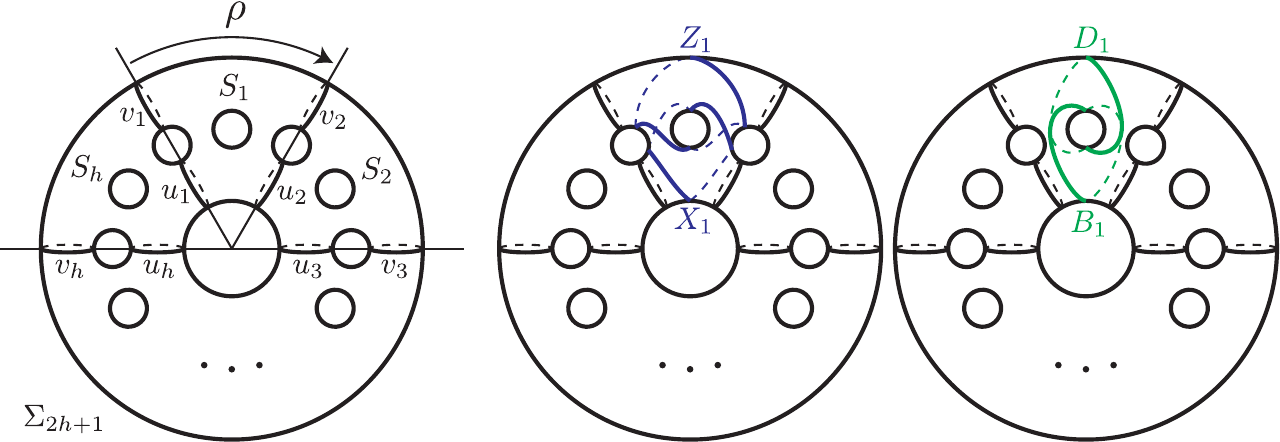}
\caption{The configuration for circular copies of the genus--$3$ relation.}
\label{fig:EmbeddingGenus3IntoGenus2h+1}
\end{figure}
The $S_i$ have disjoint interiors, and the supported maps are again
chosen to be the identity on boundary collars. For each $i$, embed
$\Sigma_3^4$ so that $S$ maps to $S_i$ and its four boundary curves
map to $a_{i-1},b_{i-1},a_{i+1},b_{i+1}$, with indices modulo $h$.
The three sectors involved are distinct because $h\geq3$.
Conjugating by $\eta_{i-1}\xi_{i+1}$, as in the genus--$7$ construction,
realizes copies of the genus--$3$ relation as follows.
\begin{align}
    R \, t_{B_1} t_{Z_1} t_{D_1} t_{X_1} &= t_{B_h} t_{D_h} t_{X_2} t_{Z_2}, \\
    R^{\rho} \, t_{B_2} t_{Z_2} t_{D_2} t_{X_2} &= t_{B_1} t_{D_1} t_{X_3} t_{Z_3}, \\
    &\vdots \notag \\
    R^{\rho^{h-1}} \, t_{B_h} t_{Z_h} t_{D_h} t_{X_h} &= t_{B_{h-1}} t_{D_{h-1}} t_{X_1} t_{Z_1}.
\end{align}
These can collectively be rewritten as
\begin{equation}
    R^{\rho^{i-1}} = t_{B_{i-1}} t_{D_{i-1}} t_{X_i}^{-1} t_{D_i}^{-1} t_{Z_i}^{-1} t_{B_i}^{-1} t_{Z_{i+1}} t_{X_{i+1}},
\end{equation}
for $i=1, \dots, h$, where the subscripts are taken modulo $h$.
Next, we take their product:
\begin{align*}
    R R^{\rho} \cdots R^{\rho^{h-1}} &= \underline{t_{B_{h}} t_{D_{h}}} t_{X_1}^{-1} t_{D_1}^{-1} t_{Z_1}^{-1} t_{B_1}^{-1} \underline{t_{Z_{2}} t_{X_{2}}} \\
    & \quad \underline{\cdot t_{B_{1}} t_{D_{1}}} t_{X_2}^{-1} t_{D_2}^{-1} t_{Z_2}^{-1} t_{B_2}^{-1} \underline{t_{Z_{3}} t_{X_{3}}} \\
    & \qquad \vdots \\
    & \quad \underline{\cdot t_{B_{h-1}} t_{D_{h-1}}} t_{X_h}^{-1} t_{D_h}^{-1} t_{Z_h}^{-1} t_{B_h}^{-1} \underline{t_{Z_{1}} t_{X_{1}}}.
\end{align*}
Conjugating both sides by $f_1 := t_{B_h}^{-1}t_{D_h}^{-1} t_{Z_1}t_{X_1}$ and using commutativity, we get
\begin{align*}
    (R R^{\rho} \cdots R^{\rho^{h-1}})^{f_1}
    &=  t_{Z_{1}} t_{X_{1}} t_{X_1}^{-1} t_{D_1}^{-1} t_{Z_1}^{-1} t_{B_1}^{-1} t_{B_{1}} t_{D_{1}} \\
    & \quad \cdot t_{Z_{2}} t_{X_{2}} t_{X_2}^{-1} t_{D_2}^{-1} t_{Z_2}^{-1} t_{B_2}^{-1} t_{B_{2}} t_{D_{2}} \\
    & \qquad \vdots \\
    & \quad \cdot t_{Z_{h}} t_{X_{h}} t_{X_h}^{-1} t_{D_h}^{-1} t_{Z_h}^{-1} t_{B_h}^{-1} t_{B_{h}} t_{D_{h}} \\
    &= t_{Z_{1}} t_{D_1}^{-1} t_{Z_1}^{-1} t_{D_{1}} \cdot t_{Z_{2}} t_{D_2}^{-1} t_{Z_2}^{-1} t_{D_{2}} \cdots t_{Z_{h}} t_{D_h}^{-1} t_{Z_h}^{-1} t_{D_{h}}.
\end{align*}
We further conjugate both sides by $f_2 := t_{D_1} t_{D_2} \cdots t_{D_h}$ to obtain 
\begin{align} \label{eq:[D,Z]_genus2h+1}
    ((R R^{\rho} \cdots R^{\rho^{h-1}})^{f_1})^{f_2} &= t_{D_{1}} t_{Z_{1}} t_{D_1}^{-1} t_{Z_1}^{-1} \cdot t_{D_{2}} t_{Z_{2}} t_{D_2}^{-1} t_{Z_2}^{-1} \cdots t_{D_{h}} t_{Z_{h}} t_{D_h}^{-1} t_{Z_h}^{-1} \\
    &= [t_{D_1},t_{Z_1}] \cdot [t_{D_2},t_{Z_2}] \cdots [t_{D_h},t_{Z_h}]. \notag
\end{align}
Now, pick a diffeomorphism $\mu_i$ supported on $S_i$ that interchanges $Z_i$ and $D_i$, as in Properties~\ref{prop:XZBD-configuration}(3), and set $\mu=\mu_1 \mu_2 \cdots \mu_h$.
We take, for the last time, the conjugate of \eqref{eq:[D,Z]_genus2h+1} by $\mu$, resulting in
\begin{equation} \label{eq:[Z,D]_genus2h+1}
    (((R R^{\rho} \cdots R^{\rho^{h-1}})^{f_1})^{f_2})^{\mu} = [t_{Z_1},t_{D_1}] \cdot [t_{Z_2},t_{D_2}] \cdots [t_{Z_h},t_{D_h}]. 
\end{equation}
Finally, we combine the relations~\eqref{eq:[D,Z]_genus2h+1} and \eqref{eq:[Z,D]_genus2h+1} to cancel the right hand sides out.
\begin{align*}
    & ((R R^{\rho} \cdots R^{\rho^{h-1}})^{f_1})^{f_2} (((R R^{\rho} \cdots R^{\rho^{h-1}})^{f_1})^{f_2})^{\mu} \\ &= [t_{D_1},t_{Z_1}] \cdot [t_{D_2},t_{Z_2}] \cdots [t_{D_h},t_{Z_h}] \cdot [t_{Z_1},t_{D_1}] \cdot [t_{Z_2},t_{D_2}] \cdots [t_{Z_h},t_{D_h}] \notag \\
    &= [t_{D_1},t_{Z_1}] [t_{Z_1},t_{D_1}] \cdot [t_{D_2},t_{Z_2}] [t_{Z_2},t_{D_2}] \cdots [t_{D_h},t_{Z_h}] [t_{Z_h},t_{D_h}] \notag \\
    &=1. \notag
\end{align*}
The resulting positive factorization
\begin{equation}
    ((R R^{\rho} \cdots R^{\rho^{h-1}})^{f_1})^{f_2} (((R R^{\rho} \cdots R^{\rho^{h-1}})^{f_1})^{f_2})^{\mu} = 1
\end{equation}
consists of $16h$ Dehn twists. These remain nonseparating under the
embeddings, so the resulting fibration is relatively minimal. There
are $2h$ copies of the genus--$3$ relation~\eqref{eq:SmithLP_BZDX};
the same signature calculation now gives $4h(-7)+28h=0$.

\begin{remark}
The novelty of the above constructions lies in expressing the mapping class $[t_{D_1},t_{Z_1}]\allowbreak \cdots [t_{D_h},t_{Z_h}]$ as a product of positive Dehn twists, together with the observation that $[t_{D_1},t_{Z_1}]\allowbreak \cdots [t_{D_h},t_{Z_h}]$ is conjugate to its inverse $[t_{Z_1},t_{D_1}]\allowbreak \cdots [t_{Z_h},t_{D_h}]$.
This suggests a potentially useful approach to constructing positive factorizations of the identity.
Namely, one seeks a positive factorization representing a mapping class $\varphi$ that is conjugate to its inverse $\varphi^{-1}$.
A positive factorization of $\varphi^{-1}$ is then obtained by conjugation, and the product of the two gives a positive factorization of the identity.
\end{remark}

\subsection{A separate genus--$9$ fibration}  
\label{subsec:genus9}
%======================================================

We begin with a particular genus--$9$ Lefschetz fibration $f\colon X\to S^2$ constructed in \cite{baykur-hamada:Lefschetz}, which will serve as the basis for our obstruction argument. Its vanishing cycles are shown in Figure~\ref{fig:SignatureZeroLF_VanishingCycles}. We also recall a marked-point lift of its monodromy factorization, which determines the associated point-pushing word. The calculation below shows that its abelianization vanishes, and hence that the fibration admits a pseudosection.

\begin{figure}[htbp]
\centering
\includegraphics[height=575pt]{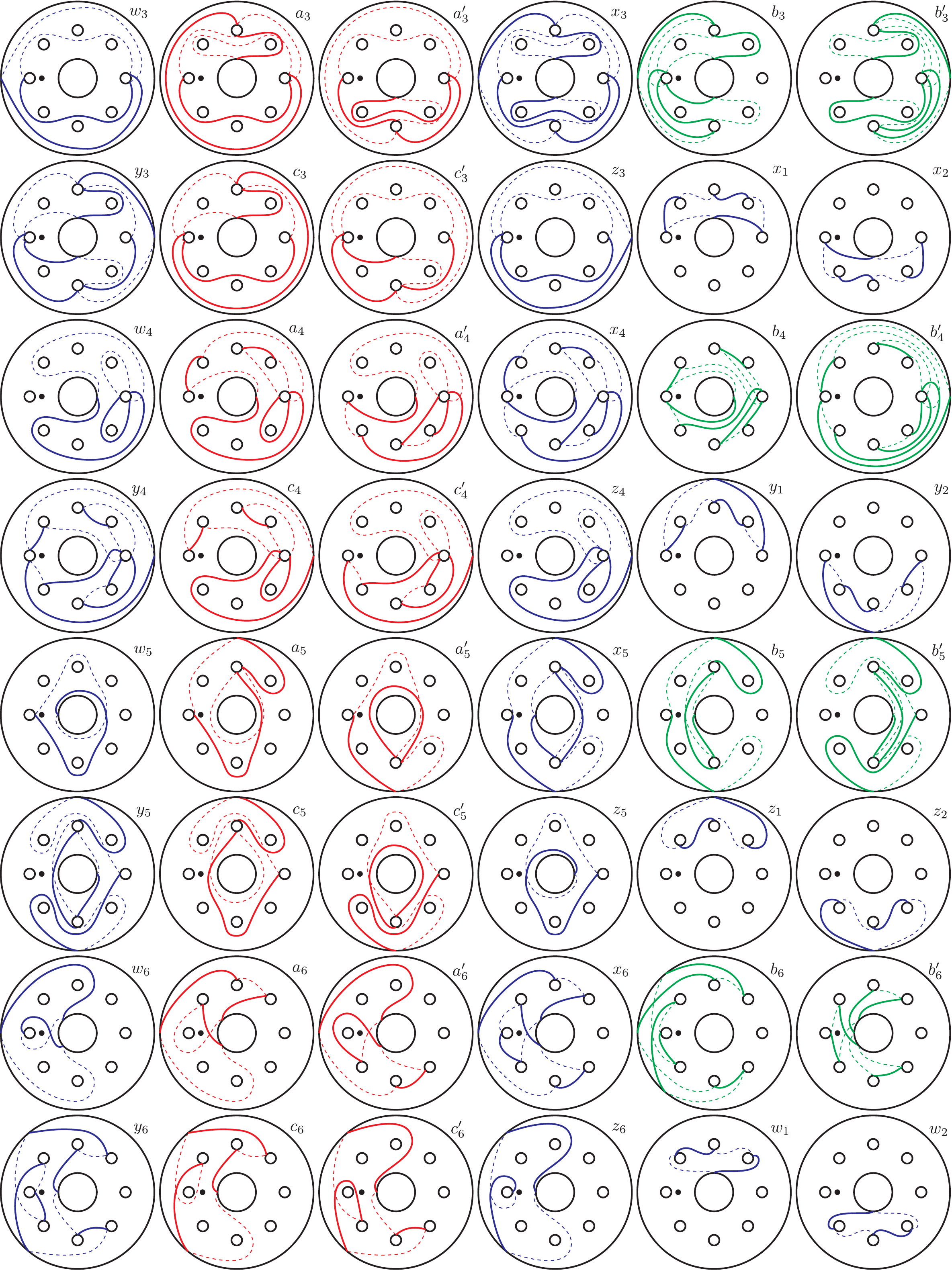}
\caption{Vanishing cycles of the genus--$9$ Lefschetz fibration with signature zero.}
\label{fig:SignatureZeroLF_VanishingCycles}
\end{figure}

\begin{figure}[htbp]
	\centering
	\includegraphics[height=100pt]{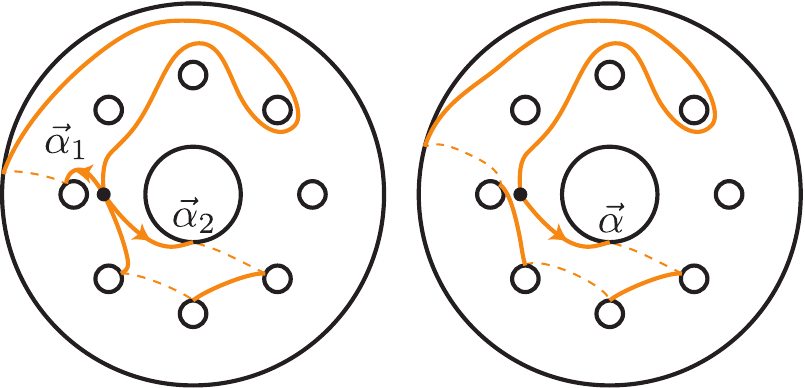}
	\caption{Curves of the point-pushing maps.} 
	\label{fig:Genus9_PushMaps}
\end{figure}

To detect the absence of sections, we consider a marked-point lift of the monodromy factorization of $X$. This lift was constructed in \cite{baykur-hamada:Lefschetz}; we briefly recall the relevant features.

The oriented loops $\vec{\alpha}_1$ and $\vec{\alpha}_2$ for the point-pushing maps are shown in Figure~\ref{fig:Genus9_PushMaps}. Following the construction in \cite{baykur-hamada:Lefschetz}, the marked-point lift contains the subword:
\[
\mathcal{P}_{\vec{\alpha}_1}\cdot \DT{z_1}\DT{z_2}\cdot \mathcal{P}_{\vec{\alpha}_2}.
\]
Here $z_1$ and $z_2$ are disjoint, $\vec{\alpha}_1$ and $z_2$ are disjoint, and $z_1$ and $\vec{\alpha}_2$ are disjoint, so we get
\begin{align*}
\mathcal{P}_{\vec{\alpha}_1} \cdot \DT{z_1} \DT{z_2} \cdot \mathcal{P}_{\vec{\alpha}_2} 
= \mathcal{P}_{\vec{\alpha}_1} \cdot \DT{z_2} \DT{z_1} \cdot \mathcal{P}_{\vec{\alpha}_2} 
= \DT{z_2} \cdot \mathcal{P}_{\vec{\alpha}_1} \mathcal{P}_{\vec{\alpha}_2} \cdot \DT{z_1}.
\end{align*}
The product $\mathcal{P}_{\vec{\alpha}_1}\mathcal{P}_{\vec{\alpha}_2}$ is itself a point-pushing map, namely the point-pushing map along a loop $\vec{\alpha}$ homotopic to the concatenation $\vec{\alpha}_2\cdot\vec{\alpha}_1$.

Replacing the subword $\mathcal{P}_{\vec{\alpha}_1}\DT{z_1}\DT{z_2}\mathcal{P}_{\vec{\alpha}_2}$ by $\DT{z_2}\mathcal{P}_{\vec{\alpha}}\DT{z_1}$, we obtain the factorization 
\begin{align} \label{eq:Signature0LF_Explicit}
 \DT{w_3} \DT{a_3} \DT{a_3^\prime} \DT{x_3} \DT{b_3} \DT{b_3^\prime} \DT{y_3} \DT{c_3} \DT{c_3^\prime} \DT{z_3} \DT{x_1} \DT{x_2} \cdot
\DT{w_4} \DT{a_4} \DT{a_4^\prime} \DT{x_4} \DT{b_4} \DT{b_4^\prime} \DT{y_4} \DT{c_4} \DT{c_4^\prime} \DT{z_4} \DT{y_1} \DT{y_2}  & \\ 
\notag
\cdot 
\DT{w_5} \DT{a_5} \DT{a_5^\prime} \DT{x_5} \DT{b_5} \DT{b_5^\prime} \DT{y_5} \DT{c_5} \DT{c_5^\prime} \DT{z_5} \cdot \DT{z_2} \cdot \mathcal{P}_{\vec{\alpha}} \cdot \DT{z_1} \cdot
\DT{w_6} \DT{a_6} \DT{a_6^\prime} \DT{x_6} \DT{b_6} \DT{b_6^\prime} \DT{y_6} \DT{c_6} \DT{c_6^\prime} \DT{z_6} \DT{w_1} \DT{w_2}
&= 1,
\end{align}
where all the Dehn twist curves are as in Figure~\ref{fig:SignatureZeroLF_VanishingCycles}.
Forgetting the marked point, and therefore the point-pushing map, recovers the monodromy factorization of the genus--$9$ Lefschetz fibration $X$.

Let $w$ denote the image of $\vec\alpha$ in $G=F/N$.
There is an inversion to keep track of here. Writing the marked
relation~\eqref{eq:Signature0LF_Explicit} as
$U\mathcal P_{\vec\alpha}V=1$, its product of positive pointed
Dehn twists is
\[
UV=U\mathcal P_{\vec\alpha}^{-1}U^{-1}
  =\mathcal P_{U_*(\vec\alpha^{-1})}.
\]
Each twist in $U$ acts trivially on $G$, so
\[
\omega(f)=w^{-1}.
\]
This inversion does not affect the normal closure, order, or vanishing
of the abelianized obstruction. We will see in
Section~\ref{sec:calculations} that the abelianization of $w$ vanishes;
thus $X$ admits a pseudosection.

\subsection{Proof of Theorem~\ref{thm:c}}
\label{subsec:proof-thm3}
%======================================================

We follow the same realization strategy as in
\cite{baykur-hamada:Lefschetz}. The new input here is that the preceding constructions
give signature-zero Lefschetz fibrations in every odd genus $g\geq7$.

Fix an odd integer $g\geq7$. Let
$$
W_g=1
$$
be one of the signature-zero genus--$g$ positive factorizations constructed above, and
let $(Y_g,h_g)$ be the corresponding Lefschetz fibration. Choose a
nonseparating vanishing cycle $a$ of $W_g$. After a cyclic permutation,
we may write
$$
W_g=t_aP_g=1.
$$
Choose diffeomorphisms of $\Sigma_g$ carrying $a$ to nonseparating curves
$a_1,a_2,a_3,a_4$ which bound an embedded four-holed sphere with connected complement. Taking the product of
the corresponding conjugates of $W_g$ and applying Hurwitz moves, we obtain a positive
factorization of the form
$$
t_{a_1}t_{a_2}t_{a_3}t_{a_4}\,Q_g=1.
$$
The four boundary curves $a_1,a_2,a_3,a_4$ support a lantern relation. Replacing
$t_{a_1}t_{a_2}t_{a_3}t_{a_4}$ by the product of the three interior twists gives another
positive factorization
$$
t_xt_yt_z\,Q_g=1.
$$
This is a genus--$g$ Lefschetz fibration whose signature is $1$, because the original
factorization was a fiber sum of signature-zero fibrations and a lantern substitution
changes the signature by $+1$ \cite{endo-nagami:signature}.

Let $(L_g,\ell_g)$ be any standard genus--$g$ Lefschetz fibration with negative
signature, for example the standard hyperelliptic fibration on $\CP\#(4g+5)\CPb$, of signature $-4(g+1)$ \cite{baykur-hamada:Lefschetz}, and write
$$
\sigma(L_g)=-s_g
$$
with $s_g>0$. Given any integer $k$, choose $r\geq1$ so that
$$
n=k+rs_g\geq0.
$$
The fiber sum of $n$ copies of the signature--$1$ fibration above and $r$ copies of
$(L_g,\ell_g)$ has signature
$$
n-rs_g=k.
$$
Thus every integer occurs as the signature of a genus--$g$ Lefschetz fibration over
$S^2$.

Finally, for a fixed $k$, we obtain infinitely many such examples by further fiber
summing with arbitrary numbers of copies of the signature-zero fibration $(Y_g,h_g)$.
This does not change the signature, but it increases the number of critical points, and
therefore the Euler characteristic. Hence the resulting total spaces are pairwise
non-diffeomorphic. All factorizations involved are positive, so the resulting fibrations
are symplectic. This proves Theorem~\ref{thm:c}.\QED

\begin{remark}
One may also obtain spin examples; however, this refinement would additionally require compatible quadratic forms for these new factorizations and compatible fiber-sum gluings, as in \cite[Section~4.2]{baykur-hamada:Lefschetz}. To avoid a lengthy addition, we do not pursue this refinement here.
\end{remark}

\section{Calculations}
\label{sec:calculations}
%======================================================

\subsection{Fundamental group of the genus--$9$ example} 
\label{subsec:genus9-pi1}
%======================================================

We now apply the method developed in Section~\ref{sec:prelim} to obtain an explicit presentation of the fundamental group of $X$. We also record the point-pushing word described in Section~\ref{subsec:genus9}. We use the standard generators $a_i,b_i$ shown in Figure~\ref{fig:Genus9_pi1generators} and write $A_i=a_i^{-1}$ and $B_i=b_i^{-1}$. The point-pushing loop $\vec{\alpha}$ corresponds to the word
\[
w=
A_9B_9a_1b_1A_1B_1a_2b_2A_2B_2a_3b_3A_3B_3a_4b_4A_4
B_5a_6b_6A_6B_7a_8a_9a_1a_2A_1.
\]

\begin{prop}\label{prop:pi1X}
Let $X$ be the genus--$9$ Lefschetz fibration described in Section~\ref{subsec:genus9}. Then $\pi_1(X)$ admits the presentation
\[
\pi_1(X)=
\left\langle a_1,b_1,\ldots,a_9,b_9
\ \middle|
R_0,r_{w_3},r_{a_3},r_{a'_3},\ldots,r_{w_2}, w
\right\rangle .
\]
Here $w$ is the relation imposed by the last $2$--handle.
\end{prop}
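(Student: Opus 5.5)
The presentation follows from Proposition~\ref{prop:pointpush-obstruction}, applied to the marked lift~\eqref{eq:Signature0LF_Explicit}. By that proposition, for $g\geq2$,
\[
\pi_1(X)\cong F/\langle\!\langle c_1,\ldots,c_n,\gamma\rangle\!\rangle_F ,
\]
where $F=\pi_1(\Sigma_9,p)$, the $c_i$ are the vanishing cycles based at $p$, and $\gamma$ is the point-pushing loop of the lifted product.

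\textbf{Step 1: the fiber group.} Take the generators $a_1,b_1,\ldots,a_9,b_9$ of Figure~\ref{fig:Genus9_pi1generators}, based at the marked point. Then $F$ has the single surface relation $R_0$, the product of commutators appropriate to the figure.

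\textbf{Step 2: the vanishing-cycle relators.} For each of the $2$--handles, in the order $w_3,a_3,a_3',\ldots,w_1,w_2$ of~\eqref{eq:Signature0LF_Explicit}, I will read off a word $r_c$ in the generators. To do this, orient the curve $c$ of Figure~\ref{fig:SignatureZeroLF_VanishingCycles}, connect it to the base point by an arc, and record its signed crossings with the dual arcs of the generators. The normal closure $N$ does not depend on these connecting paths (Lemma~\ref{lem:pointpush-class}). Each $r_c$ is therefore well defined up to conjugation and inversion, which is enough for a presentation.

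\textbf{Step 3: the last $2$--handle.} This is the step needing care. I must show that the point-pushing loop of the lift is $\vec\alpha$, and that $\vec\alpha$ is represented by the stated word $w$.

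For the first part, the computation in Section~\ref{subsec:genus9} gives $\omega(f)=w^{-1}$ after the inversion and conjugation bookkeeping. A relator may be replaced by its inverse or a conjugate without changing the normal closure, so adding $w$ to the relators is equivalent to adding $\omega(f)$.

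For the second part, I will trace $\vec\alpha\simeq\vec\alpha_2\cdot\vec\alpha_1$ in Figure~\ref{fig:Genus9_PushMaps}. I read off the generator crossings of $\vec\alpha_2$ first and then of $\vec\alpha_1$, using the convention $\mathcal P_\alpha\mathcal P_\beta=\mathcal P_{\beta\alpha}$. The result should match $w=A_9B_9a_1b_1\cdots a_2A_1$, possibly after cyclic reduction. The main obstacle is this orientation and order bookkeeping. I would check it by confirming that the word's exponent sums agree with the homology class of $\vec\alpha$ read directly from the figure, and that it is consistent with the vanishing abelianization used later.

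Combining the three steps, the presentation follows. The $3$-- and $4$--handles do not affect $\pi_1$, by the handle count recalled before Proposition~\ref{prop:pointpush-obstruction}.
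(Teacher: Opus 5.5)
Your proposal is correct and follows the paper's argument. The presentation comes from Proposition~\ref{prop:pointpush-obstruction}: $R_0$ is the surface relation, the remaining relators are read off from the vanishing cycles of the lifted factorization~\eqref{eq:Signature0LF_Explicit} by the dual-chart method, and the last $2$--handle contributes $w$. Your Step~3 bookkeeping matches what the paper records in Section~\ref{subsec:genus9}: $\vec\alpha_2$ is traversed before $\vec\alpha_1$, and the twists in $U$ act trivially on $G$, so the pointed loop $U_*(\vec\alpha^{-1})$ and $w^{-1}$ have the same image in $G$. The normal closure is therefore unchanged.
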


The generators are shown in Figure~\ref{fig:Genus9_pi1generators}.
The dual chart on the right is used to read off words as explained in
Appendix~\ref{app:word-reading}.

\begin{figure}[htbp]
	\centering
	\includegraphics[height=180pt]{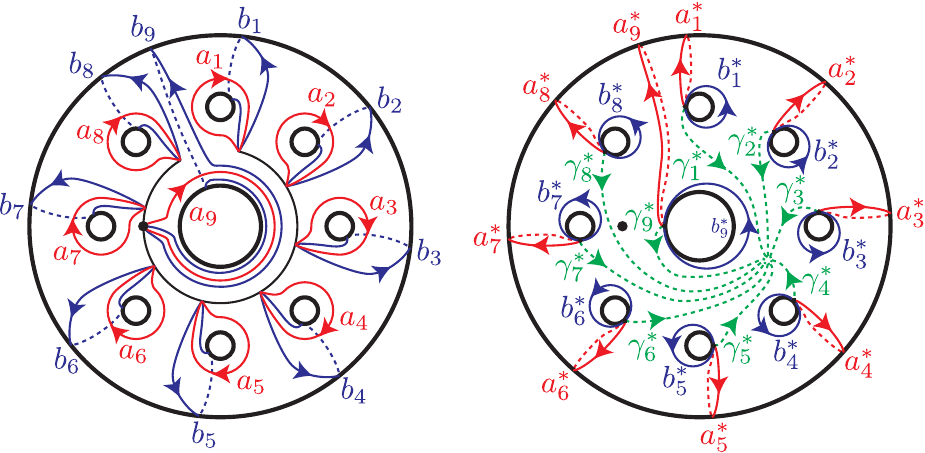}
	\caption{The standard generators of $\pi_1(\Sigma_9)$ (left) and its associated chart (right). Here the arc $\gamma_i^*$ represents $[a_i,b_i]^*$.} 
	\label{fig:Genus9_pi1generators}
\end{figure}

\begingroup
\footnotesize
\setlength{\parindent}{0pt}
\setlength{\parskip}{0.35ex}
\newcommand{\relator}[2]{\noindent $#1 = #2$\par}

\relator{R_0}{a_1\allowbreak b_1\allowbreak A_1\allowbreak B_1\allowbreak a_2\allowbreak b_2\allowbreak A_2\allowbreak B_2\allowbreak a_3\allowbreak b_3\allowbreak A_3\allowbreak B_3\allowbreak a_4\allowbreak b_4\allowbreak A_4\allowbreak B_4\allowbreak a_5\allowbreak b_5\allowbreak A_5\allowbreak B_5\allowbreak a_6\allowbreak b_6\allowbreak A_6\allowbreak B_6\allowbreak a_7\allowbreak b_7\allowbreak A_7\allowbreak B_7\allowbreak a_8\allowbreak b_8\allowbreak A_8\allowbreak B_8\allowbreak a_9\allowbreak b_9\allowbreak A_9\allowbreak B_9}
\relator{r_{w_3}}{a_1\allowbreak a_2\allowbreak a_3\allowbreak b_3\allowbreak a_4\allowbreak a_6\allowbreak a_7\allowbreak B_7\allowbreak a_8\allowbreak a_9\allowbreak a_1\allowbreak b_1\allowbreak A_1\allowbreak B_1\allowbreak a_2\allowbreak a_3\allowbreak b_3\allowbreak a_4\allowbreak a_5\allowbreak a_6\allowbreak a_7\allowbreak a_8\allowbreak a_9}
\relator{r_{a_3}}{a_2\allowbreak B_9\allowbreak A_9\allowbreak b_8\allowbreak a_8\allowbreak B_8\allowbreak a_9\allowbreak a_1\allowbreak b_1\allowbreak A_1\allowbreak B_1\allowbreak a_2\allowbreak a_3\allowbreak b_3\allowbreak a_4\allowbreak a_5\allowbreak a_6\allowbreak a_7\allowbreak a_8\allowbreak a_9\allowbreak a_1\allowbreak B_1\allowbreak a_2\allowbreak a_3\allowbreak b_3\allowbreak a_4\allowbreak a_6\allowbreak a_7\allowbreak B_7\allowbreak a_8\allowbreak a_9\allowbreak a_1\allowbreak b_1\allowbreak A_1}
\relator{r_{a'_3}}{a_1\allowbreak a_2\allowbreak a_3\allowbreak b_3\allowbreak a_4\allowbreak a_6\allowbreak B_9\allowbreak a_1\allowbreak b_1\allowbreak A_1\allowbreak B_1\allowbreak a_2\allowbreak b_2\allowbreak A_2\allowbreak B_2\allowbreak a_3\allowbreak b_3\allowbreak A_3\allowbreak B_3\allowbreak a_4\allowbreak a_5\allowbreak b_5\allowbreak A_5\allowbreak a_6\allowbreak a_7\allowbreak B_7\allowbreak a_8\allowbreak a_9\allowbreak a_1\allowbreak b_1\allowbreak A_1\allowbreak B_1\allowbreak a_2\allowbreak a_3\allowbreak b_3\allowbreak a_4\allowbreak a_5\allowbreak B_5\allowbreak a_6\allowbreak a_7\allowbreak a_8\allowbreak a_9}
\relator{r_{x_3}}{a_2\allowbreak B_9\allowbreak A_9\allowbreak b_8\allowbreak a_8\allowbreak B_8\allowbreak a_9\allowbreak a_1\allowbreak b_1\allowbreak A_1\allowbreak B_1\allowbreak a_2\allowbreak a_3\allowbreak b_3\allowbreak a_4\allowbreak a_5\allowbreak B_5\allowbreak a_6\allowbreak a_7\allowbreak a_8\allowbreak a_9\allowbreak a_1\allowbreak B_1\allowbreak a_2\allowbreak a_3\allowbreak b_3\allowbreak a_4\allowbreak a_6\allowbreak B_9\allowbreak a_1\allowbreak b_1\allowbreak A_1\allowbreak B_1\allowbreak a_2\allowbreak b_2\allowbreak A_2\allowbreak B_2\allowbreak a_3\allowbreak b_3\allowbreak A_3\allowbreak B_3\allowbreak a_4\allowbreak a_5\allowbreak b_5\allowbreak A_5\allowbreak a_6\allowbreak a_7\allowbreak B_7\allowbreak a_8\allowbreak a_9\allowbreak a_1\allowbreak b_1\allowbreak A_1}
\relator{r_{b_3}}{a_2\allowbreak B_9\allowbreak A_9\allowbreak b_8\allowbreak a_8\allowbreak B_8\allowbreak A_8\allowbreak b_7\allowbreak A_7\allowbreak A_6\allowbreak B_5\allowbreak a_6\allowbreak a_7\allowbreak a_8\allowbreak a_9\allowbreak a_1\allowbreak B_1\allowbreak A_1\allowbreak A_9\allowbreak A_8\allowbreak b_7\allowbreak A_7\allowbreak B_9\allowbreak a_1\allowbreak b_1\allowbreak A_1\allowbreak B_1\allowbreak a_2\allowbreak b_2\allowbreak A_2\allowbreak B_2\allowbreak a_3\allowbreak b_3\allowbreak A_3\allowbreak B_3\allowbreak a_4\allowbreak a_5\allowbreak b_5\allowbreak A_5\allowbreak a_6\allowbreak a_7\allowbreak B_7\allowbreak a_8\allowbreak a_9\allowbreak a_1\allowbreak b_1\allowbreak A_1}
\relator{r_{b'_3}}{a_2\allowbreak B_9\allowbreak A_9\allowbreak b_8\allowbreak a_8\allowbreak B_8\allowbreak a_9\allowbreak a_1\allowbreak b_1\allowbreak A_1\allowbreak B_1\allowbreak a_2\allowbreak a_3\allowbreak b_3\allowbreak a_4\allowbreak a_5\allowbreak B_5\allowbreak A_5\allowbreak A_4\allowbreak A_3\allowbreak A_2\allowbreak B_1\allowbreak a_2\allowbreak a_3\allowbreak b_3\allowbreak a_4\allowbreak a_6\allowbreak B_9\allowbreak a_1\allowbreak b_1\allowbreak A_1\allowbreak B_1\allowbreak a_2\allowbreak b_2\allowbreak A_2\allowbreak B_2\allowbreak a_3\allowbreak b_3\allowbreak A_3\allowbreak B_3\allowbreak a_4\allowbreak a_5\allowbreak b_5\allowbreak A_5\allowbreak A_4\allowbreak B_3\allowbreak A_3\allowbreak A_2\allowbreak b_1}
\relator{r_{y_3}}{A_2\allowbreak B_1\allowbreak a_2\allowbreak a_3\allowbreak b_3\allowbreak a_4\allowbreak a_5\allowbreak B_5\allowbreak A_5\allowbreak A_4\allowbreak b_3\allowbreak a_3\allowbreak B_3\allowbreak A_3\allowbreak b_2\allowbreak a_2\allowbreak B_2\allowbreak A_2\allowbreak b_1\allowbreak a_1\allowbreak B_1\allowbreak A_1\allowbreak b_9\allowbreak a_7\allowbreak B_7\allowbreak a_8\allowbreak a_9\allowbreak a_1\allowbreak b_1\allowbreak a_2\allowbreak a_3\allowbreak a_4\allowbreak a_5\allowbreak b_5\allowbreak a_6\allowbreak a_7\allowbreak B_7\allowbreak a_8\allowbreak b_8\allowbreak A_8\allowbreak B_8\allowbreak a_9\allowbreak b_9}
\relator{r_{c_3}}{A_2\allowbreak B_1\allowbreak a_2\allowbreak a_3\allowbreak b_3\allowbreak a_4\allowbreak a_6\allowbreak a_7\allowbreak B_7\allowbreak a_8\allowbreak a_9\allowbreak a_1\allowbreak b_1\allowbreak a_2\allowbreak a_3\allowbreak a_4\allowbreak a_5\allowbreak a_6\allowbreak a_7\allowbreak B_7\allowbreak a_8\allowbreak b_8\allowbreak A_8\allowbreak B_8\allowbreak a_9\allowbreak b_9}
\relator{r_{c'_3}}{a_1\allowbreak a_2\allowbreak a_3\allowbreak a_4\allowbreak a_5\allowbreak b_5\allowbreak a_6\allowbreak a_7\allowbreak B_7\allowbreak a_8\allowbreak a_9\allowbreak a_1\allowbreak b_1\allowbreak A_1\allowbreak B_1\allowbreak a_2\allowbreak a_3\allowbreak b_3\allowbreak a_4\allowbreak a_5\allowbreak B_5\allowbreak A_5\allowbreak A_4\allowbreak b_3\allowbreak a_3\allowbreak B_3\allowbreak A_3\allowbreak b_2\allowbreak a_2\allowbreak B_2\allowbreak A_2\allowbreak b_1\allowbreak a_1\allowbreak B_1\allowbreak A_1\allowbreak b_9\allowbreak a_7\allowbreak B_7\allowbreak a_8\allowbreak a_9}
\relator{r_{z_3}}{a_1\allowbreak a_2\allowbreak a_3\allowbreak a_4\allowbreak a_5\allowbreak a_6\allowbreak a_7\allowbreak B_7\allowbreak a_8\allowbreak a_9\allowbreak a_1\allowbreak b_1\allowbreak A_1\allowbreak B_1\allowbreak a_2\allowbreak a_3\allowbreak b_3\allowbreak a_4\allowbreak a_6\allowbreak a_7\allowbreak B_7\allowbreak a_8\allowbreak a_9}
\relator{r_{x_1}}{a_2\allowbreak B_2\allowbreak a_3\allowbreak b_3\allowbreak A_3\allowbreak B_9\allowbreak A_9\allowbreak b_8\allowbreak a_8\allowbreak B_8\allowbreak A_8\allowbreak b_7\allowbreak a_8\allowbreak B_8\allowbreak a_9\allowbreak a_1\allowbreak b_1\allowbreak A_1}
\relator{r_{x_2}}{B_9\allowbreak a_1\allowbreak b_1\allowbreak A_1\allowbreak B_1\allowbreak a_2\allowbreak b_2\allowbreak A_2\allowbreak B_2\allowbreak a_3\allowbreak b_3\allowbreak A_3\allowbreak a_4\allowbreak B_4\allowbreak a_5\allowbreak b_5\allowbreak A_5\allowbreak a_6\allowbreak B_6\allowbreak a_7\allowbreak b_7\allowbreak A_7}
\relator{r_{w_4}}{B_9\allowbreak A_9\allowbreak b_8\allowbreak a_8\allowbreak B_8\allowbreak a_9\allowbreak a_1\allowbreak a_2\allowbreak a_3\allowbreak b_3\allowbreak a_4\allowbreak a_3\allowbreak B_3\allowbreak A_3\allowbreak b_2\allowbreak a_2\allowbreak B_2\allowbreak a_3\allowbreak b_3\allowbreak a_4\allowbreak a_5\allowbreak a_6}
\relator{r_{a_4}}{B_9\allowbreak A_9\allowbreak b_8\allowbreak a_8\allowbreak B_8\allowbreak A_8\allowbreak b_7\allowbreak a_8\allowbreak B_8\allowbreak a_9\allowbreak a_1\allowbreak a_2\allowbreak a_3\allowbreak b_3\allowbreak a_4\allowbreak a_3\allowbreak B_3\allowbreak A_3\allowbreak b_2\allowbreak a_2\allowbreak B_2\allowbreak A_2\allowbreak b_1\allowbreak a_2\allowbreak B_2\allowbreak a_3\allowbreak b_3\allowbreak a_4\allowbreak a_5\allowbreak a_6}
\relator{r_{a'_4}}{B_9\allowbreak A_9\allowbreak b_8\allowbreak a_8\allowbreak B_8\allowbreak a_9\allowbreak a_1\allowbreak a_2\allowbreak a_3\allowbreak b_3\allowbreak a_4\allowbreak B_4\allowbreak a_5\allowbreak b_5\allowbreak A_5\allowbreak a_3\allowbreak B_3\allowbreak A_3\allowbreak b_2\allowbreak a_2\allowbreak B_2\allowbreak a_3\allowbreak b_3\allowbreak a_4\allowbreak a_5\allowbreak a_6\allowbreak B_6\allowbreak a_7\allowbreak b_7\allowbreak A_7}
\relator{r_{x_4}}{B_9\allowbreak A_9\allowbreak b_8\allowbreak a_8\allowbreak B_8\allowbreak A_8\allowbreak b_7\allowbreak a_8\allowbreak B_8\allowbreak a_9\allowbreak a_1\allowbreak a_2\allowbreak a_3\allowbreak b_3\allowbreak a_4\allowbreak B_4\allowbreak a_5\allowbreak b_5\allowbreak A_5\allowbreak a_3\allowbreak B_3\allowbreak A_3\allowbreak b_2\allowbreak a_2\allowbreak B_2\allowbreak A_2\allowbreak b_1\allowbreak a_2\allowbreak B_2\allowbreak a_3\allowbreak b_3\allowbreak a_4\allowbreak a_5\allowbreak a_6\allowbreak B_6\allowbreak a_7\allowbreak b_7\allowbreak A_7}
\relator{r_{b_4}}{B_9\allowbreak A_9\allowbreak b_8\allowbreak a_8\allowbreak B_8\allowbreak A_8\allowbreak b_7\allowbreak B_8\allowbreak a_9\allowbreak a_1\allowbreak b_1\allowbreak A_1\allowbreak B_1\allowbreak a_2\allowbreak b_2\allowbreak A_2\allowbreak B_2\allowbreak a_3\allowbreak b_3\allowbreak A_3\allowbreak B_4\allowbreak a_5\allowbreak b_5\allowbreak A_5\allowbreak a_3\allowbreak B_3\allowbreak A_3\allowbreak b_2\allowbreak a_2\allowbreak B_2\allowbreak A_2\allowbreak b_1\allowbreak B_2\allowbreak a_3\allowbreak b_3\allowbreak A_3\allowbreak B_6\allowbreak a_7\allowbreak b_7\allowbreak A_7}
\relator{r_{b'_4}}{a_2\allowbreak B_2\allowbreak a_3\allowbreak b_3\allowbreak a_4\allowbreak a_5\allowbreak a_6\allowbreak B_6\allowbreak a_7\allowbreak b_7\allowbreak A_7\allowbreak A_6\allowbreak A_5\allowbreak A_4\allowbreak A_3\allowbreak A_2\allowbreak A_1\allowbreak A_9\allowbreak A_8\allowbreak b_7\allowbreak a_8\allowbreak B_8\allowbreak a_9\allowbreak a_1\allowbreak a_2\allowbreak a_3\allowbreak b_3\allowbreak a_4\allowbreak B_4\allowbreak a_5\allowbreak b_5\allowbreak A_5\allowbreak A_4\allowbreak B_3\allowbreak A_3\allowbreak A_2\allowbreak b_1}
\relator{r_{y_4}}{B_1\allowbreak a_2\allowbreak a_3\allowbreak b_3\allowbreak a_4\allowbreak a_5\allowbreak B_5\allowbreak A_5\allowbreak b_4\allowbreak a_3\allowbreak B_3\allowbreak A_3\allowbreak b_2\allowbreak a_2\allowbreak B_2\allowbreak A_2\allowbreak b_1\allowbreak a_1\allowbreak B_1\allowbreak A_1\allowbreak A_9\allowbreak b_8\allowbreak B_7\allowbreak a_8\allowbreak a_9\allowbreak a_1\allowbreak a_2\allowbreak a_3\allowbreak a_4\allowbreak a_5\allowbreak a_6\allowbreak a_7\allowbreak B_7\allowbreak A_7\allowbreak b_6\allowbreak a_3\allowbreak B_3\allowbreak A_3\allowbreak b_2}
\relator{r_{c_4}}{B_1\allowbreak a_2\allowbreak a_3\allowbreak b_3\allowbreak a_4\allowbreak a_3\allowbreak B_3\allowbreak A_3\allowbreak b_2\allowbreak a_2\allowbreak B_2\allowbreak A_2\allowbreak b_1\allowbreak a_1\allowbreak B_1\allowbreak A_1\allowbreak A_9\allowbreak b_8\allowbreak B_7\allowbreak a_8\allowbreak a_9\allowbreak a_1\allowbreak a_2\allowbreak a_3\allowbreak a_4\allowbreak a_5\allowbreak a_6\allowbreak a_3\allowbreak B_3\allowbreak A_3\allowbreak b_2}
\relator{r_{c'_4}}{a_1\allowbreak a_2\allowbreak a_3\allowbreak a_4\allowbreak a_5\allowbreak a_6\allowbreak a_7\allowbreak B_7\allowbreak A_7\allowbreak b_6\allowbreak a_3\allowbreak B_3\allowbreak A_3\allowbreak b_2\allowbreak a_2\allowbreak B_2\allowbreak a_3\allowbreak b_3\allowbreak a_4\allowbreak a_5\allowbreak B_5\allowbreak A_5\allowbreak b_4\allowbreak a_3\allowbreak B_3\allowbreak A_3\allowbreak b_2\allowbreak a_2\allowbreak B_2\allowbreak A_2\allowbreak b_1\allowbreak a_1\allowbreak B_1\allowbreak A_1\allowbreak A_9\allowbreak b_8\allowbreak a_8\allowbreak B_8\allowbreak a_9}
\relator{r_{z_4}}{a_1\allowbreak a_2\allowbreak a_3\allowbreak a_4\allowbreak a_5\allowbreak a_6\allowbreak a_3\allowbreak B_3\allowbreak A_3\allowbreak b_2\allowbreak a_2\allowbreak B_2\allowbreak a_3\allowbreak b_3\allowbreak a_4\allowbreak a_3\allowbreak B_3\allowbreak A_3\allowbreak b_2\allowbreak a_2\allowbreak B_2\allowbreak A_2\allowbreak b_1\allowbreak a_1\allowbreak B_1\allowbreak A_1\allowbreak A_9\allowbreak b_8\allowbreak a_8\allowbreak B_8\allowbreak a_9}
\relator{r_{y_1}}{B_1\allowbreak A_1\allowbreak A_9\allowbreak b_8\allowbreak B_7\allowbreak a_8\allowbreak a_9\allowbreak a_1\allowbreak a_2\allowbreak a_3\allowbreak B_3\allowbreak A_3\allowbreak b_2}
\relator{r_{y_2}}{B_3\allowbreak a_4\allowbreak a_5\allowbreak a_6\allowbreak a_7\allowbreak B_7\allowbreak A_7\allowbreak b_6\allowbreak B_5\allowbreak A_5\allowbreak b_4}
\relator{r_{w_5}}{a_5\allowbreak B_7\allowbreak a_8\allowbreak b_8\allowbreak A_8\allowbreak B_8\allowbreak a_9\allowbreak a_1\allowbreak b_1\allowbreak a_1\allowbreak B_1\allowbreak A_1\allowbreak b_9\allowbreak a_9}
\relator{r_{a_5}}{a_5\allowbreak B_7\allowbreak a_8\allowbreak a_9\allowbreak a_1\allowbreak a_2\allowbreak a_1\allowbreak B_1\allowbreak A_1\allowbreak b_9\allowbreak a_9\allowbreak B_9\allowbreak a_1\allowbreak b_1\allowbreak A_1}
\relator{r_{a'_5}}{a_5\allowbreak B_5\allowbreak A_5\allowbreak b_4\allowbreak a_4\allowbreak B_4\allowbreak a_5\allowbreak a_6\allowbreak B_7\allowbreak a_8\allowbreak b_8\allowbreak A_8\allowbreak B_8\allowbreak a_9\allowbreak a_1\allowbreak a_2\allowbreak b_2\allowbreak A_2\allowbreak B_2\allowbreak a_3\allowbreak b_3\allowbreak A_3\allowbreak B_3\allowbreak a_4\allowbreak b_4\allowbreak A_4\allowbreak B_4\allowbreak a_5\allowbreak b_5\allowbreak A_5\allowbreak a_9}
\relator{r_{x_5}}{a_5\allowbreak B_5\allowbreak A_5\allowbreak b_4\allowbreak a_4\allowbreak B_4\allowbreak a_5\allowbreak a_6\allowbreak B_7\allowbreak a_8\allowbreak a_9\allowbreak a_1\allowbreak a_2\allowbreak B_1\allowbreak a_2\allowbreak b_2\allowbreak A_2\allowbreak B_2\allowbreak a_3\allowbreak b_3\allowbreak A_3\allowbreak B_3\allowbreak a_4\allowbreak b_4\allowbreak A_4\allowbreak B_4\allowbreak a_5\allowbreak b_5\allowbreak A_5\allowbreak a_9\allowbreak B_9\allowbreak a_1\allowbreak b_1\allowbreak A_1}
\relator{r_{b_5}}{B_1\allowbreak A_1\allowbreak A_9\allowbreak b_8\allowbreak a_8\allowbreak B_8\allowbreak A_8\allowbreak b_7\allowbreak a_7\allowbreak B_7\allowbreak A_7\allowbreak b_6\allowbreak a_6\allowbreak B_6\allowbreak A_6\allowbreak b_5\allowbreak a_9\allowbreak B_9\allowbreak a_1\allowbreak b_1\allowbreak A_1\allowbreak A_9\allowbreak B_5\allowbreak A_5\allowbreak b_4\allowbreak a_4\allowbreak B_4\allowbreak a_5\allowbreak a_6\allowbreak B_7\allowbreak a_8\allowbreak a_9\allowbreak a_1\allowbreak a_2}
\relator{r_{b'_5}}{B_9\allowbreak a_1\allowbreak b_1\allowbreak A_1\allowbreak a_5\allowbreak B_5\allowbreak A_5\allowbreak b_4\allowbreak a_4\allowbreak B_4\allowbreak a_5\allowbreak a_6\allowbreak A_5\allowbreak a_3\allowbreak B_3\allowbreak A_3\allowbreak b_2\allowbreak a_2\allowbreak B_2\allowbreak A_2\allowbreak A_1\allowbreak A_9\allowbreak b_8\allowbreak a_8\allowbreak B_8\allowbreak a_9\allowbreak a_1\allowbreak a_2\allowbreak B_1\allowbreak a_2\allowbreak b_2\allowbreak A_2\allowbreak B_2\allowbreak a_3\allowbreak b_3\allowbreak A_3\allowbreak B_3\allowbreak a_4\allowbreak b_4\allowbreak A_4\allowbreak B_4\allowbreak a_5\allowbreak b_5\allowbreak A_5}
\relator{r_{y_5}}{a_1\allowbreak B_1\allowbreak A_1\allowbreak b_9\allowbreak B_5\allowbreak a_6\allowbreak b_6\allowbreak A_6\allowbreak B_6\allowbreak a_7\allowbreak b_7\allowbreak A_7\allowbreak B_7\allowbreak a_8\allowbreak b_8\allowbreak A_8\allowbreak B_8\allowbreak a_9\allowbreak a_1\allowbreak b_1\allowbreak A_2\allowbreak A_1\allowbreak A_9\allowbreak b_8\allowbreak A_8\allowbreak B_8\allowbreak a_9\allowbreak a_1\allowbreak a_2\allowbreak b_2\allowbreak A_2\allowbreak B_2\allowbreak a_3\allowbreak b_3\allowbreak A_3\allowbreak a_5\allowbreak A_6\allowbreak A_5\allowbreak b_4\allowbreak A_4\allowbreak B_4\allowbreak a_5\allowbreak b_5\allowbreak a_9}
\relator{r_{c_5}}{a_1\allowbreak B_1\allowbreak A_1\allowbreak b_9\allowbreak a_9\allowbreak B_9\allowbreak a_1\allowbreak b_1\allowbreak A_2\allowbreak A_1\allowbreak A_9\allowbreak b_8\allowbreak A_8\allowbreak B_8\allowbreak a_9\allowbreak a_1\allowbreak a_2\allowbreak b_2\allowbreak A_2\allowbreak B_2\allowbreak a_3\allowbreak b_3\allowbreak A_3\allowbreak a_5\allowbreak a_9}
\relator{r_{c'_5}}{B_5\allowbreak a_6\allowbreak b_6\allowbreak A_6\allowbreak B_6\allowbreak a_7\allowbreak b_7\allowbreak A_7\allowbreak B_7\allowbreak a_8\allowbreak b_8\allowbreak A_8\allowbreak B_8\allowbreak a_9\allowbreak a_1\allowbreak a_2\allowbreak b_2\allowbreak A_2\allowbreak B_2\allowbreak a_3\allowbreak b_3\allowbreak A_3\allowbreak a_5\allowbreak A_6\allowbreak A_5\allowbreak b_4\allowbreak A_4\allowbreak B_4\allowbreak a_5\allowbreak b_5\allowbreak a_9}
\relator{r_{z_5}}{a_9\allowbreak B_9\allowbreak a_1\allowbreak a_2\allowbreak b_2\allowbreak A_2\allowbreak B_2\allowbreak a_3\allowbreak b_3\allowbreak A_3\allowbreak a_5\allowbreak a_9}
\relator{r_{z_1}}{A_9\allowbreak a_8\allowbreak B_8\allowbreak a_9\allowbreak a_1\allowbreak a_2\allowbreak a_2\allowbreak B_2\allowbreak A_2\allowbreak b_1\allowbreak A_1}
\relator{r_{z_2}}{a_4\allowbreak B_4\allowbreak a_5\allowbreak a_6\allowbreak a_6\allowbreak B_6\allowbreak A_6\allowbreak b_5\allowbreak A_5}
\relator{r_{w_6}}{B_9\allowbreak A_9\allowbreak b_8\allowbreak a_8\allowbreak B_8\allowbreak A_8\allowbreak b_7\allowbreak a_7\allowbreak B_9\allowbreak a_1\allowbreak b_1\allowbreak A_1\allowbreak B_1\allowbreak a_2\allowbreak b_2\allowbreak A_2\allowbreak B_2\allowbreak a_3\allowbreak b_3\allowbreak A_3\allowbreak B_3\allowbreak a_4\allowbreak b_4\allowbreak A_4\allowbreak B_4\allowbreak a_5\allowbreak a_6\allowbreak a_7\allowbreak a_8\allowbreak a_9\allowbreak a_1}
\relator{r_{a_6}}{B_9\allowbreak A_9\allowbreak b_8\allowbreak a_8\allowbreak B_8\allowbreak A_8\allowbreak b_7\allowbreak a_7\allowbreak B_7\allowbreak a_8\allowbreak b_8\allowbreak A_8\allowbreak B_9\allowbreak a_1\allowbreak b_1\allowbreak A_1\allowbreak B_1\allowbreak a_2\allowbreak b_2\allowbreak A_2\allowbreak B_2\allowbreak a_3\allowbreak b_3\allowbreak A_3\allowbreak B_3\allowbreak a_4\allowbreak b_4\allowbreak A_4\allowbreak B_4\allowbreak a_5\allowbreak a_6\allowbreak a_7\allowbreak a_8\allowbreak a_9\allowbreak a_1\allowbreak B_1\allowbreak a_2\allowbreak b_2\allowbreak A_2}
\relator{r_{a'_6}}{B_9\allowbreak A_9\allowbreak b_8\allowbreak a_8\allowbreak B_8\allowbreak A_8\allowbreak b_7\allowbreak a_7\allowbreak B_7\allowbreak A_7\allowbreak b_6\allowbreak a_7\allowbreak B_9\allowbreak a_1\allowbreak b_1\allowbreak A_1\allowbreak B_1\allowbreak a_2\allowbreak b_2\allowbreak A_2\allowbreak B_2\allowbreak a_3\allowbreak b_3\allowbreak A_3\allowbreak B_3\allowbreak a_4\allowbreak b_4\allowbreak A_4\allowbreak a_5\allowbreak B_5\allowbreak a_6\allowbreak a_7\allowbreak a_8\allowbreak a_9\allowbreak a_1}
\relator{r_{x_6}}{B_9\allowbreak A_9\allowbreak b_8\allowbreak a_8\allowbreak B_8\allowbreak A_8\allowbreak b_7\allowbreak a_7\allowbreak B_7\allowbreak A_7\allowbreak b_6\allowbreak a_7\allowbreak B_7\allowbreak a_8\allowbreak b_8\allowbreak A_8\allowbreak B_9\allowbreak a_1\allowbreak b_1\allowbreak A_1\allowbreak B_1\allowbreak a_2\allowbreak b_2\allowbreak A_2\allowbreak B_2\allowbreak a_3\allowbreak b_3\allowbreak A_3\allowbreak B_3\allowbreak a_4\allowbreak b_4\allowbreak A_4\allowbreak a_5\allowbreak B_5\allowbreak a_6\allowbreak a_7\allowbreak a_8\allowbreak a_9\allowbreak a_1\allowbreak B_1\allowbreak a_2\allowbreak b_2\allowbreak A_2}
\relator{r_{b_6}}{A_9\allowbreak A_8\allowbreak A_7\allowbreak A_6\allowbreak b_5\allowbreak A_5\allowbreak B_4\allowbreak a_5\allowbreak a_6\allowbreak a_7\allowbreak a_8\allowbreak B_8\allowbreak A_8\allowbreak b_7\allowbreak A_7\allowbreak B_6\allowbreak a_7\allowbreak a_8\allowbreak a_9\allowbreak a_1\allowbreak a_2\allowbreak B_2\allowbreak A_2\allowbreak b_1\allowbreak A_1}
\relator{r_{b'_6}}{a_2\allowbreak B_2\allowbreak A_2\allowbreak b_1\allowbreak B_9\allowbreak A_9\allowbreak b_8\allowbreak a_8\allowbreak B_8\allowbreak A_8\allowbreak b_7\allowbreak a_7\allowbreak B_7\allowbreak A_7\allowbreak b_6\allowbreak a_6\allowbreak B_6\allowbreak A_6\allowbreak b_5\allowbreak a_4\allowbreak B_4\allowbreak A_4\allowbreak b_3\allowbreak a_3\allowbreak B_3\allowbreak A_3\allowbreak b_2\allowbreak a_2\allowbreak B_2\allowbreak A_2\allowbreak b_1\allowbreak a_1\allowbreak B_1\allowbreak A_1\allowbreak b_9\allowbreak a_8\allowbreak B_8\allowbreak A_8\allowbreak b_7\allowbreak B_6\allowbreak a_7\allowbreak b_7\allowbreak A_7\allowbreak B_7\allowbreak a_8\allowbreak b_8\allowbreak A_8\allowbreak B_8\allowbreak a_9\allowbreak b_9}
\relator{r_{y_6}}{B_9\allowbreak A_9\allowbreak b_8\allowbreak a_8\allowbreak B_8\allowbreak A_8\allowbreak b_7\allowbreak a_7\allowbreak B_7\allowbreak A_7\allowbreak b_6\allowbreak a_6\allowbreak B_6\allowbreak A_6\allowbreak b_5\allowbreak B_4\allowbreak a_5\allowbreak a_6\allowbreak a_7\allowbreak a_8\allowbreak B_8\allowbreak A_8\allowbreak b_7\allowbreak B_6\allowbreak a_7\allowbreak a_8\allowbreak a_9\allowbreak a_1\allowbreak a_2\allowbreak B_2\allowbreak A_2\allowbreak b_1}
\relator{r_{c_6}}{B_9\allowbreak A_9\allowbreak b_8\allowbreak a_8\allowbreak B_8\allowbreak A_8\allowbreak b_7\allowbreak a_7\allowbreak B_7\allowbreak A_7\allowbreak b_6\allowbreak a_6\allowbreak B_6\allowbreak A_6\allowbreak b_5\allowbreak a_5\allowbreak B_5\allowbreak a_6\allowbreak a_7\allowbreak a_8\allowbreak B_8\allowbreak A_8\allowbreak b_7\allowbreak a_7\allowbreak B_7\allowbreak a_8\allowbreak a_9\allowbreak a_1\allowbreak a_2\allowbreak B_2\allowbreak A_2\allowbreak b_1}
\relator{r_{c'_6}}{B_9\allowbreak A_9\allowbreak b_8\allowbreak a_8\allowbreak B_8\allowbreak A_8\allowbreak b_7\allowbreak a_7\allowbreak B_7\allowbreak A_7\allowbreak b_6\allowbreak a_6\allowbreak B_6\allowbreak A_6\allowbreak b_5\allowbreak B_4\allowbreak a_5\allowbreak a_6\allowbreak a_7\allowbreak B_6\allowbreak a_7\allowbreak a_8\allowbreak a_9\allowbreak a_1}
\relator{r_{z_6}}{B_9\allowbreak A_9\allowbreak b_8\allowbreak a_8\allowbreak B_8\allowbreak A_8\allowbreak b_7\allowbreak a_7\allowbreak B_7\allowbreak A_7\allowbreak b_6\allowbreak a_6\allowbreak B_6\allowbreak A_6\allowbreak b_5\allowbreak a_5\allowbreak B_5\allowbreak a_6\allowbreak a_7\allowbreak a_7\allowbreak B_7\allowbreak a_8\allowbreak a_9\allowbreak a_1}
\relator{r_{w_1}}{B_1\allowbreak a_2\allowbreak b_2\allowbreak B_9\allowbreak A_9\allowbreak b_8\allowbreak a_8\allowbreak a_9}
\relator{r_{w_2}}{B_9\allowbreak a_1\allowbreak b_1\allowbreak A_1\allowbreak B_1\allowbreak a_2\allowbreak b_2\allowbreak A_2\allowbreak B_2\allowbreak a_3\allowbreak b_3\allowbreak A_3\allowbreak B_3\allowbreak a_4\allowbreak b_4\allowbreak B_5\allowbreak a_6\allowbreak b_6}

\endgroup

\begin{proof}
The relator $R_0$ is the surface relation. The remaining relators are obtained from the vanishing cycles in the lifted monodromy factorization~\eqref{eq:Signature0LF_Explicit} using the procedure of Section~\ref{sec:prelim}. They are listed in the same order as the corresponding curves in Figure~\ref{fig:SignatureZeroLF_VanishingCycles}.
\end{proof}

Writing $\operatorname{ab}$ for the exponent vector in
$H_1(\Sigma_9;\Z)$, we have
\[
\operatorname{ab}(w)=\operatorname{ab}(r_{b'_3})-
\operatorname{ab}(r_{x_2}).
\]
Thus $w$ maps to the identity in $H_1(G;\Z)$.
%\marginpar{\footnotesize \blue{Maybe it's better to define $G$ for clarity?}} 
% I think we used $G$ earlier, so maybe no need.

\begin{remark}\label{rmk:nothyperbolic}
With this explicit presentation of $\pi_1(X)$ in hand, we can show that $X$ doesn't admit a strictly negative sectional curvature, and in particular, a hyperbolic structure. To see this, let
\[
h=a_2a_3a_4a_5,\qquad u=b_5,\qquad v=h^{-1}b_1h \, .
\]
In each of the relator pairs $(r_{c_3},r_{z_3})$ and
$(r_{y_3},r_{c'_3})$, the last subword after $a_1b_1A_1B_1$
in the latter relator also occurs in the former.  Solving the second relation
for this word and substituting into the first gives
\[
v=a_6a_7B_7a_8a_9a_1b_1A_1a_2B_9A_9b_8a_8B_8A_8b_7A_7A_6
 =u^{-1}vu.
\]
Hence $u$ and $v$ commute.
Mapping
\[
b_1,b_2\longmapsto(1,0),\qquad
b_4,b_5\longmapsto(0,1),
\]
and all other generators sent to zero, kills every defining relator,
including $w$, and sends $u, v$ to the standard basis of $\mathbb Z^2$.
Thus $\langle u,v\rangle\cong\mathbb Z^2$, and the conclusion follows from
Preissmann's theorem~\cite{preissmann}, as in
\cite[Addendum~13]{baykur:justfakeS2xS2}.
\end{remark}

\begin{comment}
\begin{remark}\label{rmk:second-pseudosection} 
The alternative marked lift depicted in Figure~\ref{fig:Genus9_PushMaps_alternative} gives another point-pushing word. With respect to the generators used above, this word is
$$
w_2=
a_2B_2A_2b_1A_9a_8B_8A_8b_7A_6A_5b_4A_4B_4a_5a_6b_6
A_6B_6a_7b_7A_7B_7a_8b_8A_8B_8a_9b_9 .
$$
Repeating the GAP calculation in Appendix~\ref{app:gap} with $w_2$
in place of $w$ again gives an image of infinite order in the
class--$2$ nilpotent quotient of $G$. Their integral abelianizations
vanish already by the identities
\[
\operatorname{ab}(w)=\operatorname{ab}(r_{b'_3})-
\operatorname{ab}(r_{x_2}),\qquad
\operatorname{ab}(w_2)=\operatorname{ab}(r_{x_1})-
\operatorname{ab}(r_{b'_3}),
\]
where $\operatorname{ab}$ denotes the exponent vector in
$H_1(\Sigma_9;\Z)$. Thus the same obstruction calculation applies to
both words.
\begin{figure}[htbp]
	\centering
	\includegraphics[height=100pt]{Genus9_PushMaps_alternative.eps}
	\caption{Curves of the point-pushing maps for the alternative marked lift.} 
	\label{fig:Genus9_PushMaps_alternative}
\end{figure}
\end{remark}
\end{comment}

\subsection{Fundamental group of the genus--$7$ example}
\label{subsec:genus7-pi1}
%======================================================

We now carry out the same calculation for the genus--$7$ Lefschetz
fibration $h\colon Y\to S^2$ with monodromy
\eqref{eq:Genus7_Signature0LF}. We use the standard generators
$a_1,b_1,\ldots,a_7,b_7$ shown in Figure~\ref{fig:Genus7_pi1generators},
and again write $A_i=a_i^{-1}$ and $B_i=b_i^{-1}$.

\begin{figure}[htbp]
\centering
\includegraphics[height=165pt]{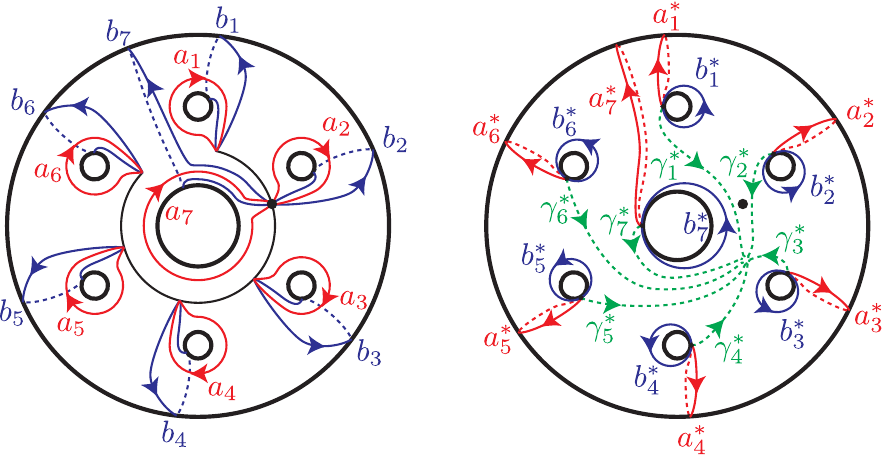}
\caption{The standard generators of $\pi_1(\Sigma_7)$ (left) and its associated chart (right). Here the arc $\gamma_i^*$ represents $[a_i,b_i]^*$.
}
\label{fig:Genus7_pi1generators}
\end{figure}

%%%%%%%%%%%%%%%%%%%%%%% pseudo-section construction
First, we outline how to track the marked point through the breeding construction to give a word  that represents the point-pushing obstruction $\omega(h)$.
We revisit the copies \eqref{eq:R-0}--\eqref{eq:Rrho^2-2} of the genus--$3$ relation each embedded in $\Sigma_7$, this time with a marked point specified in Figure~\ref{fig:Genus7_AuxiliaryCurves}.
Following the construction in~\cite[Section 3.2]{baykur-hamada:Lefschetz}, one can find the lifts of these relations to $\M(\Sigma_{7,1})$ given below:
\begin{align}
    R &= \mathcal{P}_{\vec{\gamma}_2} \cdot t_{X_1}^{-1} t_{D_1}^{-1} t_{Z_1}^{-1} t_{B_1}^{-1} t_{X_2} t_{Z_2} t_{B_3} t_{D_3}, \tag{\ref*{eq:R-0}$^\prime$} \label{eq:R-0'} \\ 
    (R^{\rho})^{t_{u_3}} &= \mathcal{P}_{\vec{\gamma}_1} \cdot t_{B_1} t_{D_1} t_{X_2}^{-1} t_{D_2}^{-1} t_{Z_2}^{-1} t_{B_2}^{-1} t_{X_3} t_{Z_3}, \tag{\ref*{eq:Rrho-1}$^\prime$} \label{eq:Rrho-1'} \\ 
    (R^{\rho^2})^{t_{u_1} t_{u_3}} &= t_{X_1} t_{Z_1} t_{B_2} t_{D_2} t_{X_3}^{-1} t_{D_3}^{-1} t_{Z_3}^{-1} t_{B_3}^{-1}, \tag{\ref*{eq:Rrho^2-1}$^\prime$} \label{eq:Rrho^2-1'} \\
    (R^{\rho})^{t_{v_3} t_{u_3}} &= t_{B_1} t_{D_1} t_{X_2}^{-1} t_{D_2}^{-1} t_{Z_2}^{-1} t_{B_2}^{-1} t_{X_3} t_{Z_3} \cdot \mathcal{P}_{\vec{\gamma}_1}, \tag{\ref*{eq:Rrho-2}$^\prime$} \label{eq:Rrho-2'} \\ 
    (R^{\rho^2})^{t_{v_1} t_{u_1} t_{v_3} t_{u_3}} &= t_{X_1} t_{Z_1} t_{B_2} t_{D_2} t_{X_3}^{-1} t_{D_3}^{-1} t_{Z_3}^{-1} t_{B_3}^{-1}, \tag{\ref*{eq:Rrho^2-2}$^\prime$} \label{eq:Rrho^2-2'} 
\end{align}
where $\vec{\gamma}_1$ and $\vec{\gamma}_2$ are the based loops shown in Figure~\ref{fig:Genus7_PushMaps}, which are parallel to $B_1$ and $X_2$, respectively.
\begin{figure}[htbp]
	\centering
	\includegraphics[height=100pt]{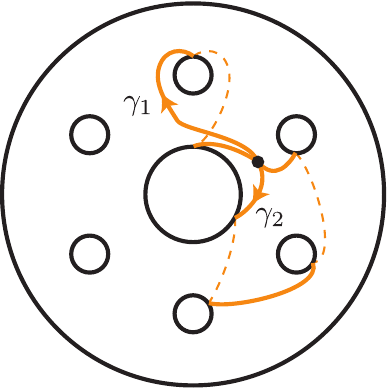}
	\caption{Curves of the point-pushing maps.} 
	\label{fig:Genus7_PushMaps}
\end{figure}
Combining the relations~\eqref{eq:R-0'}, \eqref{eq:Rrho-1'}, \eqref{eq:Rrho^2-1'} gives
\begin{align*}
    R (R^{\rho})^{t_{u_3}} (R^{\rho^2})^{t_{u_1} t_{u_3}} 
    &= \mathcal{P}_{\vec{\gamma}_2} \cdot t_{X_1}^{-1} t_{D_1}^{-1} t_{Z_1}^{-1} t_{B_1}^{-1} t_{X_2} t_{Z_2} t_{B_3} t_{D_3} \cdot \mathcal{P}_{\vec{\gamma}_1} \cdot t_{B_1} t_{D_1} t_{X_2}^{-1} t_{D_2}^{-1} t_{Z_2}^{-1} t_{B_2}^{-1} t_{X_3} t_{Z_3} \\
    & \qquad \cdot t_{X_1} t_{Z_1} t_{B_2} t_{D_2} t_{X_3}^{-1} t_{D_3}^{-1} t_{Z_3}^{-1} t_{B_3}^{-1} \\
    &= \mathcal{P}_{\vec{\gamma}_2} \cdot \mathcal{P}_{\vec{\gamma}_1^*} \cdot t_{X_1}^{-1} t_{D_1}^{-1} t_{Z_1}^{-1} t_{D_1} t_{Z_1} t_{X_1} \cdot  t_{Z_2} t_{D_2}^{-1} t_{Z_2}^{-1} t_{D_2} \cdot t_{B_3} t_{D_3} t_{Z_3} t_{D_3}^{-1} t_{Z_3}^{-1} t_{B_3}^{-1} \\
    &= \mathcal{P}_{\vec{\gamma}_1^* \vec{\gamma}_2} \cdot t_{X_1}^{-1} t_{D_1}^{-1} t_{Z_1}^{-1} t_{D_1} t_{Z_1} t_{X_1} \cdot  t_{Z_2} t_{D_2}^{-1} t_{Z_2}^{-1} t_{D_2} \cdot t_{B_3} t_{D_3} t_{Z_3} t_{D_3}^{-1} t_{Z_3}^{-1} t_{B_3}^{-1}, 
\end{align*}
where $\vec{\gamma}_1^* = t_{X_1}^{-1} t_{D_1}^{-1} t_{Z_1}^{-1} t_{B_1}^{-1}(\vec{\gamma}_1)$ (Here, the twists act from right to left).
We further conjugate the resulting relation by $\varphi = \varphi_1 \varphi_2 \varphi_3 = t_{X_1} t_{a_2}^{-1} t_{b_2}^{-1} t_{c_2} t_{d_2} t_{Z_2}^{-1} t_{B_3}^{-1}$ to obtain
\begin{equation} \tag{\ref*{eq:[D,Z]-variant}$^\prime$} \label{eq:[D,Z]-variant'} 
    (R (R^{\rho})^{t_{u_3}} (R^{\rho^2})^{t_{u_1} t_{u_3}})^{\varphi} 
    = \mathcal{P}_{\vec{\gamma}_3} \cdot [t_{D_1}^{-1}, t_{Z_1}^{-1}] \cdot t_{a_2}^{-1} t_{b_2}^{-1} t_{c_2} t_{d_2} [t_{D_2}^{-1},t_{Z_2}^{-1}] t_{d_2}^{-1} t_{c_2}^{-1} t_{b_2} t_{a_2} \cdot [t_{D_3}, t_{Z_3}]
\end{equation}
with $\vec{\gamma}_3 = \varphi(\vec{\gamma}_1^* \vec{\gamma}_2)$.
In a similar manner, combining the relations~\eqref{eq:R-0'}, \eqref{eq:Rrho-2'}, \eqref{eq:Rrho^2-2'} gives
\begin{align*}
    R (R^{\rho})^{t_{v_3} t_{u_3}} (R^{\rho^2})^{t_{v_1} t_{u_1} t_{v_3} t_{u_3}} 
    &= \mathcal{P}_{\vec{\gamma}_2} \cdot t_{X_1}^{-1} t_{D_1}^{-1} t_{Z_1}^{-1} t_{B_1}^{-1} t_{X_2} t_{Z_2} t_{B_3} t_{D_3} \cdot t_{B_1} t_{D_1} t_{X_2}^{-1} t_{D_2}^{-1} t_{Z_2}^{-1} t_{B_2}^{-1} t_{X_3} t_{Z_3} \cdot \mathcal{P}_{\vec{\gamma}_1} \\
    & \qquad t_{X_1} t_{Z_1} t_{B_2} t_{D_2} t_{X_3}^{-1} t_{D_3}^{-1} t_{Z_3}^{-1} t_{B_3}^{-1} \\
    &= t_{X_1}^{-1} t_{D_1}^{-1} t_{Z_1}^{-1} t_{D_1} t_{Z_1} t_{X_1} \cdot t_{Z_2} t_{D_2}^{-1} t_{Z_2}^{-1} t_{D_2} \cdot t_{B_3} t_{D_3} t_{Z_3} t_{D_3}^{-1} t_{Z_3}^{-1} t_{B_3}^{-1} \cdot \mathcal{P}_{\vec{\gamma}_2^\#} \cdot \mathcal{P}_{\vec{\gamma}_1^\#} \\
    &= t_{X_1}^{-1} t_{D_1}^{-1} t_{Z_1}^{-1} t_{D_1} t_{Z_1} t_{X_1} \cdot  t_{Z_2} t_{D_2}^{-1} t_{Z_2}^{-1} t_{D_2} \cdot t_{B_3} t_{D_3} t_{Z_3} t_{D_3}^{-1} t_{Z_3}^{-1} t_{B_3}^{-1} \cdot \mathcal{P}_{\vec{\gamma}_1^\# \vec{\gamma}_2^\#}, 
\end{align*}
where $\vec{\gamma}_1^\# = t_{Z_1}^{-1} t_{X_1}^{-1} (\vec{\gamma}_1)$ and $\vec{\gamma}_2^\# =  t_{D_2}^{-1} t_{Z_2} t_{D_2} t_{Z_2}^{-1}(\vec{\gamma}_2)$.
Conjugating the resulting relation by $\psi = \psi_1 \psi_2 \psi_3 = \mu_1 t_{X_1} t_{a_2}^{-1} t_{b_2}^{-1} t_{c_2} t_{d_2} \mu_2 t_{Z_2}^{-1} \mu_3^{-1} t_{B_3}^{-1}$, we obtain
\begin{equation} \tag{\ref*{eq:[Z,D]-variant}$^\prime$} \label{eq:[Z,D]-variant'} 
    (R (R^{\rho})^{t_{v_3} t_{u_3}} (R^{\rho^2})^{t_{v_1} t_{u_1} t_{v_3} t_{u_3}})^{\psi} = [t_{Z_1}^{-1}, t_{D_1}^{-1}] \cdot t_{a_2}^{-1} t_{b_2}^{-1} t_{c_2} t_{d_2} [t_{Z_2}^{-1},t_{D_2}^{-1}] t_{d_2}^{-1} t_{c_2}^{-1} t_{b_2} t_{a_2} \cdot [t_{Z_3}, t_{D_3}] \cdot \mathcal{P}_{\vec{\gamma}_4}
\end{equation}
with $\vec{\gamma}_4 = \psi(\vec{\gamma}_1^\# \vec{\gamma}_2^\#)$.
Finally, we combine the relations~\eqref{eq:[D,Z]-variant'} and \eqref{eq:[Z,D]-variant'} to cancel out the right-hand side except for the point-pushing maps:
\begin{equation} \label{eq:Genus7_Signature0LF_Subwords'}
    (R (R^{\rho})^{t_{u_3}} (R^{\rho^2})^{t_{u_1} t_{u_3}})^{\varphi} (R (R^{\rho})^{t_{v_3} t_{u_3}} (R^{\rho^2})^{t_{v_1} t_{u_1} t_{v_3} t_{u_3}})^{\psi} = \mathcal{P}_{\vec{\gamma}_3} \mathcal{P}_{\vec{\gamma}_4}, \tag{\ref*{eq:Genus7_Signature0LF_Subwords}$^\prime$}
\end{equation}
Setting $\vec{\gamma} = \vec{\gamma}_4 \vec{\gamma}_3$, the right-hand side is equal to the single point-pushing map $\mathcal{P}_{\vec{\gamma}}$.
Ultimately, this reconstruction produces a lift of the monodromy factorization~\eqref{eq:Genus7_Signature0LF} to $\M(\Sigma_{7,1})$ in the form
\begin{equation} \tag{\ref*{eq:Genus7_Signature0LF}$^\prime$} \label{eq:Genus7_Signature0LF'} 
    t_{\tilde{x}_1} t_{\tilde{x}_2} \dots t_{\tilde{x}_{48}} = \mathcal{P}_{\vec{\gamma}}
\end{equation}
where $\tilde{x}_i$ is a certain representative of $x_i$ disjoint from the marked point.
We do not present the actual curves $\tilde{x}_i$ here, since the free loops $x_i$ together with the point-pushing loop $\vec\gamma$ are enough for our calculation of the point-pushing obstruction. 

We have thus shown that the loop $\vec{\gamma}$ represents the point-pushing obstruction $\omega(h)$.
To determine $\omega(h)$ explicitly, it remains to express $\vec{\gamma}$ as a word in terms of $a_1, b_1, \dots, a_7, b_7$.
Let $w_7$ denote this word.
From the construction, $\vec\gamma$ naturally factorizes as
\[
\vec\gamma = \vec\gamma_4^L \vec\gamma_4^R \vec\gamma_3^L \vec\gamma_3^R,
\]
where $\vec\gamma_4^L = \psi(\vec\gamma_1^\#)$, $\vec\gamma_4^R = \psi(\vec\gamma_2^\#)$, $\vec\gamma_3^L = \varphi(\vec\gamma_1^*)$, and $\vec\gamma_3^R = \varphi(\vec\gamma_2)$.
To obtain $\vec\gamma_4^R$, consider the based loop $\vec{C}$ in Figure~\ref{fig:Genus7_AuxiliaryCurves}, which can be expressed as
\[
\vec{C}=B_7[a_1,b_1][a_2,b_2]a_3b_3A_3.
\]
Then, it is not hard to verify
\[
\vec\gamma_4^R=
 t_{a_2}^{-1}t_{c_2}^{3}t_{d_2}^{3}t_{e_2}
 t_{c_2}^{-2}t_{d_2}^{-2}t_{b_2}^{-1}t_{e_2}^{-1}(\vec{C}).
\]
Here the subscripts refer to curves in Figure~\ref{fig:Genus7_AuxiliaryCurves}, not to the fundamental-group generators. 
The other three loops are easier to work out and the resulting words are
\begingroup
\small
\begin{align*}
\vec\gamma_4^L &= a_1B_1A_1A_7b_6a_7a_1B_1a_2b_2A_2a_1B_1A_1b_7, \\*
\vec\gamma_4^R &= a_4B_4A_4b_3a_4B_4A_4b_3A_3B_2a_3b_3A_3B_2a_3b_3A_3a_2B_2A_2b_1a_1B_1A_1b_7a_4B_4A_4b_3a_4B_4A_4b_3A_3B_2 \\*
&\quad a_3b_3A_3B_2a_3b_3A_3a_2B_2A_2b_1a_1B_1A_1b_7a_4B_4A_4b_3a_4B_4A_4b_3a_4B_4A_4A_3B_2a_3b_3A_3B_2a_3b_3A_3 \\*
&\quad a_2B_2A_2b_1a_1B_1A_1b_7a_4B_4A_4b_3a_4B_4A_4b_3a_4B_4A_4b_3A_3B_2a_3b_3A_3a_2B_2A_2b_1a_1B_1A_1b_7a_4B_4 \\*
&\quad A_4b_3a_4B_4A_4b_3a_4B_4A_4b_3A_3B_2a_3b_3A_3B_2a_3b_3A_3, \\*
\vec\gamma_3^L &= A_7B_6a_7a_1b_1A_1a_2B_2A_2b_1b_1b_1A_1A_7B_6a_7a_1b_1A_1a_2B_2A_2b_1a_1B_1A_1b_7, \\*
\vec\gamma_3^R &= B_3A_3.
\end{align*}
\endgroup
Thus, the word $w_7$ is the product of these four words.

We can now calculate the full fundamental group.

\begin{prop}\label{prop:pi1Y}
Let $Y$ be the genus--$7$ Lefschetz fibration described above. Then
\[
\pi_1(Y)=
\left\langle a_1,b_1,\ldots,a_7,b_7
\ \middle|\ R_0,r_{x_1},\ldots,r_{x_{48}},w_7\right\rangle.
\]
The relators $R_0,r_{x_1},\ldots,r_{x_{48}}$ are listed below, and
$w_7$ is the relation imposed by the last $2$--handle.
\end{prop}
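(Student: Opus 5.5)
The presentation will follow directly from Proposition~\ref{prop:pointpush-obstruction} once its three inputs are assembled: the surface group $F=\pi_1(\Sigma_7,p)$, the normal closure $N$ of the vanishing cycles, and the class of the last $2$--handle. The argument runs parallel to the proof of Proposition~\ref{prop:pi1X}.

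First, present $F$ by the generators $a_1,b_1,\ldots,a_7,b_7$ of Figure~\ref{fig:Genus7_pi1generators}, with the single surface relator $R_0=\prod_{i=1}^7[a_i,b_i]$, ordered to match the chart.

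Second, for each vanishing cycle $x_j$ of the factorization~\eqref{eq:Genus7_Signature0LF}, shown in Figure~\ref{fig:Genus7_SignatureZeroLF_VanishingCycles}, I will read off a word $r_{x_j}$. The reading uses the dual chart: traverse $x_j$ once and record its signed crossings with the dual arcs, as described in Appendix~\ref{app:word-reading}. Lemma~\ref{lem:pointpush-class} lets me connect each free loop $x_j$ to $p$ by an arbitrary path, because the normal closure $N$ does not depend on these choices. In particular I do not need the marked representatives $\tilde x_j$ from~\eqref{eq:Genus7_Signature0LF'}; the free curves suffice. Together with $R_0$, this gives
\[
G\cong\pi_1(Y^\circ)\cong\bigl\langle a_i,b_i \bigm| R_0,r_{x_1},\ldots,r_{x_{48}}\bigr\rangle.
\]

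Third, I add the last $2$--handle. By Proposition~\ref{prop:pointpush-obstruction}, its attaching circle represents $\omega(h)$ up to inversion, and adding either $\omega(h)$ or its inverse as a relator gives the same normal closure. The marked lift~\eqref{eq:Genus7_Signature0LF'} has the form $t_{\tilde x_1}\cdots t_{\tilde x_{48}}=\mathcal P_{\vec\gamma}$, so $\omega(h)=[\vec\gamma]$. I will justify this lift by the breeding bookkeeping already carried out, namely the primed relations~\eqref{eq:R-0'}--\eqref{eq:[Z,D]-variant'}. Writing $\vec\gamma=\vec\gamma_4^L\vec\gamma_4^R\vec\gamma_3^L\vec\gamma_3^R$ in the generators gives $w_7$. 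The remaining $3$-- and $4$--handles do not affect $\pi_1$, which yields the stated presentation.

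The main obstacle is getting the words right. Each $r_{x_j}$ has to be read faithfully from intricate curves, since each $x_j$ is an image under $\varphi$ or $\psi$ composed with rotations and twists. I would first compute the images of $r_1,\ldots,r_8$ symbolically as automorphisms of $F$, using the expressions of $\varphi$ and $\psi$ in terms of $t_{a_i},t_{b_i},t_{c_i},t_{d_i},t_{e_i}$, and then cross-check against the figure. The point-pushing word needs particular care with conventions: right-to-left action of twists, $\mathcal P_\alpha\mathcal P_\beta=\mathcal P_{\beta\alpha}$, and the need to apply $\psi$ to $\vec C$ after the twist expression for $\vec\gamma_4^R$. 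As sanity checks, I would verify that every $r_{x_j}$ has the abelianization of the homology class of $x_j$, and that the total signs are consistent, for example by checking in GAP that $w_7$ is central in $G$, as Section~\ref{subsec:last2handle} predicts.
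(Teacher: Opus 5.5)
Your proposal is correct and follows the paper's own argument. The paper presents $G_7$ by $R_0$ together with the words $r_{x_j}$ read from the free vanishing cycles via the chart of Appendix~\ref{app:word-reading}, and then uses Proposition~\ref{prop:pointpush-obstruction}, applied to the marked lift \eqref{eq:Genus7_Signature0LF'}, to supply the last relator $w_7$; this is exactly what you do.

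One small caution on your bookkeeping remark: the paper's formula $\vec\gamma_4^R=t_{a_2}^{-1}t_{c_2}^{3}t_{d_2}^{3}t_{e_2}t_{c_2}^{-2}t_{d_2}^{-2}t_{b_2}^{-1}t_{e_2}^{-1}(\vec C)$ is already $\psi(\vec\gamma_2^\#)$, so $\psi$ must not be applied to $\vec C$ again afterwards.
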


\begingroup
\footnotesize
\raggedright
\setlength{\parindent}{0pt}
\setlength{\parskip}{0.35ex}
\newcommand{\relator}[2]{\noindent $#1 = #2$\par}

\relator{R_0}{a_1\allowbreak b_1\allowbreak A_1\allowbreak B_1\allowbreak a_2\allowbreak b_2\allowbreak A_2\allowbreak B_2\allowbreak a_3\allowbreak b_3\allowbreak A_3\allowbreak B_3\allowbreak a_4\allowbreak b_4\allowbreak A_4\allowbreak B_4\allowbreak a_5\allowbreak b_5\allowbreak A_5\allowbreak B_5\allowbreak a_6\allowbreak b_6\allowbreak A_6\allowbreak B_6\allowbreak a_7\allowbreak b_7\allowbreak A_7\allowbreak B_7}
\relator{r_{x_{1}}}{B_3\allowbreak A_3\allowbreak B_2\allowbreak A_2\allowbreak b_1\allowbreak a_1\allowbreak B_1\allowbreak A_1\allowbreak A_7\allowbreak b_6\allowbreak a_6\allowbreak a_7\allowbreak a_1\allowbreak B_1\allowbreak a_2\allowbreak b_2\allowbreak A_7\allowbreak B_6\allowbreak A_6\allowbreak b_5\allowbreak A_5\allowbreak a_7}
\relator{r_{x_{2}}}{A_7\allowbreak a_5\allowbreak B_5\allowbreak a_6\allowbreak a_7\allowbreak a_1\allowbreak b_1\allowbreak A_1\allowbreak A_7\allowbreak A_6\allowbreak B_6\allowbreak a_7\allowbreak a_1\allowbreak b_1\allowbreak A_1\allowbreak a_2\allowbreak B_2\allowbreak A_2\allowbreak b_1\allowbreak a_1\allowbreak B_1\allowbreak A_1\allowbreak b_7}
\relator{r_{x_{3}}}{B_3\allowbreak A_3\allowbreak B_2\allowbreak A_2\allowbreak b_1\allowbreak a_1\allowbreak B_1\allowbreak A_1\allowbreak b_7\allowbreak A_7\allowbreak a_5\allowbreak B_5\allowbreak a_6\allowbreak a_7\allowbreak a_1\allowbreak a_2\allowbreak b_2\allowbreak A_7\allowbreak A_6\allowbreak B_6\allowbreak a_7\allowbreak a_1\allowbreak b_1\allowbreak A_1\allowbreak a_2\allowbreak B_2\allowbreak A_2\allowbreak b_1\allowbreak a_1\allowbreak B_1\allowbreak A_1\allowbreak b_7}
\relator{r_{x_{4}}}{A_7\allowbreak B_6\allowbreak a_7\allowbreak a_1\allowbreak b_1\allowbreak A_1\allowbreak B_2\allowbreak A_2\allowbreak b_1\allowbreak a_2\allowbreak a_3\allowbreak b_3}
\relator{r_{x_{5}}}{A_7\allowbreak B_6\allowbreak a_7\allowbreak a_1\allowbreak b_1\allowbreak A_1\allowbreak A_7\allowbreak A_6\allowbreak B_6\allowbreak a_7\allowbreak a_1\allowbreak b_1\allowbreak A_1\allowbreak B_2\allowbreak A_2\allowbreak b_1\allowbreak a_2\allowbreak B_2\allowbreak A_2\allowbreak b_1\allowbreak a_1\allowbreak B_1\allowbreak A_1\allowbreak b_7\allowbreak A_7\allowbreak a_5\allowbreak B_5\allowbreak a_6\allowbreak a_7\allowbreak a_1\allowbreak a_2\allowbreak a_3\allowbreak b_3}
\relator{r_{x_{6}}}{A_7\allowbreak A_6\allowbreak B_6\allowbreak a_7\allowbreak a_1\allowbreak b_1\allowbreak A_1\allowbreak B_2\allowbreak A_2\allowbreak b_1\allowbreak a_1\allowbreak B_1\allowbreak A_1\allowbreak b_7\allowbreak A_7\allowbreak a_5\allowbreak B_5\allowbreak a_6\allowbreak a_7\allowbreak a_1\allowbreak a_2\allowbreak a_3\allowbreak b_3}
\relator{r_{x_{7}}}{B_7\allowbreak a_1\allowbreak b_1\allowbreak A_1\allowbreak B_1\allowbreak a_2\allowbreak b_2\allowbreak A_2\allowbreak a_1\allowbreak B_1\allowbreak A_1\allowbreak b_7\allowbreak A_7\allowbreak a_5\allowbreak B_5\allowbreak a_6\allowbreak a_7\allowbreak a_1\allowbreak B_1\allowbreak A_1\allowbreak A_7\allowbreak b_6\allowbreak A_6\allowbreak a_7}
\relator{r_{x_{8}}}{a_1\allowbreak B_1\allowbreak A_1\allowbreak A_7\allowbreak b_6\allowbreak a_7\allowbreak B_7\allowbreak a_1\allowbreak b_1\allowbreak A_1\allowbreak B_1\allowbreak a_2\allowbreak b_2\allowbreak A_2\allowbreak A_2\allowbreak B_1\allowbreak a_2\allowbreak a_3\allowbreak b_3}
\relator{r_{x_{9}}}{A_7\allowbreak B_6\allowbreak a_7\allowbreak a_1\allowbreak b_1\allowbreak a_2\allowbreak a_3\allowbreak b_3\allowbreak a_3\allowbreak a_4\allowbreak b_4\allowbreak a_5\allowbreak B_5\allowbreak a_6\allowbreak b_6\allowbreak A_6\allowbreak B_6\allowbreak a_7\allowbreak b_7\allowbreak A_7\allowbreak a_5\allowbreak B_5\allowbreak A_5\allowbreak b_4\allowbreak a_4\allowbreak B_4\allowbreak A_4\allowbreak b_3\allowbreak a_3\allowbreak B_7\allowbreak a_1\allowbreak b_1\allowbreak A_1\allowbreak B_1\allowbreak a_2\allowbreak b_2\allowbreak A_2\allowbreak B_2\allowbreak A_2\allowbreak b_1\allowbreak a_2\allowbreak B_2\allowbreak A_2\allowbreak b_1\allowbreak a_1\allowbreak B_1\allowbreak A_1\allowbreak b_7\allowbreak A_7\allowbreak B_6\allowbreak a_7\allowbreak a_1\allowbreak b_1\allowbreak A_1\allowbreak a_3\allowbreak a_4\allowbreak b_4\allowbreak A_4\allowbreak a_5\allowbreak B_5\allowbreak a_6\allowbreak b_6\allowbreak A_6\allowbreak A_5\allowbreak A_4\allowbreak B_7\allowbreak a_1\allowbreak b_1\allowbreak A_1\allowbreak B_1\allowbreak a_2\allowbreak b_2\allowbreak A_2\allowbreak B_2\allowbreak a_3\allowbreak b_3}
\relator{r_{x_{10}}}{A_7\allowbreak B_6\allowbreak a_7\allowbreak a_1\allowbreak b_1\allowbreak a_2\allowbreak a_3\allowbreak b_3\allowbreak a_3\allowbreak B_7\allowbreak a_1\allowbreak b_1\allowbreak A_1\allowbreak B_1\allowbreak a_2\allowbreak b_2\allowbreak A_2\allowbreak B_2\allowbreak A_2\allowbreak b_1\allowbreak a_2\allowbreak B_2\allowbreak A_2\allowbreak b_1\allowbreak a_1\allowbreak B_1\allowbreak A_1\allowbreak b_7\allowbreak A_7\allowbreak B_6\allowbreak a_7\allowbreak a_1\allowbreak b_1\allowbreak A_1\allowbreak a_3\allowbreak a_4\allowbreak b_4\allowbreak A_4\allowbreak B_7\allowbreak a_1\allowbreak b_1\allowbreak A_1\allowbreak B_1\allowbreak a_2\allowbreak b_2\allowbreak A_2\allowbreak B_2\allowbreak a_3\allowbreak b_3}
\relator{r_{x_{11}}}{A_3\allowbreak B_3\allowbreak a_4\allowbreak b_4\allowbreak A_4\allowbreak B_4\allowbreak a_5\allowbreak b_5\allowbreak A_5\allowbreak a_7\allowbreak B_7\allowbreak A_7\allowbreak b_6\allowbreak a_6\allowbreak B_6\allowbreak A_6\allowbreak b_5\allowbreak A_5\allowbreak B_4\allowbreak A_4\allowbreak B_7\allowbreak a_1\allowbreak b_1\allowbreak A_1\allowbreak B_1\allowbreak a_2\allowbreak b_2\allowbreak A_2\allowbreak B_2\allowbreak A_2\allowbreak b_1\allowbreak a_2\allowbreak B_2\allowbreak A_2\allowbreak b_1\allowbreak a_1\allowbreak B_1\allowbreak A_1\allowbreak b_7\allowbreak A_7\allowbreak B_6\allowbreak a_7\allowbreak a_1\allowbreak b_1\allowbreak A_1\allowbreak B_3\allowbreak A_3\allowbreak b_2\allowbreak a_2\allowbreak B_2\allowbreak A_2\allowbreak b_1\allowbreak a_1\allowbreak B_1\allowbreak A_1\allowbreak b_7\allowbreak a_4\allowbreak a_5\allowbreak a_6\allowbreak B_6\allowbreak A_6\allowbreak b_5\allowbreak A_5\allowbreak B_7\allowbreak a_1\allowbreak b_1\allowbreak A_1\allowbreak B_1\allowbreak a_2\allowbreak b_2\allowbreak A_2\allowbreak B_2\allowbreak a_3\allowbreak b_3\allowbreak A_7\allowbreak B_6\allowbreak a_7\allowbreak a_1\allowbreak b_1\allowbreak a_2\allowbreak a_2\allowbreak B_2\allowbreak A_2\allowbreak b_1\allowbreak a_1\allowbreak B_1\allowbreak A_1\allowbreak b_7}
\relator{r_{x_{12}}}{B_7\allowbreak a_1\allowbreak b_1\allowbreak A_1\allowbreak B_1\allowbreak a_2\allowbreak b_2\allowbreak A_2\allowbreak a_3\allowbreak a_4\allowbreak b_4\allowbreak A_4\allowbreak a_5\allowbreak B_5\allowbreak a_6\allowbreak b_6\allowbreak A_6\allowbreak A_5\allowbreak A_4\allowbreak B_7\allowbreak a_1\allowbreak b_1\allowbreak A_1\allowbreak B_1\allowbreak a_2\allowbreak b_2\allowbreak A_2\allowbreak B_2\allowbreak a_3\allowbreak b_3\allowbreak a_3\allowbreak a_4\allowbreak b_4\allowbreak a_5\allowbreak B_5\allowbreak a_6\allowbreak b_6\allowbreak A_6\allowbreak B_6\allowbreak a_7\allowbreak b_7\allowbreak A_7\allowbreak a_5\allowbreak B_5\allowbreak A_5\allowbreak b_4\allowbreak a_4\allowbreak B_4\allowbreak A_4\allowbreak b_3\allowbreak a_3}
\relator{r_{x_{13}}}{A_7\allowbreak B_6\allowbreak a_7\allowbreak a_1\allowbreak b_1\allowbreak a_2\allowbreak a_3\allowbreak a_4\allowbreak b_4\allowbreak A_4\allowbreak a_5\allowbreak B_5\allowbreak a_6\allowbreak b_6\allowbreak A_6\allowbreak A_5\allowbreak A_4\allowbreak B_7\allowbreak a_1\allowbreak b_1\allowbreak A_1\allowbreak B_1\allowbreak a_2\allowbreak b_2\allowbreak A_2\allowbreak B_2\allowbreak A_2\allowbreak b_1\allowbreak a_2\allowbreak B_2\allowbreak A_2\allowbreak b_1\allowbreak a_1\allowbreak B_1\allowbreak A_1\allowbreak b_7\allowbreak A_7\allowbreak B_6\allowbreak a_7\allowbreak a_1\allowbreak b_1\allowbreak A_1\allowbreak a_3\allowbreak a_4\allowbreak b_4\allowbreak a_5\allowbreak B_5\allowbreak a_6\allowbreak b_6\allowbreak A_6\allowbreak B_6\allowbreak a_7\allowbreak b_7\allowbreak A_7\allowbreak a_5\allowbreak B_5\allowbreak A_5\allowbreak b_4\allowbreak a_4\allowbreak B_4\allowbreak A_4\allowbreak b_3\allowbreak a_3}
\relator{r_{x_{14}}}{A_7\allowbreak B_6\allowbreak a_7\allowbreak a_1\allowbreak b_1\allowbreak a_2\allowbreak a_2\allowbreak B_2\allowbreak A_2\allowbreak b_1\allowbreak a_1\allowbreak B_1\allowbreak A_1\allowbreak b_7\allowbreak A_3\allowbreak B_3\allowbreak a_4\allowbreak b_4\allowbreak A_4\allowbreak a_5\allowbreak B_5\allowbreak a_6\allowbreak b_6\allowbreak A_6\allowbreak A_5\allowbreak A_4\allowbreak B_7\allowbreak a_1\allowbreak b_1\allowbreak A_1\allowbreak B_1\allowbreak a_2\allowbreak b_2\allowbreak A_2\allowbreak B_2\allowbreak A_2\allowbreak b_1\allowbreak a_2\allowbreak B_2\allowbreak A_2\allowbreak b_1\allowbreak a_1\allowbreak B_1\allowbreak A_1\allowbreak b_7\allowbreak A_7\allowbreak B_6\allowbreak a_7\allowbreak a_1\allowbreak b_1\allowbreak A_1\allowbreak B_3\allowbreak A_3\allowbreak b_2\allowbreak a_2\allowbreak B_2\allowbreak A_2\allowbreak b_1\allowbreak a_1\allowbreak B_1\allowbreak A_1\allowbreak b_7\allowbreak a_4\allowbreak a_5\allowbreak B_5\allowbreak a_6\allowbreak b_6\allowbreak A_6\allowbreak B_6\allowbreak a_7\allowbreak b_7\allowbreak A_7\allowbreak a_5\allowbreak B_5\allowbreak A_5\allowbreak b_4\allowbreak a_4\allowbreak B_4\allowbreak A_4\allowbreak b_3\allowbreak a_3}
\relator{r_{x_{15}}}{B_3\allowbreak A_3\allowbreak b_2\allowbreak a_2\allowbreak B_2\allowbreak A_2\allowbreak b_1\allowbreak a_1\allowbreak B_1\allowbreak A_1\allowbreak b_7\allowbreak a_4\allowbreak B_4\allowbreak A_4\allowbreak A_3\allowbreak a_2\allowbreak B_2\allowbreak A_2\allowbreak b_1\allowbreak a_1\allowbreak B_1\allowbreak A_1\allowbreak b_7\allowbreak A_7\allowbreak B_6\allowbreak a_7\allowbreak a_1\allowbreak b_1\allowbreak a_2\allowbreak a_2\allowbreak B_2\allowbreak A_2\allowbreak b_1\allowbreak a_1\allowbreak B_1\allowbreak A_1\allowbreak b_7\allowbreak A_3\allowbreak B_3\allowbreak A_3\allowbreak A_2\allowbreak b_1\allowbreak a_2\allowbreak B_2\allowbreak A_2\allowbreak b_1\allowbreak a_1\allowbreak B_1\allowbreak A_1\allowbreak b_7\allowbreak A_7\allowbreak B_6\allowbreak a_7\allowbreak a_1\allowbreak b_1\allowbreak A_1}
\relator{r_{x_{16}}}{B_7\allowbreak a_1\allowbreak b_1\allowbreak A_1\allowbreak B_1\allowbreak a_2\allowbreak b_2\allowbreak A_2\allowbreak a_3\allowbreak a_4\allowbreak b_4\allowbreak A_4\allowbreak a_7\allowbreak B_7\allowbreak A_7\allowbreak b_6\allowbreak a_6\allowbreak B_6\allowbreak A_6\allowbreak b_5\allowbreak a_7\allowbreak B_7\allowbreak A_7\allowbreak b_6\allowbreak a_6\allowbreak B_6\allowbreak A_6\allowbreak b_5\allowbreak A_5\allowbreak A_4\allowbreak B_7\allowbreak a_1\allowbreak b_1\allowbreak A_1\allowbreak B_1\allowbreak a_2\allowbreak b_2\allowbreak A_2\allowbreak B_2\allowbreak a_3\allowbreak b_3\allowbreak a_3\allowbreak a_4\allowbreak a_5\allowbreak a_6\allowbreak B_6\allowbreak A_6\allowbreak b_5\allowbreak A_5\allowbreak a_4\allowbreak B_4\allowbreak A_4\allowbreak b_3\allowbreak a_3}
\relator{r_{x_{17}}}{B_7\allowbreak a_1\allowbreak b_1\allowbreak A_1\allowbreak B_1\allowbreak a_2\allowbreak b_2\allowbreak A_2\allowbreak a_3\allowbreak a_4\allowbreak b_4\allowbreak a_5\allowbreak B_5\allowbreak a_6\allowbreak b_6\allowbreak A_6\allowbreak B_6\allowbreak a_7\allowbreak a_1\allowbreak b_1\allowbreak A_1\allowbreak A_7\allowbreak B_6\allowbreak A_6\allowbreak A_5\allowbreak b_4\allowbreak a_5\allowbreak B_5\allowbreak a_6\allowbreak b_6\allowbreak A_6\allowbreak A_5\allowbreak A_4\allowbreak B_7\allowbreak a_1\allowbreak b_1\allowbreak A_1\allowbreak B_1\allowbreak a_2\allowbreak b_2\allowbreak A_2\allowbreak B_2\allowbreak a_3\allowbreak a_4\allowbreak b_4\allowbreak A_4\allowbreak a_5\allowbreak B_5\allowbreak a_6\allowbreak b_6\allowbreak a_7\allowbreak a_1\allowbreak a_2\allowbreak B_2\allowbreak A_2\allowbreak b_1\allowbreak a_1\allowbreak B_1\allowbreak A_1\allowbreak b_7\allowbreak A_7\allowbreak B_5\allowbreak a_6\allowbreak b_6\allowbreak A_6\allowbreak B_6\allowbreak a_7\allowbreak b_7\allowbreak A_7\allowbreak a_5\allowbreak B_5\allowbreak A_5\allowbreak b_4\allowbreak a_4\allowbreak B_4\allowbreak A_4\allowbreak b_3\allowbreak a_3}
\relator{r_{x_{18}}}{B_7\allowbreak a_1\allowbreak b_1\allowbreak A_1\allowbreak B_1\allowbreak a_2\allowbreak b_2\allowbreak A_2\allowbreak a_3\allowbreak a_4\allowbreak b_4\allowbreak a_5\allowbreak B_5\allowbreak a_6\allowbreak b_6\allowbreak A_6\allowbreak A_5\allowbreak A_4\allowbreak B_7\allowbreak a_1\allowbreak b_1\allowbreak A_1\allowbreak B_1\allowbreak a_2\allowbreak b_2\allowbreak A_2\allowbreak B_2\allowbreak a_3\allowbreak a_4\allowbreak b_4\allowbreak A_4\allowbreak a_5\allowbreak B_5\allowbreak A_5\allowbreak b_4\allowbreak a_4\allowbreak B_4\allowbreak A_4\allowbreak b_3\allowbreak a_3}
\relator{r_{x_{19}}}{A_3\allowbreak B_3\allowbreak a_4\allowbreak b_4\allowbreak A_4\allowbreak a_5\allowbreak B_5\allowbreak a_6\allowbreak b_6\allowbreak a_7\allowbreak a_1\allowbreak a_2\allowbreak B_2\allowbreak A_2\allowbreak b_1\allowbreak a_1\allowbreak B_1\allowbreak A_1\allowbreak b_7\allowbreak A_7\allowbreak B_5\allowbreak a_6\allowbreak b_6\allowbreak A_6\allowbreak B_6\allowbreak a_7\allowbreak b_7\allowbreak A_7\allowbreak a_4\allowbreak B_4\allowbreak A_4\allowbreak A_3\allowbreak b_2\allowbreak a_2\allowbreak B_2\allowbreak A_2\allowbreak b_1\allowbreak a_1\allowbreak B_1\allowbreak A_1\allowbreak b_7\allowbreak a_4\allowbreak a_5\allowbreak B_5\allowbreak a_6\allowbreak b_6\allowbreak A_6\allowbreak B_6\allowbreak a_7\allowbreak a_1\allowbreak b_1\allowbreak A_1\allowbreak A_7\allowbreak B_6\allowbreak A_6\allowbreak A_5\allowbreak A_4\allowbreak A_3\allowbreak a_2\allowbreak B_2\allowbreak A_2\allowbreak b_1\allowbreak a_1\allowbreak B_1\allowbreak A_1\allowbreak b_7}
\relator{r_{x_{20}}}{A_7\allowbreak B_5\allowbreak a_6\allowbreak b_6\allowbreak A_6\allowbreak B_6\allowbreak a_7\allowbreak a_1\allowbreak b_1\allowbreak A_1\allowbreak A_7\allowbreak B_6\allowbreak A_6\allowbreak A_5\allowbreak b_4\allowbreak a_5\allowbreak B_5\allowbreak a_6\allowbreak b_6\allowbreak a_7\allowbreak a_1\allowbreak a_2\allowbreak B_2\allowbreak A_2\allowbreak b_1\allowbreak a_1\allowbreak B_1\allowbreak A_1\allowbreak b_7}
\relator{r_{x_{21}}}{A_7\allowbreak A_5\allowbreak A_4\allowbreak B_7\allowbreak a_1\allowbreak b_1\allowbreak A_1\allowbreak B_1\allowbreak a_2\allowbreak b_2\allowbreak A_2\allowbreak B_2\allowbreak a_3\allowbreak a_4\allowbreak b_4\allowbreak A_4\allowbreak a_7\allowbreak B_7\allowbreak A_7\allowbreak b_6\allowbreak a_6\allowbreak B_6\allowbreak A_6\allowbreak b_5\allowbreak a_7\allowbreak B_7\allowbreak a_1\allowbreak b_1\allowbreak A_1\allowbreak A_7\allowbreak B_6\allowbreak A_6\allowbreak A_5\allowbreak b_4\allowbreak a_4\allowbreak B_4\allowbreak A_4\allowbreak b_3\allowbreak a_3\allowbreak B_7\allowbreak a_1\allowbreak b_1\allowbreak A_1\allowbreak B_1\allowbreak a_2\allowbreak b_2\allowbreak A_2\allowbreak a_3\allowbreak a_4\allowbreak a_5\allowbreak a_6\allowbreak a_7\allowbreak a_1\allowbreak a_2\allowbreak B_2\allowbreak A_2\allowbreak b_1\allowbreak a_1\allowbreak B_1\allowbreak A_1\allowbreak b_7}
\relator{r_{x_{22}}}{A_7\allowbreak a_6\allowbreak B_6\allowbreak A_6\allowbreak b_5\allowbreak A_5\allowbreak A_4\allowbreak B_7\allowbreak a_1\allowbreak b_1\allowbreak A_1\allowbreak B_1\allowbreak a_2\allowbreak b_2\allowbreak A_2\allowbreak B_2\allowbreak a_3\allowbreak a_4\allowbreak b_4\allowbreak A_4\allowbreak a_7\allowbreak B_7\allowbreak A_7\allowbreak b_6\allowbreak a_6\allowbreak B_6\allowbreak A_6\allowbreak b_5\allowbreak a_7\allowbreak B_7\allowbreak a_1\allowbreak b_1\allowbreak A_1\allowbreak A_7\allowbreak B_6\allowbreak A_6\allowbreak b_5\allowbreak A_5\allowbreak a_4\allowbreak B_4\allowbreak A_4\allowbreak b_3\allowbreak a_3\allowbreak B_7\allowbreak a_1\allowbreak b_1\allowbreak A_1\allowbreak B_1\allowbreak a_2\allowbreak b_2\allowbreak A_2\allowbreak a_3\allowbreak a_4\allowbreak a_5\allowbreak a_6\allowbreak a_7\allowbreak a_1\allowbreak a_2\allowbreak B_2\allowbreak A_2\allowbreak b_1\allowbreak a_1\allowbreak B_1\allowbreak A_1\allowbreak b_7}
\relator{r_{x_{23}}}{B_7\allowbreak a_1\allowbreak b_1\allowbreak A_1\allowbreak B_1\allowbreak a_2\allowbreak b_2\allowbreak A_2\allowbreak a_3\allowbreak a_4\allowbreak a_5\allowbreak a_6\allowbreak B_6\allowbreak A_6\allowbreak b_5\allowbreak A_5\allowbreak A_4\allowbreak B_7\allowbreak a_1\allowbreak b_1\allowbreak A_1\allowbreak B_1\allowbreak a_2\allowbreak b_2\allowbreak A_2\allowbreak B_2\allowbreak a_3\allowbreak a_4\allowbreak b_4\allowbreak A_4\allowbreak a_7\allowbreak B_7\allowbreak A_7\allowbreak b_6\allowbreak a_6\allowbreak B_6\allowbreak A_6\allowbreak b_5\allowbreak a_4\allowbreak B_4\allowbreak A_4\allowbreak b_3\allowbreak a_3}
\relator{r_{x_{24}}}{A_7\allowbreak a_6\allowbreak B_6\allowbreak A_6\allowbreak b_5\allowbreak a_7\allowbreak B_7\allowbreak a_1\allowbreak b_1\allowbreak A_1\allowbreak A_7\allowbreak B_6\allowbreak A_6\allowbreak b_5\allowbreak A_5\allowbreak B_4\allowbreak a_5\allowbreak a_6\allowbreak a_7\allowbreak a_1\allowbreak a_2\allowbreak B_2\allowbreak A_2\allowbreak b_1\allowbreak a_1\allowbreak B_1\allowbreak A_1\allowbreak b_7}
\relator{r_{x_{25}}}{A_7\allowbreak B_4\allowbreak a_5\allowbreak a_6\allowbreak a_7\allowbreak a_1\allowbreak a_3\allowbreak B_3\allowbreak A_3\allowbreak A_2\allowbreak b_1\allowbreak A_1\allowbreak A_7\allowbreak B_6\allowbreak A_6\allowbreak b_5\allowbreak B_6\allowbreak a_7\allowbreak a_1\allowbreak a_2\allowbreak a_3\allowbreak a_4\allowbreak b_4\allowbreak A_4}
\relator{r_{x_{26}}}{A_7\allowbreak B_6\allowbreak A_6\allowbreak b_5\allowbreak B_6\allowbreak a_7\allowbreak a_1\allowbreak a_2\allowbreak B_2\allowbreak A_2\allowbreak b_1\allowbreak a_1\allowbreak B_1\allowbreak A_1\allowbreak b_7\allowbreak A_7\allowbreak B_4\allowbreak a_5\allowbreak a_6\allowbreak a_7\allowbreak a_1}
\relator{r_{x_{27}}}{a_4\allowbreak B_4\allowbreak A_4\allowbreak A_3\allowbreak A_2\allowbreak A_1\allowbreak b_7\allowbreak A_7\allowbreak B_4\allowbreak a_5\allowbreak a_6\allowbreak a_7\allowbreak a_1\allowbreak a_1\allowbreak B_1\allowbreak a_2\allowbreak a_3\allowbreak b_3\allowbreak A_3\allowbreak A_1\allowbreak A_7\allowbreak A_6\allowbreak b_5\allowbreak B_6\allowbreak a_7\allowbreak a_1\allowbreak a_2\allowbreak B_2\allowbreak A_2\allowbreak b_1\allowbreak a_1\allowbreak B_1\allowbreak A_1\allowbreak b_7}
\relator{r_{x_{28}}}{A_7\allowbreak B_6\allowbreak a_7\allowbreak a_1\allowbreak a_3\allowbreak B_3\allowbreak A_3\allowbreak A_2\allowbreak b_1\allowbreak a_1\allowbreak B_1\allowbreak a_2\allowbreak a_3\allowbreak a_4\allowbreak b_4\allowbreak A_4}
\relator{r_{x_{29}}}{A_7\allowbreak B_6\allowbreak A_6\allowbreak b_5\allowbreak B_6\allowbreak a_7\allowbreak a_1\allowbreak a_3\allowbreak B_3\allowbreak A_3\allowbreak A_2\allowbreak b_1\allowbreak a_1\allowbreak B_1\allowbreak A_1\allowbreak b_7\allowbreak A_7\allowbreak B_4\allowbreak a_5\allowbreak a_6\allowbreak a_7\allowbreak a_1\allowbreak a_1\allowbreak B_1\allowbreak a_2\allowbreak a_3\allowbreak a_4\allowbreak b_4\allowbreak A_4}
\relator{r_{x_{30}}}{A_1\allowbreak A_7\allowbreak A_6\allowbreak b_5\allowbreak B_6\allowbreak a_7\allowbreak a_1\allowbreak a_3\allowbreak B_3\allowbreak A_3\allowbreak b_2\allowbreak A_2\allowbreak A_1\allowbreak b_7\allowbreak A_7\allowbreak B_4\allowbreak a_5\allowbreak a_6\allowbreak a_7\allowbreak a_1\allowbreak a_1\allowbreak B_1\allowbreak a_2\allowbreak a_3\allowbreak a_4\allowbreak b_4\allowbreak A_4}
\relator{r_{x_{31}}}{A_7\allowbreak B_5\allowbreak a_6\allowbreak a_7\allowbreak a_1\allowbreak A_7\allowbreak A_6\allowbreak A_5\allowbreak b_4\allowbreak a_7\allowbreak B_7\allowbreak a_1\allowbreak a_2\allowbreak B_2\allowbreak A_2\allowbreak b_1\allowbreak a_1\allowbreak B_1\allowbreak A_1\allowbreak b_7}
\relator{r_{x_{32}}}{a_4\allowbreak B_4\allowbreak A_4\allowbreak A_3\allowbreak A_2\allowbreak b_1\allowbreak a_1\allowbreak B_1\allowbreak a_2\allowbreak B_2\allowbreak a_3\allowbreak b_3\allowbreak A_3\allowbreak a_2\allowbreak B_2\allowbreak A_2\allowbreak b_1\allowbreak a_1\allowbreak B_1\allowbreak A_1\allowbreak b_7\allowbreak A_7\allowbreak B_6\allowbreak a_7\allowbreak a_1}
\relator{r_{x_{33}}}{a_1\allowbreak a_1\allowbreak B_1\allowbreak a_2\allowbreak a_3\allowbreak a_4\allowbreak b_4\allowbreak A_4\allowbreak a_3\allowbreak B_3\allowbreak A_3\allowbreak b_2\allowbreak a_3\allowbreak a_4\allowbreak a_5\allowbreak B_6\allowbreak a_7\allowbreak b_7\allowbreak A_7\allowbreak B_4\allowbreak a_5\allowbreak B_6\allowbreak a_7\allowbreak b_7\allowbreak A_7\allowbreak B_7\allowbreak a_1\allowbreak b_1\allowbreak A_1\allowbreak B_1\allowbreak a_2\allowbreak b_2\allowbreak A_2\allowbreak a_3\allowbreak B_3\allowbreak A_3\allowbreak A_2\allowbreak b_1\allowbreak a_1\allowbreak B_1\allowbreak A_1\allowbreak b_7\allowbreak A_7\allowbreak B_6\allowbreak a_7\allowbreak a_1\allowbreak a_3\allowbreak B_3\allowbreak A_3\allowbreak b_2\allowbreak a_3\allowbreak a_4\allowbreak b_4\allowbreak A_4\allowbreak B_3\allowbreak a_4\allowbreak b_4\allowbreak A_4\allowbreak B_4\allowbreak a_5\allowbreak b_5\allowbreak A_4\allowbreak B_3\allowbreak a_4\allowbreak b_4\allowbreak A_4}
\relator{r_{x_{34}}}{a_1\allowbreak a_1\allowbreak B_1\allowbreak a_2\allowbreak a_3\allowbreak a_4\allowbreak b_4\allowbreak A_4\allowbreak a_3\allowbreak B_3\allowbreak A_3\allowbreak b_2\allowbreak a_3\allowbreak B_3\allowbreak A_3\allowbreak A_2\allowbreak b_1\allowbreak a_1\allowbreak B_1\allowbreak A_1\allowbreak b_7\allowbreak A_7\allowbreak B_6\allowbreak a_7\allowbreak a_1\allowbreak a_3\allowbreak B_3\allowbreak A_3\allowbreak b_2\allowbreak a_3\allowbreak a_4\allowbreak b_4\allowbreak A_4\allowbreak B_3\allowbreak a_4\allowbreak b_4\allowbreak A_4}
\relator{r_{x_{35}}}{a_1\allowbreak a_1\allowbreak B_1\allowbreak a_2\allowbreak B_2\allowbreak a_3\allowbreak b_3\allowbreak A_3\allowbreak a_2\allowbreak B_2\allowbreak A_2\allowbreak b_1\allowbreak a_1\allowbreak B_1\allowbreak A_1\allowbreak b_7\allowbreak a_7\allowbreak B_7\allowbreak A_7\allowbreak b_6\allowbreak A_5\allowbreak b_4\allowbreak a_7\allowbreak B_7\allowbreak A_7\allowbreak b_6\allowbreak A_5\allowbreak A_4\allowbreak B_3\allowbreak A_3\allowbreak A_2\allowbreak b_1\allowbreak a_1\allowbreak B_1\allowbreak A_1\allowbreak b_7\allowbreak A_7\allowbreak B_6\allowbreak a_7\allowbreak a_1\allowbreak a_4\allowbreak B_4\allowbreak A_4\allowbreak b_3\allowbreak a_4\allowbreak B_5\allowbreak A_5\allowbreak b_4}
\relator{r_{x_{36}}}{a_3\allowbreak B_3\allowbreak A_3\allowbreak b_2\allowbreak a_3\allowbreak a_4\allowbreak b_4\allowbreak A_4\allowbreak B_3\allowbreak a_4\allowbreak b_4\allowbreak A_4\allowbreak B_4\allowbreak a_5\allowbreak b_5\allowbreak A_4\allowbreak B_3\allowbreak a_4\allowbreak b_4\allowbreak A_4\allowbreak a_3\allowbreak B_3\allowbreak A_3\allowbreak b_2\allowbreak a_3\allowbreak a_4\allowbreak a_5\allowbreak B_6\allowbreak a_7\allowbreak b_7\allowbreak A_7\allowbreak B_4\allowbreak a_5\allowbreak B_6\allowbreak a_7\allowbreak b_7\allowbreak A_7\allowbreak B_7\allowbreak a_1\allowbreak b_1\allowbreak A_1\allowbreak B_1\allowbreak a_2\allowbreak b_2\allowbreak A_2}
\relator{r_{x_{37}}}{a_1\allowbreak a_1\allowbreak B_1\allowbreak a_2\allowbreak a_3\allowbreak a_4\allowbreak b_4\allowbreak A_4\allowbreak B_3\allowbreak a_4\allowbreak b_4\allowbreak A_4\allowbreak B_4\allowbreak a_5\allowbreak b_5\allowbreak A_4\allowbreak B_3\allowbreak A_3\allowbreak A_2\allowbreak b_1\allowbreak a_1\allowbreak B_1\allowbreak A_1\allowbreak b_7\allowbreak A_7\allowbreak B_6\allowbreak a_7\allowbreak a_1\allowbreak a_3\allowbreak B_3\allowbreak A_3\allowbreak b_2\allowbreak a_3\allowbreak a_4\allowbreak a_5\allowbreak B_6\allowbreak a_7\allowbreak b_7\allowbreak A_7\allowbreak B_4\allowbreak a_5\allowbreak B_6\allowbreak a_7\allowbreak b_7\allowbreak A_7}
\relator{r_{x_{38}}}{a_1\allowbreak a_1\allowbreak B_1\allowbreak a_2\allowbreak B_2\allowbreak a_3\allowbreak b_3\allowbreak A_3\allowbreak B_2\allowbreak a_3\allowbreak b_3\allowbreak A_3\allowbreak B_3\allowbreak a_4\allowbreak b_4\allowbreak A_4\allowbreak B_4\allowbreak a_5\allowbreak b_5\allowbreak A_4\allowbreak B_3\allowbreak A_3\allowbreak A_2\allowbreak b_1\allowbreak a_1\allowbreak B_1\allowbreak A_1\allowbreak b_7\allowbreak A_7\allowbreak B_6\allowbreak a_7\allowbreak a_1\allowbreak a_4\allowbreak B_4\allowbreak A_4\allowbreak b_3\allowbreak a_4\allowbreak B_4\allowbreak a_5\allowbreak B_6\allowbreak a_7\allowbreak b_7\allowbreak A_7\allowbreak B_4\allowbreak a_5\allowbreak B_6\allowbreak a_7\allowbreak b_7\allowbreak A_7}
\relator{r_{x_{39}}}{a_4\allowbreak B_4\allowbreak A_4\allowbreak b_3\allowbreak a_4\allowbreak B_4\allowbreak A_4\allowbreak A_3\allowbreak B_2\allowbreak a_3\allowbreak b_3\allowbreak A_3\allowbreak a_2\allowbreak B_2\allowbreak A_2\allowbreak b_1\allowbreak a_1\allowbreak B_1\allowbreak A_1\allowbreak b_7\allowbreak a_1\allowbreak a_1\allowbreak B_1\allowbreak a_2\allowbreak B_2\allowbreak a_3\allowbreak b_3\allowbreak A_3\allowbreak B_2\allowbreak a_3\allowbreak b_3\allowbreak A_3\allowbreak a_4\allowbreak B_4\allowbreak A_4\allowbreak A_3\allowbreak A_2\allowbreak b_1\allowbreak a_1\allowbreak B_1\allowbreak A_1\allowbreak b_7\allowbreak A_7\allowbreak B_6\allowbreak a_7\allowbreak a_1}
\relator{r_{x_{40}}}{a_7\allowbreak B_7\allowbreak A_7\allowbreak b_6\allowbreak A_5\allowbreak b_4\allowbreak a_7\allowbreak B_7\allowbreak A_7\allowbreak b_6\allowbreak A_5\allowbreak b_4\allowbreak A_4\allowbreak B_3\allowbreak a_4\allowbreak b_4\allowbreak A_4\allowbreak a_3\allowbreak B_3\allowbreak A_3\allowbreak b_2\allowbreak a_3\allowbreak a_4\allowbreak B_5\allowbreak A_5\allowbreak b_4\allowbreak a_4\allowbreak B_4\allowbreak A_4\allowbreak b_3\allowbreak a_3\allowbreak B_3\allowbreak A_3\allowbreak b_2\allowbreak a_3\allowbreak B_3\allowbreak A_3\allowbreak b_2\allowbreak a_3\allowbreak a_4\allowbreak b_4\allowbreak A_4\allowbreak B_3\allowbreak a_4\allowbreak b_4\allowbreak A_4}
\relator{r_{x_{41}}}{a_3\allowbreak B_3\allowbreak A_3\allowbreak b_2\allowbreak a_3\allowbreak a_4\allowbreak a_5\allowbreak B_6\allowbreak A_6\allowbreak b_5\allowbreak a_5\allowbreak A_4\allowbreak B_3\allowbreak a_4\allowbreak b_4\allowbreak A_4\allowbreak B_3\allowbreak a_4\allowbreak b_4\allowbreak A_4\allowbreak B_4\allowbreak a_5\allowbreak a_6\allowbreak a_7\allowbreak a_1\allowbreak a_1\allowbreak B_1\allowbreak A_1\allowbreak b_7\allowbreak A_7\allowbreak B_4\allowbreak a_5\allowbreak B_6\allowbreak a_7\allowbreak b_7\allowbreak A_7\allowbreak B_4\allowbreak a_5\allowbreak B_6\allowbreak a_7\allowbreak b_7\allowbreak A_7\allowbreak B_7\allowbreak a_1\allowbreak b_1\allowbreak A_1\allowbreak B_1\allowbreak a_2\allowbreak b_2\allowbreak A_2}
\relator{r_{x_{42}}}{a_3\allowbreak B_3\allowbreak A_3\allowbreak b_2\allowbreak a_3\allowbreak a_4\allowbreak a_5\allowbreak A_4\allowbreak B_3\allowbreak a_4\allowbreak b_4\allowbreak A_4\allowbreak B_3\allowbreak a_4\allowbreak b_4\allowbreak A_4\allowbreak B_4\allowbreak a_5\allowbreak B_6\allowbreak a_7\allowbreak b_7\allowbreak A_7\allowbreak B_7\allowbreak a_1\allowbreak b_1\allowbreak A_1\allowbreak B_1\allowbreak a_2\allowbreak b_2\allowbreak A_2}
\relator{r_{x_{43}}}{a_3\allowbreak B_3\allowbreak A_3\allowbreak b_2\allowbreak a_3\allowbreak a_4\allowbreak B_5\allowbreak a_6\allowbreak b_6\allowbreak A_5\allowbreak b_4\allowbreak A_4\allowbreak B_3\allowbreak a_4\allowbreak b_4\allowbreak A_4\allowbreak B_3\allowbreak a_4\allowbreak b_4\allowbreak A_4\allowbreak a_7\allowbreak B_7\allowbreak A_7\allowbreak b_6\allowbreak A_5\allowbreak b_4\allowbreak a_7\allowbreak B_7\allowbreak a_1\allowbreak b_1\allowbreak A_1\allowbreak A_1\allowbreak A_7\allowbreak A_6\allowbreak A_5\allowbreak b_4\allowbreak a_4\allowbreak B_4\allowbreak A_4\allowbreak b_3\allowbreak a_3\allowbreak B_3\allowbreak A_3\allowbreak b_2}
\relator{r_{x_{44}}}{A_7\allowbreak B_4\allowbreak a_5\allowbreak B_6\allowbreak A_6\allowbreak b_5\allowbreak a_5\allowbreak B_5\allowbreak a_6\allowbreak a_7\allowbreak a_1\allowbreak a_1\allowbreak B_1\allowbreak A_1\allowbreak b_7}
\relator{r_{x_{45}}}{a_3\allowbreak B_3\allowbreak A_3\allowbreak b_2\allowbreak a_3\allowbreak a_4\allowbreak B_5\allowbreak a_6\allowbreak a_7\allowbreak a_1\allowbreak a_1\allowbreak B_1\allowbreak A_1\allowbreak b_7\allowbreak A_7\allowbreak A_4\allowbreak B_3\allowbreak a_4\allowbreak b_4\allowbreak A_4\allowbreak B_3\allowbreak a_4\allowbreak b_4\allowbreak A_4\allowbreak a_7\allowbreak B_7\allowbreak A_7\allowbreak b_6\allowbreak A_5\allowbreak b_4\allowbreak a_7\allowbreak B_7\allowbreak A_7\allowbreak b_6\allowbreak A_6\allowbreak B_6\allowbreak a_7\allowbreak b_7\allowbreak A_7\allowbreak B_7\allowbreak a_1\allowbreak b_1\allowbreak A_1\allowbreak B_1\allowbreak a_2\allowbreak b_2\allowbreak A_2}
\relator{r_{x_{46}}}{a_3\allowbreak B_3\allowbreak A_3\allowbreak b_2\allowbreak a_3\allowbreak a_4\allowbreak B_5\allowbreak a_6\allowbreak a_7\allowbreak a_1\allowbreak a_1\allowbreak B_1\allowbreak A_1\allowbreak b_7\allowbreak A_7\allowbreak A_5\allowbreak b_4\allowbreak A_4\allowbreak B_3\allowbreak a_4\allowbreak b_4\allowbreak A_4\allowbreak B_3\allowbreak a_4\allowbreak b_4\allowbreak A_4\allowbreak a_7\allowbreak B_7\allowbreak A_7\allowbreak b_6\allowbreak A_5\allowbreak b_4\allowbreak a_7\allowbreak B_7\allowbreak A_7\allowbreak b_6\allowbreak A_6\allowbreak A_5\allowbreak b_4\allowbreak a_4\allowbreak B_4\allowbreak A_4\allowbreak b_3\allowbreak a_3\allowbreak B_3\allowbreak A_3\allowbreak b_2}
\relator{r_{x_{47}}}{a_3\allowbreak B_3\allowbreak A_3\allowbreak b_2\allowbreak a_3\allowbreak a_4\allowbreak A_5\allowbreak b_4\allowbreak A_4\allowbreak B_3\allowbreak a_4\allowbreak b_4\allowbreak A_4\allowbreak B_3\allowbreak a_4\allowbreak b_4\allowbreak A_4\allowbreak a_7\allowbreak B_7\allowbreak A_7\allowbreak b_6\allowbreak A_5\allowbreak b_4\allowbreak a_4\allowbreak B_4\allowbreak A_4\allowbreak b_3\allowbreak a_3\allowbreak B_3\allowbreak A_3\allowbreak b_2}
\relator{r_{x_{48}}}{A_7\allowbreak A_5\allowbreak b_4\allowbreak a_7\allowbreak B_7\allowbreak A_7\allowbreak b_6\allowbreak A_6\allowbreak b_5\allowbreak A_5\allowbreak B_5\allowbreak a_6\allowbreak a_7\allowbreak a_1\allowbreak a_1\allowbreak B_1\allowbreak A_1\allowbreak b_7}

\endgroup

\begin{proof}
The relator $R_0$ is the surface relation. The remaining relators are
obtained from the vanishing cycles in
\eqref{eq:Genus7_Signature0LF}, using the generators in
Figure~\ref{fig:Genus7_pi1generators} and the procedure in
Appendix~\ref{app:word-reading}. They are listed in the order of the
curves in Figure~\ref{fig:Genus7_SignatureZeroLF_VanishingCycles}.
Proposition~\ref{prop:pointpush-obstruction} gives the additional
relation $w_7=1$.
\end{proof}

Before imposing this last relation, we have
\[
G_7:=\pi_1(Y\setminus\operatorname{int}\nu(\Sigma_7))
=\left\langle a_1,b_1,\ldots,a_7,b_7
\ \middle|\ R_0,r_{x_1},\ldots,r_{x_{48}}\right\rangle.
\]
Abelianizing this presentation gives
\[
H_1(G_7;\Z)\cong\Z^7\oplus\Z_2. 
\]
Moreover, writing $\operatorname{ab}$ for the exponent vector in
$H_1(\Sigma_7;\Z)$, we have
\begin{align*}
\operatorname{ab}(w_7)={}&-12\operatorname{ab}(r_{x_1})
-37\operatorname{ab}(r_{x_2})+5\operatorname{ab}(r_{x_3})\\
&+19\operatorname{ab}(r_{x_4})+20\operatorname{ab}(r_{x_5})
-13\operatorname{ab}(r_{x_{10}}).
\end{align*}
Thus $w_7$ maps to the identity in $H_1(G_7;\Z)$, although, as we will show shortly, it is nontrivial in $G_7$.

\subsection{Proof of Theorem~\ref{thm:a}}
%======================================================

We prove the theorem with two families of examples, who we present first for genus $9$, and then for  $7$, in the chronological order we discovered them, and also in the order the calculations become more involved.

\subsubsection*{Genus $9$.}
Let $f_1=f\colon X_1=X\to S^2$ be the genus--$9$ Lefschetz fibration described in Section~\ref{subsec:genus9}, and for $n\geq 1$ let
$$
f_n\colon X_n\to S^2
$$
be the untwisted fiber sum of $n$ copies of $f$. These are again symplectic Lefschetz fibrations.

For $f=f_1$, the point-pushing obstruction $\omega(f)\in G=F/N$ is represented by the word $w^{-1}$. If $f$ admitted a section, then Proposition~\ref{prop:pointpush-obstruction} would imply that $\omega(f)=1$ in $G$, and hence that $w=1$ in $G$.

To derive a contradiction, we pass to the maximal class--$2$ nilpotent quotient of $G$. The GAP code used to compute this quotient and evaluate the image of $w$ is recorded in Appendix~\ref{app:gap}. The output shows that the image of $w$ has infinite order in this quotient. Hence no positive power of $w$ is trivial in $G$. In particular, $w\neq 1$, and therefore $f$ admits no section.

The same argument applies to the fiber sums $f_n$. Indeed, the vanishing cycles for $f_n$ are obtained by repeating the vanishing cycles for $f$, so the normal closure $N$ and the group $G=F/N$ are unchanged. The marked lift is correspondingly repeated, and the point-pushing obstruction is represented by
$$
\omega(f_n)=\omega(f)^n=w^{-n},
$$
that is, by the word $w^{-n}$ in $G$. Since no positive power of $w$ is trivial in $G$, Proposition~\ref{prop:pointpush-obstruction} shows that $f_n$ admits no section for every $n\geq 1$. Each copy contributes $48$ critical points, so $\eu(X_n)=48n-32$. These Euler characteristics are strictly increasing, and hence the total spaces are pairwise non-diffeomorphic.

On the other hand, Appendix~\ref{app:gap} also shows that $w$ maps to the identity in the abelianization $H_1(G;\Z)$. Hence $w^n$ maps to the identity in $H_1(G;\Z)$ for every $n\geq 1$. By the exact sequence used in Theorem~\ref{thm:multisection-degrees}, each $f_n$ admits homological sections. Moreover, Theorem~\ref{thm:multisection-degrees} implies that each $f_n$ admits $d$--sections for every $d\geq 2$.

Since $\omega(f_n)$ has infinite order,
Proposition~\ref{prop:spherical-degrees} shows that every
component of every multisection of $f_n$ has positive genus.

\subsubsection*{Genus $7$.}
Let $h_1=h\colon Y_1=Y\to S^2$ be the genus--$7$ fibration of
Section~\ref{subsec:genus7-pi1}, and let $h_n\colon Y_n\to S^2$ be
its untwisted $n$--fold self fiber sum. We use the same fiber marking
in every copy. As above, repeating the vanishing cycles leaves the
group $G_7$ unchanged, and repeating the marked lift gives
\[
\omega(h_n)=\omega(h)^n.
\]
Up to inversion and conjugacy, this is represented by $w_7^n$.
The GAP calculation in Appendix~\ref{app:genus7-gap} gives a map
$\rho_5\colon G_7\to Q_5$ to the maximal integral class--$5$
nilpotent quotient for which
\[
q:=\rho_5(w_7)\neq1,\qquad q^2=1.
\]
Thus $q^n=q\neq1$ for every odd $n$, and
Proposition~\ref{prop:pointpush-obstruction} rules out a section of
$h_n$ for every odd $n\geq1$.

These are symplectic Lefschetz fibrations with
\[
\eu(Y_n)=48n-24,\qquad \sigma(Y_n)=0.
\]
Their Euler characteristics are strictly increasing, so their total
spaces are pairwise non-diffeomorphic. Since $w_7$ maps to the identity
in $H_1(G_7;\Z)$, the same is true of every power $w_7^n$. Consequently,
all the $h_n$ admit pseudosections and $d$--sections for every $d\geq2$.
The class--$5$ calculation does not decide whether the even self fiber
sums admit sections.

This completes the proof of Theorem~\ref{thm:a}.\QED

\begin{corollary}\label{cor:genus9-aspherical}
For every $n\geq1$, the genus--$9$ Lefschetz fibration
$f_n\colon X_n\to S^2$ constructed in the proof of
Theorem~\ref{thm:a} admits compatible symplectically aspherical forms.
\end{corollary}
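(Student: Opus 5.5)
The plan is to apply Corollary~\ref{cor:aspherical-form} to $f_n$. That requires three inputs: the fibers of $f_n$ are irreducible, $\omega(f_n)$ has infinite order, and there is an aspherical class $\eta\in H^2(X_n;\mathbb R)$ with $\langle\eta,[\Sigma]\rangle>0$.

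The infinite-order input is already in the proof of Theorem~\ref{thm:a}. There $\omega(f_n)=w^{-n}$, and the image of $w$ in the class--$2$ nilpotent quotient has infinite order, so $\omega(f_n)$ does too.

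Irreducibility of the fibers should follow from the vanishing cycles. They are all nonseparating: they are images of the nonseparating vanishing cycles of the genus--$3$ relation under embeddings and diffeomorphisms, as noted for the genus--$7$ case and as holds for the genus--$9$ construction of \cite{baykur-hamada:Lefschetz}. A nonseparating vanishing cycle gives an irreducible nodal fiber, so every singular fiber is irreducible.

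For the class $\eta$, the first candidate is $\eta=c\,\mathrm{PD}[\Sigma]$, but it fails: $\langle \mathrm{PD}[\Sigma],[\Sigma]\rangle=\Sigma\cdot\Sigma=0$. Instead I would take $\eta=\mathrm{PD}(A)$, where $A\in H_2(X_n;\Z)$ has $A\cdot[\Sigma]=1$. Such an $A$ exists because $\overline\omega(f_n)=0$, via Theorem~\ref{thm:multisection-degrees} or Corollary~\ref{cor:multisection-divisibility}, which give $m(f_n)=1$. The remaining issue is asphericity of $\mathrm{PD}(A)$, which amounts to the vanishing of $A\cdot u_*[S^2]$ for every spherical class.

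The intended route is to show that spherical classes lie in the span of fiber-supported classes. Every $u\colon S^2\to X_n$ has $\deg(f\circ u)=0$ by Proposition~\ref{prop:spherical-degrees}. From there I would homotope $u$ to miss a regular fiber, and then argue that spherical classes come from $\pi_2$ of the fibration over the disk, which is tied to the fiber. This is the main obstacle. A spherical class of degree zero need not be a multiple of $[\Sigma]$, and $\mathrm{PD}(A)$ pairs with $[\Sigma]$ to $1$, so the argument must exclude that.

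A cleaner alternative is to apply Gompf--Thurston directly with $\eta$ any class positive on the fiber. Then I would note that on spherical classes $S$ the form class $\eta+\kappa\,\mathrm{PD}[\Sigma]$ evaluates to $\eta(S)$. I would modify $\eta$ by subtracting a class vanishing on $[\Sigma]$ that agrees with $\eta$ on the image of the Hurewicz map $h\colon\pi_2(X_n)\to H_2(X_n;\mathbb R)$.

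This is possible when $[\Sigma]$ is not in the real span of that image, which holds because $\mathrm{PD}[\Sigma]$ kills $\mathrm{im}\,h$ but not $A$. Concretely, I would take a functional on $H_2(X_n;\mathbb R)$ that vanishes on $\mathrm{im}\,h$ and equals $1$ on $A$, and use its Poincaré dual as $\eta$. It is aspherical by construction and has $\langle\eta,[\Sigma]\rangle=\eta(\Sigma)\neq0$. To see this, note that $\Sigma\notin\mathrm{im}\,h\otimes\mathbb R$: if it were, pairing with $\mathrm{PD}(A)$ would still be possible, so instead I would choose the functional to be nonzero on $[\Sigma]$ directly. This is possible exactly when $[\Sigma]\notin\mathrm{span}(\mathrm{im}\,h)$.

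To verify that condition, I would use the fact that every spherical class $S$ has $S\cdot A'$ constrained, checking it through intersection with the fiber. If $[\Sigma]$ were a real combination of spherical classes, then pairing with $A$ would give $1=\sum c_i\,S_i\cdot A$. That does not yet contradict anything, so this is the step I expect to require the most care. I would try to settle it using the central infinite-order meridian to show that spherical classes are orthogonal to $A$ modulo fiber components.
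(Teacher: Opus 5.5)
You handle two of the three inputs to Corollary~\ref{cor:aspherical-form} correctly. First, $\omega(f_n)=w^{-n}$ has infinite order by the class--$2$ computation. Second, the fibers are irreducible; the paper checks this directly, since every abelianized vanishing-cycle word in Section~\ref{subsec:genus9-pi1} is nonzero, so no vanishing cycle separates.

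You also correctly reduce the third input to showing $[\Sigma]\notin\operatorname{span}_{\mathbb R}\operatorname{im}(h)$. But that is the real content of the corollary, and you leave it open. Neither route you sketch can close it:
\begin{itemize}
\item Nothing forces degree-zero spherical classes to be orthogonal to $A$, so there is no reason for $\mathrm{PD}(A)$ to be aspherical.
\item The infinite-order central meridian cannot settle the question by itself. It gives $\deg(f\circ u)=0$, i.e.\ $\operatorname{im}(h)\subset[\Sigma]^{\perp}$. But $[\Sigma]\in[\Sigma]^{\perp}$ as well, because $\Sigma\cdot\Sigma=0$.
\end{itemize}
More structurally, Hopf's exact sequence $\pi_2(X_n)\to H_2(X_n)\to H_2(\pi_1(X_n))\to 0$ shows that what you need is a nonzero image of $[\Sigma]$ in $H_2(\pi_1(X_n);\mathbb R)$. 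The class of the central extension $1\to\langle\omega(f_n)\rangle\to G\to\pi_1(X_n)\to 1$ pairs to zero with that image. Indeed, the extension splits over $\pi_1(\Sigma)$: a parallel copy of the fiber lies in $X_n^\circ$ and gives a lift $\pi_1(\Sigma)\to G$.

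The missing idea is to build $\eta$ as a cup product of degree-one classes. Take $p,q\in H^1(X_n;\mathbb Z)=\operatorname{Hom}(\pi_1(X_n),\mathbb Z)$ and any $u\colon S^2\to X_n$. Naturality gives $u^*(p\cup q)=u^*p\cup u^*q=0$, since $H^1(S^2)=0$, so $p\cup q$ is aspherical automatically. Its value on the fiber is computed in the fiber: $\langle p\cup q,[\Sigma]\rangle=\sum_i\bigl(p(a_i)q(b_i)-p(b_i)q(a_i)\bigr)$.

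The paper takes $p_n(b_1)=p_n(b_2)=1$, $q_n(a_1)=-1$, $q_n(a_5)=1$, and sends all other generators to $0$. An exponent count shows these kill $R_0$ and every vanishing-cycle word. They also kill $w^n$, because $w$ vanishes in $H_1(G;\mathbb Z)$, so they define characters of $\pi_1(X_n)$. Then $\langle p_n\cup q_n,[\Sigma]\rangle=1$. With $\eta_n=p_n\cup q_n$, Corollary~\ref{cor:aspherical-form} applies as you intended. It gives compatible symplectically aspherical forms in the classes $\eta_n+\kappa\,\mathrm{PD}[\Sigma]$.
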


\begin{proof}
As shown above, $\omega(f_n)$ has infinite order for every $n\geq1$.
To obtain symplectically aspherical forms, define characters
$p_n,q_n\colon \pi_1(X_n)\to\mathbb Z$ by
\[
p_n(b_1)=p_n(b_2)=1,\qquad
q_n(a_1)=-1,\qquad q_n(a_5)=1,
\]
with all values on the unspecified generators zero. These assignments respect
the vanishing-cycle relations in Section~4.1: the total
exponents of $b_1$ and $b_2$ sum to zero, and those of $a_1$
and $a_5$ agree. They also respect the surface relation.
Thus they define characters of $G$, and descend to
$\pi_1(X_n)=G/\langle w^n\rangle$ because $w$ is central and vanishes
in the abelianization of $G$. Regard $p_n,q_n$ as classes in $H^1(X_n;\mathbb Z)$ and set
\mbox{$\eta_n=p_n\cup q_n$.} The standard cup-product pairing on the
oriented regular fiber $\Sigma$ gives \cite[Example~3.7]{hatcher:AT}
\[
\langle\eta_n,[\Sigma]\rangle
=
\sum_{i=1}^{9}
\bigl(p_n(a_i)q_n(b_i)-p_n(b_i)q_n(a_i)\bigr)
=1.
\]
Moreover, for every map $u\colon S^2\to X_n$, naturality of the cup
product \cite[Proposition~3.10]{hatcher:AT} gives
\[
u^*\eta_n=u^*p_n\cup u^*q_n=0,
\]
since $H^1(S^2;\mathbb Z)=0$. Thus $\eta_n$ is aspherical.

Abelianizing the vanishing-cycle words in Section~4.1 also
shows that each has nonzero homology class in the fiber.
Thus all vanishing cycles are nonseparating and all fibers
are irreducible. Corollary~\ref{cor:aspherical-form} now
gives compatible symplectically aspherical forms in the real cohomology classes
$
p_n\cup q_n+\kappa\,{\rm PD}[\Sigma]
$
for all sufficiently large $\kappa$.  For these forms there are no symplectic spheres, since any
such sphere would have positive symplectic area.
\end{proof}

\begin{remark}
The proof of Theorem~1.3 in \cite{stipsicz:indecomposability} contains
a step asserting that, after obtaining a section of
$X_i\setminus \nu F$ over a disk, the untwisted self fiber sums
$$
X_i \#_F X_i=(X_i\setminus \nu F)\cup (X_i\setminus \nu F),
\qquad i=1,2,
$$
admit sections. In the language of the present paper, such a doubling
carries a point-pushing obstruction: the relevant obstruction class
is the square of the point-pushing class for the relative section
over the disk. For the fibration $(X,f)$ above, the doubled fibration $f_2$ has
obstruction represented by $w^{-2}$. Since Appendix~\ref{app:gap}
shows that $w$ has infinite order in the maximal class--$2$ nilpotent
quotient of $G$, the word $w^{-2}$ is nontrivial in $G$. Thus, an
untwisted double of a relative section need not give a section of
the doubled Lefschetz fibration. The conclusion of \cite[Theorem~1.3]{stipsicz:indecomposability}
nevertheless remains valid by Smith's independent
argument~\cite{smith:sections}.
\end{remark}

\subsection{Epilogue: existence of sections and $2$--sections}

Lefschetz fibrations without sections appear to be rare to the authors
of this article. The obstruction $\omega(f)\in G$ gives a concrete way
to prove that a section does not exist, but using it requires detecting
nontriviality in the nonabelian group $G=F/N$, and in practice the
calculation can be costly. Thus it is useful to know where to look for
good candidates.

The discussion in Section~\ref{subsec:finding-sections} suggests one
source. In constructions obtained by breeding lower-genus relations,
boundary components often record sections before the relevant capping
and cancellation operations are performed. Surviving boundary
components give honest sections, while cancellations of boundary
twists that hold only after forgetting the marked point may leave
residual point-pushing terms. When no boundary component survives to
give an obvious section, and the abelianized obstruction vanishes,
the resulting fibration is a natural candidate for the nonabelian
obstruction, as in the examples considered above. The genus--$7$
calculation reinforces this picture, and we expect that the
higher-genus examples in Section~\ref{subsec:higher-genera} also
admit no sections.

In light of our examples and Theorem~\ref{thm:b}, we are left with
the following question.

\begin{question}\label{q:two-section}
Does every Lefschetz fibration over $S^2$ admit a $2$--section?
\end{question}

By Theorem~\ref{thm:multisection-degrees}, this question is equivalent
to asking whether the divisibility $m(f)$ of the fiber class always
divides $2$. In particular, it would follow from the stronger assertion
that the fiber class of every Lefschetz fibration over $S^2$ is
primitive. This primitivity assertion was claimed in
\cite{stipsicz:singularfibers}, but the proof contains a gap; see the
appendix of \cite{baykur:minimality}. We do not know whether the
assertion itself is true. For achiral Lefschetz fibrations, however,
the analogous statement is false: Gompf's examples show that one
cannot expect such low-degree multisections without the positivity
assumption on the singularities \cite{gompf:sections}.

It is worth noting that nonexistence of a section is a highly unstable
phenomenon under fiber sums, as illustrated by the following argument
due to Seraphina Lee. Suppose a genus--$g$ Lefschetz fibration, with
$g\geq2$, has a positive pointed word $W=\mathcal P_\gamma$, where
$\gamma$ is a simple nonseparating based loop. A single fiber sum
with a same-genus fibration $h$ admitting a section and having full
monodromy (Dehn twists along its vanishing cycles generate $\M(\Sigma_g)$)
admits a section. To see this, write
$\mathcal P_\gamma=t_at_b^{-1}$, where $a,b$ bound an annulus
containing the marked point.
By Hurwitz moves and conjugation, $h$ admits a pointed factorization
$t_aR=1$ without changing its Hurwitz class
\cite[Lemma~6]{auroux:stable}. Replacing $t_a$ by $t_b$ gives
\[
W\,t_bR=(t_at_b^{-1})t_bR=t_aR=1
\quad\text{in }\M(\Sigma_{g,1}).
\]
Since $a,b$ are isotopic after forgetting the marked point,
this lifts the fiber-sum monodromy and gives a section.
This applies to our original genus--$9$ example $f=f_1$.
Moreover, choosing a pointed mapping class $\psi$ reversing
$\gamma$ gives
$WW^\psi=\mathcal P_\gamma\mathcal P_{\gamma^{-1}}=1$.
Thus one can also conclude that a suitable twisted double admits
a section, even though the untwisted double does not.
%More generally, decomposing an arbitrary point push into pushes along
%simple nonseparating based loops and absorbing them successively gives
%a section after sufficiently many fiber sums with $h$.

We still believe that Lefschetz fibrations over the $2$--sphere
without sections are very rare, and with this in mind, our
counter-examples raise many questions about how special the topology
of their total spaces is. In light of
Corollary~\ref{cor:genus9-aspherical}, we ask:

\begin{question}
Can a closed aspherical $4$--manifold admit a Lefschetz
fibration over $S^2$?
\end{question}

We do not know whether the original genus--$9$ total space
$X=X_1$ is aspherical, even though it does not admit a strictly negative sectional curvature by~\ref{rmk:nothyperbolic}. Its untwisted self fiber sums $X_n$,
however, are not aspherical for $n\geq2$. Indeed, since $w$
is central and has infinite order in $G$, the quotient
$\pi_1(X_n)=G/\langle w^n\rangle$ contains the cyclic subgroup
$\langle w\rangle/\langle w^n\rangle\cong \Z_n$.
This rules out asphericity, since the fundamental group of a closed
aspherical manifold is torsion-free
\cite[Proposition~2.45]{hatcher:AT}. 

\enlargethispage{0.1in}
Specifically, it would be interesting to know if a hyperbolic $4$--manifold could admit a Lefschetz fibration; cf.~\cite[Problem~4.95(a)]{K3}.

\bigskip

\appendix
\section{A method for computing a representation of a loop in $\pi_1(\Sigma_g)$}\label{app:word-reading}
% ==========================================================

Since computing representations of vanishing cycles in $\pi_1(\Sigma_g)$ in terms of the standard generators can be quite cumbersome and prone to errors, we introduce a highly efficient method for this computation, which may be of interest in its own right.
For simplicity, we describe this method only for $\Sigma_3$, since the extension to arbitrary $\Sigma_g$ is straightforward.
Let $a_1, b_1, a_2, b_2, a_3, b_3$ be standard geometric generators of $\pi_1(\Sigma_3)$, represented by simple loops meeting only at the common base point, and satisfying $[a_1,b_1][a_2,b_2][a_3,b_3]=1$.
Then we get a dodecagon by cutting along the generators as in Figure~\ref{fig:Genus3_chart}.
We define the \textit{dual chart}, or simply \textit{chart}, associated with the generators as the system of oriented loops $a_i^*$, $b_i^*$ and arcs $[a_i,b_i]^*$ that are dual to the loops $a_i$, $b_i$, and $[a_i,b_i]$, respectively, as drawn on the right of Figure~\ref{fig:Genus3_chart}.
\begin{figure}[htbp]
	\centering
	\includegraphics[height=170pt]{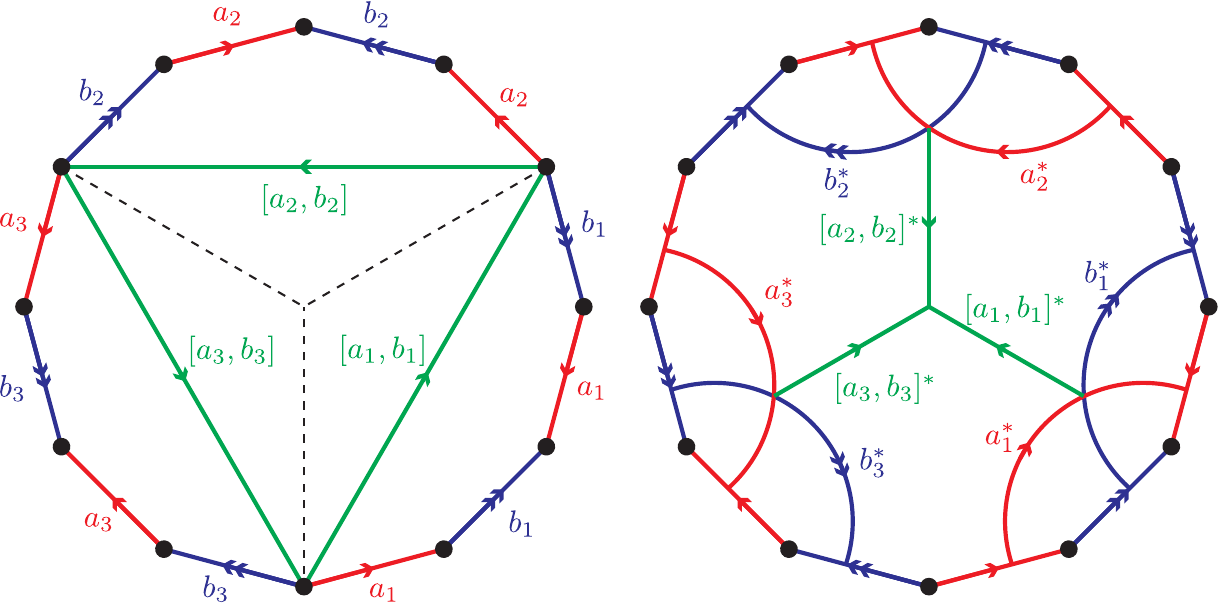}
	\caption{The standard generators of $\pi_1(\Sigma_3)$ (left) and its associated chart (right).} 
	\label{fig:Genus3_chart}
\end{figure}
Put an oriented free loop $\gamma$ in general position, transverse to the chart and disjoint from its vertices and the base point. We wish to express its conjugacy class in $\pi_1(\Sigma_3)$ in terms of $a_i, b_i$.
We demonstrate how to do this with the example in Figure~\ref{fig:Genus3_chart_demo}.
\begin{figure}[htbp]
	\centering
	\includegraphics[height=170pt]{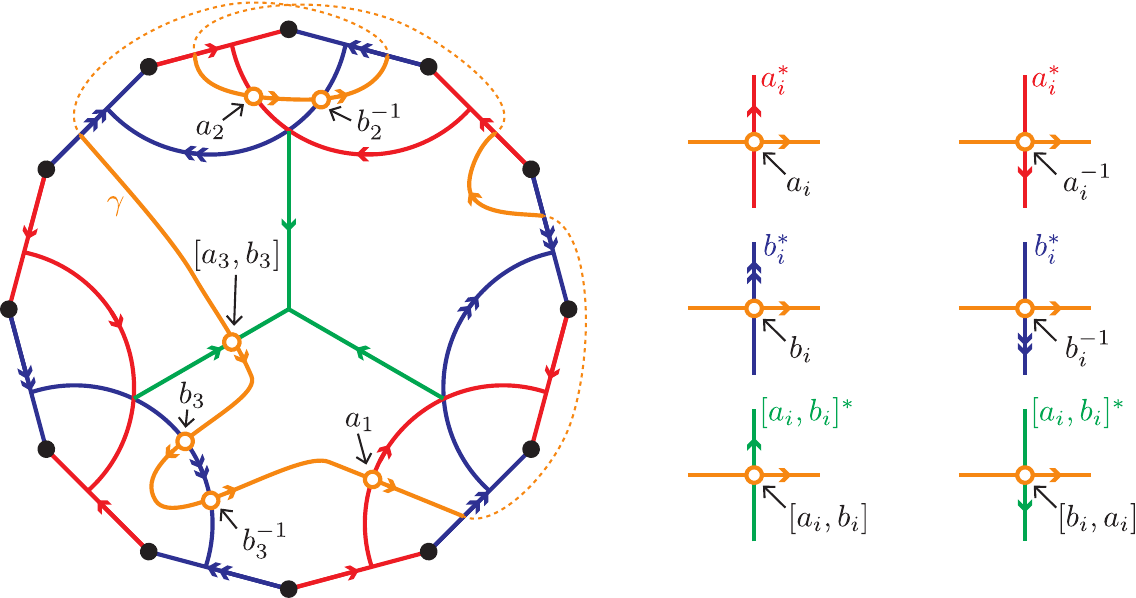}
	\caption{A free loop $\gamma$ on $\Sigma_3$, where the intersections between $\gamma$ and the chart are marked as white vertices. To read off a representation of its conjugacy class in $\pi_1(\Sigma_3)$, traverse $\gamma$ and at each white vertex record the corresponding symbol $a_i^{\pm 1}$, $b_i^{\pm 1}$, or $[a_i,b_i]^{\pm 1}$ according to the intersection patterns shown on the right.
    The resulting representation is $\gamma=a_1 a_2 b_2^{-1} [a_3,b_3] b_3 b_3^{-1}$.} 
	\label{fig:Genus3_chart_demo}
\end{figure}
First, mark all intersection points between $\gamma$ and the chart ($a_i^*$, $b_i^*$, or $[a_i,b_i]^*$) as white vertices.
Next, traverse $\gamma$ according to its orientation, and as we pass over each white vertex record the symbol $a_i^{\pm 1}$, $b_i^{\pm 1}$, or $[a_i,b_i]^{\pm 1}$ according to the intersection patterns on the right.
For this particular example, we obtain $a_1 a_2 b_2^{-1} [a_3,b_3] b_3 b_3^{-1}$ or a cyclic permutation of it depending on where one starts from.
The conjugacy class of this word, or equivalently, its reduction $a_1 a_2 b_2^{-1} [a_3,b_3]$, represents the free homotopy loop $\gamma$.

The disk decomposition proves the reading rule, as illustrated in Figure~\ref{fig:Genus3_chart_demo_proof}.

\begin{figure}[htbp]
	\centering
	\includegraphics[height=130pt]{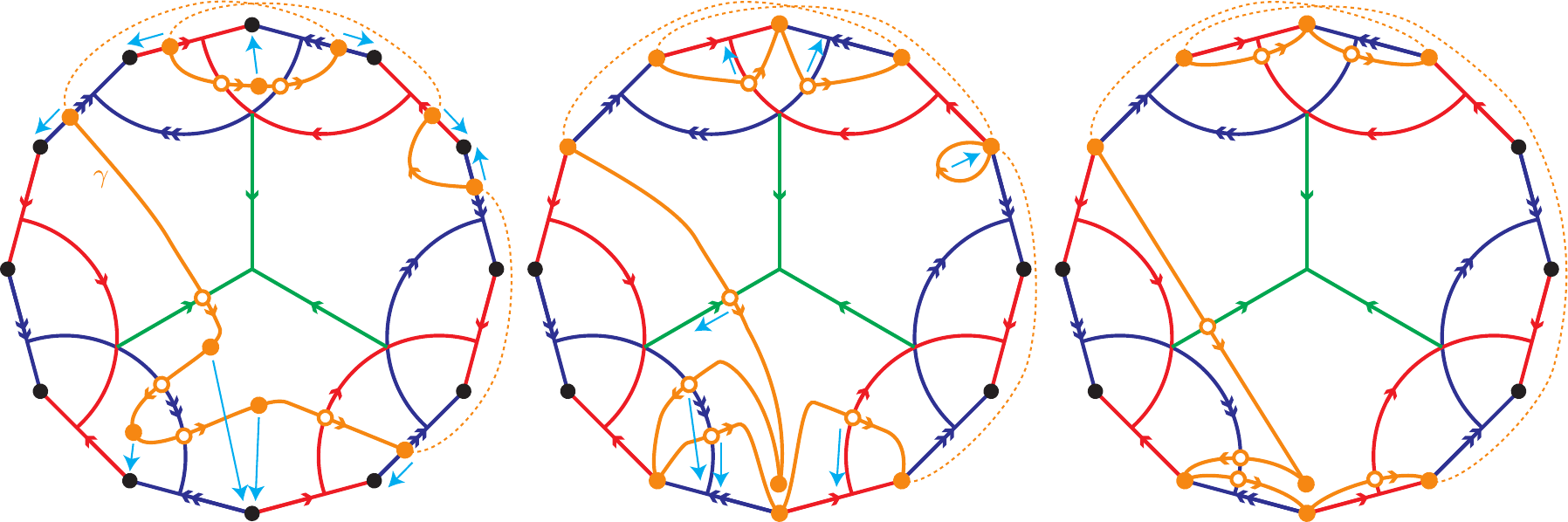}
	\caption{Left to Middle: Mark black vertices on the segments of $\gamma$ and drag the black vertices to the base point.
    Middle to Right: Drag the segments between two adjacent black vertices to $a_i$, $b_i$, $[a_i,b_i]$, or remove it when it is a trivial loop.} 
	\label{fig:Genus3_chart_demo_proof}
\end{figure}
First, we cut the surface along $a_i$, $b_i$, $a_i^*$, $b_i^*$, and $[a_i,b_i]^*$.
The resulting pieces are disks, each with a single distinguished corner mapping to the base point.
Next, we consider the segments of $\gamma$ contained in these disks and mark as black vertices all endpoints of these segments lying on the boundary of the dodecagon.
If endpoints of a segment are both white vertices, then make the middle point of the segment a black vertex too.
In this case, we decompose the segment into two by cutting it at the black vertex.
We then move each black vertex within its disk to this distinguished corner, which maps to the common base point after gluing.
Now we recognize that for each white vertex the adjacent segments make up a based loop in $\Sigma_3$ that is homotopic to either one of $a_i$, $b_i$, $[a_i,b_i]$, or their inverses, which is determined by the intersection patterns described in Figure~\ref{fig:Genus3_chart_demo}.
If there is a segment whose endpoints are both black vertices, then it is a trivial loop within a disk; hence we can remove the segment.
As a result, we obtain the desired concatenated sequence of
$a_i^{\pm1}$, $b_i^{\pm1}$, and $[a_i,b_i]^{\pm1}$, freely homotopic
to $\gamma$. A loop contained in a complementary disk reads the empty
word and is null-homotopic. Changing the starting point cyclically
permutes the word, and choosing a path to the common base point
changes only its conjugacy class. The same disk argument applies in
any genus.

Lastly, we demonstrate how to compute the relators listed in Subsection~\ref{subsec:genus9-pi1} in practice by applying this method to the vanishing cycle $w_5$ as an example.
For the standard generators of $\pi_1(\Sigma_9)$ shown on the left of Figure~\ref{fig:Genus9_pi1generators}, the corresponding dual chart is given on the right.
We put the free loop $w_5$ against the dual chart as in Figure~\ref{fig:Genus9_chart_demo}, together with an orientation.
Reading off the labeled intersections, we obtain the word $a_5 b_7^{-1} [a_8,b_8] a_9 a_1 [b_1,a_1] b_9 a_9$.
This yields the relator $r_{w_5}={a_5\allowbreak B_7\allowbreak a_8\allowbreak b_8\allowbreak A_8\allowbreak B_8\allowbreak a_9\allowbreak a_1\allowbreak b_1\allowbreak a_1\allowbreak B_1\allowbreak A_1\allowbreak b_9\allowbreak a_9}$.
\begin{figure}[htbp]
	\centering
	\includegraphics[height=170pt]{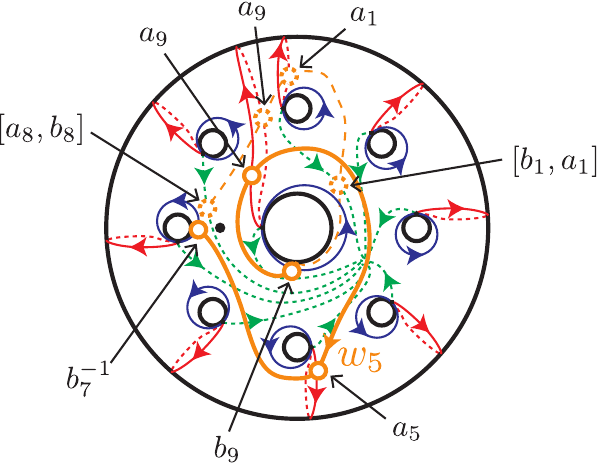}
	\caption{A demonstration of computing the relator corresponding to the vanishing cycle $w_5$ of the genus--$9$ Lefschetz fibration.} 
	\label{fig:Genus9_chart_demo}
\end{figure}

%======================================================

\section{Finding sections of Xiao's $(6,7)$ fibration} \label{app:xiao-section}

Recall that a relatively minimal genus--$2$ Lefschetz fibration is said to be of
\emph{type $(n,s)$} if it has $n$ nonseparating and $s$ separating vanishing cycles.
Here we illustrate the method of Section~\ref{subsec:finding-sections} with
Xiao's holomorphic $(6,7)$ fibration $f\colon X\to S^2$
\cite{xiao:genus2,polizzi:pgq1}, using Szab\'o's explicit monodromy
\cite{szabo:xiao-monodromy}. As observed by Szab\'o (private communication; see also \cite[Section~9.5]{szabo:xiao-monodromy}
and Proposition~\ref{prop:xiao-four-sections} below), it does not have a holomorphic section, and we do not have a section coming from the braid factorization in the hyperelliptic quotient, making it a good example to consider here.
For comparison, Matsumoto's $(6,2)$ fibration has two sections from
Korkmaz's braid lift \cite{korkmaz:noncomplex,korkmaz:invariant} (and four
disjoint sections as shown in \cite{hamada:MCKsections}), while the $(4,3)$
fibration has three sections coming from Weierstrass points \cite{baykur-genus3}.\footnote{The
Lefschetz fibrations of types $(6,2)$, $(4,3)$ and $(6,7)$ are realized by Xiao's holomorphic
fibrations of degrees $2$, $3$ and $4$, respectively 
\cite{xiao:genus2, szabo:xiao-monodromy}.}

\subsubsection*{The based loops.}
Let $a_1,b_1,a_2,b_2$ be the standard geometric generators of
$F=\pi_1(\Sigma_2,p)$. As in Section~\ref{sec:calculations}, write
$A_i=a_i^{-1}$ and $B_i=b_i^{-1}$, and use bars for inverses of other words.
For the monodromy and its hyperelliptic quotient, we follow Szab\'o's
account \cite{szabo:xiao-monodromy}, using his notation $M_i,X_i,Y_i,P_i$
for the vanishing cycles and $r_i,r_i'$ for the six branch points.
The $X_i,Y_i$ twists exchange $r_i,r_i'$; the separating twists fix all
six points. The orbits $\{r_i,r_i'\}$ have size two, so there is no section
consisting of Weierstrass points, that is, fixed points of the
hyperelliptic involution on each smooth fiber. Put
\[
 R_0=a_1b_1A_1B_1a_2b_2A_2B_2,\qquad F=\left\langle a_1,b_1,a_2,b_2\ \middle|\ R_0\right\rangle.
\]
Capping the four boundary components in Figure~\ref{fig:xiao-marking} gives
these generators for $\pi_1(\Sigma_2)$. Following
\cite[Figure~18]{szabo:xiao-monodromy}, choose positive branch meridians
$x_1,\ldots,x_6$ with disjoint access paths from an exterior base point,
ordered $r_1,r'_2,r'_1,r_2,r_3,r'_3$. Let $p$ lie above this base point,
and let $y_j$ be the lift of $x_jx_6$ after filling the ramification points.
Then
\[
 a_1=y_1\bar y_2,\qquad b_1=y_3\bar y_1,\qquad
 a_2=y_3\bar y_2y_1,\qquad b_2=\bar y_4.
\]
Let $t_{c_i}$, $i=1, \ldots, 13$, denote the Dehn twists along the monodromy curves $M_3,\allowbreak X_3,\allowbreak M_0,\allowbreak Y_2,\allowbreak P_2,\allowbreak M_1,\allowbreak X_1,\allowbreak Y_3,\allowbreak P_3,\allowbreak M_2,\allowbreak X_2,\allowbreak Y_1,\allowbreak P_1$, in this order. 
Choose representatives of $c_i$ disjoint from $p$. Their positive Dehn
twists give lifts $\varphi_i=\widetilde t_{c_i}\in\M(\Sigma_{2,1})$ fixing
$p$, as in Section~\ref{subsec:pointpush}. We also write $\varphi_i$ for
their actions on $F$, and put $W=\varphi_{13}\cdots\varphi_1$.
Using the method of Appendix~\ref{app:word-reading}, we read off each
oriented loop as a word in $F$, keeping track of its path from $p$.
The words $v_i=r_{c_i}$, in the same order, are

\medskip
\begingroup
\footnotesize
\setlength{\parindent}{0pt}
\setlength{\parskip}{0.35ex}

\noindent $r_{M_3} = A_1\allowbreak A_2\allowbreak b_1\allowbreak a_1\allowbreak B_1\allowbreak a_2$\par
\noindent $r_{X_3} = A_1\allowbreak b_2\allowbreak b_1\allowbreak A_1\allowbreak B_1\allowbreak a_2$\par
\noindent $r_{M_0} = b_2\allowbreak a_2\allowbreak B_2\allowbreak b_1\allowbreak A_1\allowbreak B_1\allowbreak A_2\allowbreak b_1\allowbreak a_1\allowbreak B_1$\par
\noindent $r_{Y_2} = A_1\allowbreak B_1\allowbreak b_2\allowbreak A_2\allowbreak b_1\allowbreak a_1\allowbreak B_1$\par
\noindent $r_{P_2} = b_1\allowbreak a_1\allowbreak B_1\allowbreak A_1$\par
\noindent $r_{M_1} = a_1\allowbreak a_2\allowbreak B_2\allowbreak A_2\allowbreak A_1\allowbreak b_2$\par
\noindent $r_{X_1} = B_1\allowbreak A_1\allowbreak b_2\allowbreak A_1\allowbreak b_2$\par
\noindent $r_{Y_3} = b_2\allowbreak A_1\allowbreak A_1\allowbreak B_1\allowbreak a_2\allowbreak b_2\allowbreak b_2\allowbreak A_1\allowbreak B_1\allowbreak B_2\allowbreak a_1\allowbreak b_1\allowbreak a_1\allowbreak B_2\allowbreak B_2\allowbreak A_2\allowbreak b_1\allowbreak a_1\allowbreak a_1\allowbreak B_2\allowbreak a_1\allowbreak B_2\allowbreak a_1\allowbreak b_1\allowbreak a_1\allowbreak B_2\allowbreak B_2\allowbreak A_2\allowbreak b_1\allowbreak a_1\allowbreak a_1\allowbreak B_2$\par
\noindent $r_{P_3} = A_1\allowbreak A_1\allowbreak B_1\allowbreak B_2\allowbreak a_1\allowbreak b_1\allowbreak a_1\allowbreak b_2$\par
\noindent $r_{M_2} = A_1\allowbreak A_1\allowbreak B_1\allowbreak A_1\allowbreak b_2\allowbreak b_1\allowbreak a_1\allowbreak a_1\allowbreak a_1\allowbreak B_2$\par
\noindent $r_{X_2} = a_1\allowbreak B_2\allowbreak B_2\allowbreak B_2\allowbreak A_2\allowbreak b_1\allowbreak a_1\allowbreak a_1\allowbreak a_1$\par
\noindent $r_{Y_1} = a_1\allowbreak B_2\allowbreak A_2\allowbreak B_2\allowbreak a_1\allowbreak B_2\allowbreak B_2\allowbreak A_2\allowbreak A_1\allowbreak b_2\allowbreak a_2\allowbreak b_2\allowbreak A_1\allowbreak b_2\allowbreak a_2\allowbreak b_2\allowbreak A_1\allowbreak A_1\allowbreak A_1\allowbreak A_1\allowbreak B_1\allowbreak a_2\allowbreak b_2\allowbreak b_2\allowbreak A_1\allowbreak b_2\allowbreak a_2\allowbreak b_2\allowbreak A_1$\par
\noindent $r_{P_1} = a_1\allowbreak B_2\allowbreak A_2\allowbreak A_1\allowbreak b_2\allowbreak a_2$\par
\endgroup

\begin{figure}[t]
\centering
\includegraphics[width=3in]{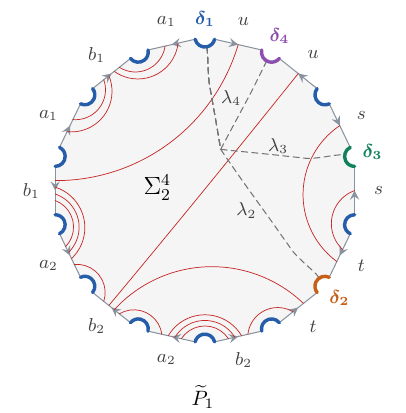}
\caption{The fundamental domain for the surface $\Sigma_2^4$ obtained by gluing the edges as indicated. The blue, orange, green, and purple
boundary arcs form $\delta_1,\delta_2,\delta_3,\delta_4$; the dashed
arcs are $\lambda_2,\lambda_3,\lambda_4$. The red curve is the vanishing cycle $\widetilde P_1$.}
\label{fig:xiao-marking}
\end{figure}

\subsubsection*{The point-pushing obstruction.}
To determine the loop $\gamma$ for which $W=\mathcal P_\gamma$ in
$\M(\Sigma_{2,1})$, we apply the thirteen Dehn twists successively to each
based generator. A segment crossing the annulus supporting a Dehn twist
winds once around its core after the twist; outside the annulus the loop
is unchanged. We read the resulting based loops as words using
Appendix~\ref{app:word-reading}. Put
\[
 e=A_2B_2a_1B_2a_1B_2,\quad r=A_2b_1A_1B_1a_2,\quad H=A_2b_1a_1^2,\quad P=b_2A_1b_2a_2A_1^4B_1a_2b_2.
\]
This gives
\begin{equation}\label{eq:xiao-initial-action}
 W_*(x)=wx\bar w\quad(x=a_1,b_1,a_2,b_2),\qquad
 w=PA_1\bar e a_1e^2\bar r^3Her\bar e A_1.
\end{equation}
Since $(\mathcal P_\gamma)_*(x)=\bar\gamma x\gamma$ and $F$ has trivial
center, $\gamma=\bar w$. Proposition~\ref{prop:pointpush-obstruction} gives
\[
 \pi_1(X)=\left\langle a_1,b_1,a_2,b_2\ \middle|\ R_0,r_{M_3},r_{X_3},r_{M_0},\ldots,r_{P_1},w\right\rangle.
\]
Here $w$ is the relation imposed by the last $2$--handle.

We first calculate in $G=F/\langle\!\langle v_1,\ldots,v_{13}\rangle\!\rangle$.
Put $T=b_1a_1B_1$. The $X_1,X_3$ relations give $b_1=A_1b_2A_1b_2$ and $a_2=TB_2a_1$.
Substituting the latter in the $M_3$ relation $A_2Ta_2=a_1$ gives $T=B_2a_1b_2$
and $a_2=B_2a_1^2$. The $Y_2$ relation then becomes $A_1B_1b_2A_1b_2=1$; together with
$b_2A_1b_2=a_1b_1$, this gives $A_1B_1a_1b_1=1$. Hence $T=a_1=B_2a_1b_2$, and
\begin{equation}\label{eq:xiao-quotient}
 [a_1,b_2]=1,\qquad b_1=A_1^2b_2^2,\qquad a_2=a_1^2B_2.
\end{equation}
Conversely, these substitutions kill every vanishing cycle word and
$R_0$, so $G\cong\Z^2$. They give
$e=B_2^2$, $r=A_1$, $H=A_1^2b_2^3$, $P=a_1B_2$, and $w=1$.
Thus $\omega(f)=1$ and $\pi_1(X)\cong\Z^2$.
Proposition~\ref{prop:spherical-degrees} gives a homotopy section, but
we still need to cancel the loop in $F$, not just its image in $G$.

\subsubsection*{Finding a smooth section.}
We change the lifts fixing $p$, without changing the factorization in
$\M(\Sigma_2)$, to make the point-pushing loop trivial in $F$. As in
Lemma~\ref{lem:pointpush-class}, move a vanishing cycle past $p$ or apply
a point push to it. This changes its isotopy class relative to $p$, and
hence the point-pushing loop, but not its isotopy class on $\Sigma_2$
after forgetting $p$. For based loops $\zeta_i$ and $m_i\in\{0,1\}$, put
\[
 \psi_i=\mathcal P_{\bar\zeta_i}\mathcal P_{v_i^{m_i}}
          \varphi_i\mathcal P_{\zeta_i},
 \qquad \alpha_i=\zeta_i v_i^{m_i}\varphi_i^{-1}(\bar\zeta_i).
\]
Orient $v_i$ so that $\mathcal P_{v_i}\varphi_i$ is the positive Dehn twist
about the other parallel copy, with $p$ between the two copies.
Conjugating by $\mathcal P_{\bar\zeta_i}$ gives another positive Dehn
twist $\psi_i$ fixing $p$ and projecting to $t_{c_i}$. Take $m_i=1$ for
$i=3,6,9,10,13$ and $m_i=0$ otherwise, and choose
\[
 \zeta_4=A_1,\quad \zeta_7=A_1B_1a_2,\quad \zeta_9=\zeta_{12}=a_1,\quad
 \zeta_{10}=A_1^2B_1a_2,\quad \zeta_{11}=A_1^2B_1a_2b_2^2,
\]
with the other $\zeta_i=1$. For example,
$\varphi_4^{-1}(a_1)=B_1b_2A_2b_1a_1B_1$, so $\alpha_4=A_1\varphi_4^{-1}(a_1)=v_4$.
Transport each contribution by the preceding inverse twists before
multiplying. Equivalently, start with $u_0=1$ and calculate
$u_i=u_{i-1}(\varphi_1^{-1}\cdots\varphi_{i-1}^{-1})(\alpha_i)$.
The resulting word is
\begin{equation}\label{eq:xiao-ordered-cancellation}
 \alpha:=\alpha_1\varphi_1^{-1}(\alpha_2)\cdots
 (\varphi_1^{-1}\cdots\varphi_{12}^{-1})(\alpha_{13})=w\quad\text{in }F.
\end{equation}
The reductions use only the surface relation and free cancellation.
Naturality of point pushing gives
\[
 (\psi_{13}\cdots\psi_1)^{-1}=\mathcal P_{\bar\alpha}W^{-1},
 \qquad \psi_{13}\cdots\psi_1=\mathcal P_{\alpha\bar w}=1.
\]
This gives a smooth section by Proposition~\ref{prop:pointpush-obstruction}; cf. \cite[Theorem~3.19]{hillman-pedrotti}.

\begin{figure}[p]
\centering
\makebox[\textwidth][c]{\includegraphics[width=1.05\textwidth]{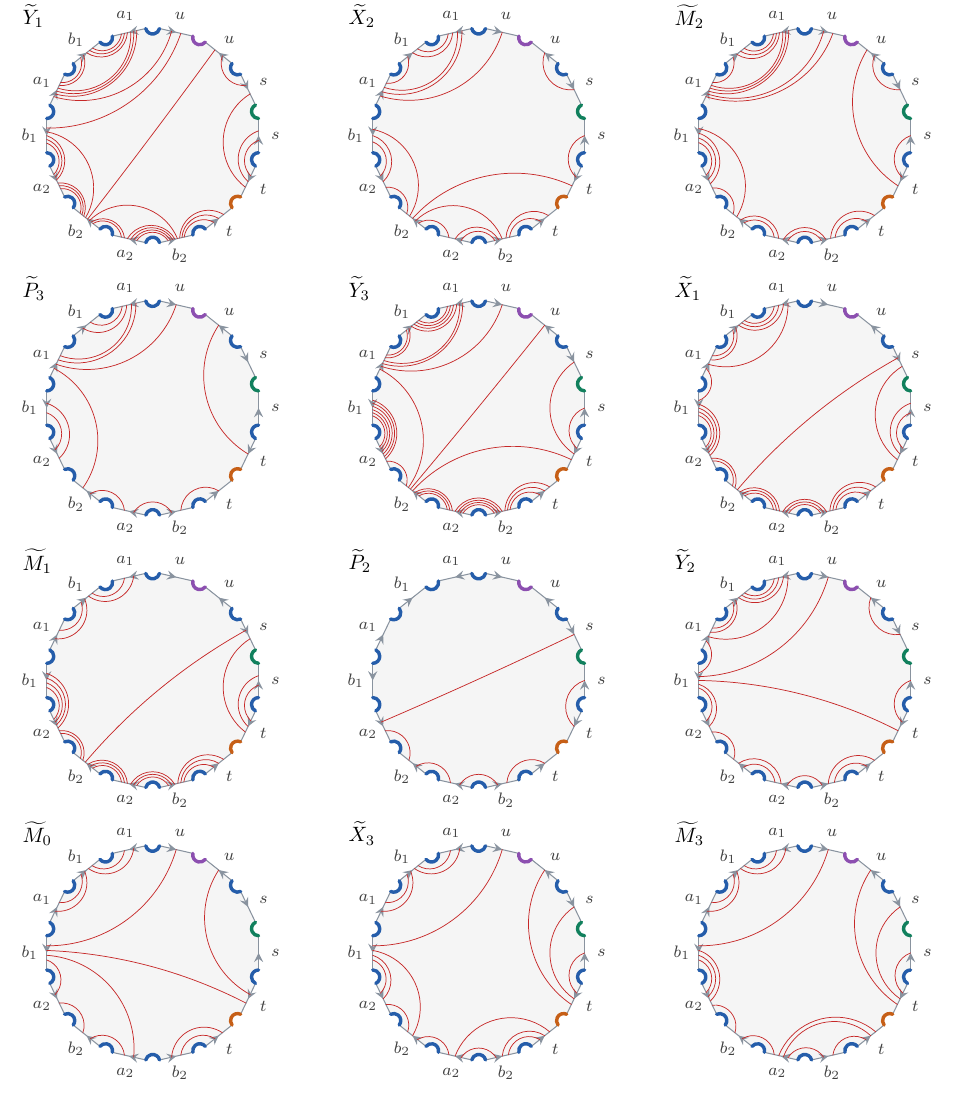}}
\caption{The remaining twelve lifted vanishing cycles in the written
product, from left to right, row by row. The cut surface and side
identifications are as in Figure~\ref{fig:xiao-marking}. The curves are
red; the thicker colored arcs are the four boundary components.}
\label{fig:xiao-lifts}
\end{figure}

\subsubsection*{Disjoint sections and their self-intersections.}
To add a section, keep the earlier marked points for the sections fixed and do the same
calculation in the punctured fiber. All local moves must avoid those
points so that the sections we get are disjoint.
The compatible representatives in Figures~\ref{fig:xiao-marking}
and~\ref{fig:xiao-lifts} give a simultaneous lift, as follows.

\begin{prop}\label{prop:xiao-four-sections}
Xiao's holomorphic $(6,7)$ fibration admits four pairwise disjoint smooth
sections of square $-2$, given by the following lift of Szab\'o's positive factorization:
\begin{equation}\label{eq:xiao-four-boundary}
 t_{\widetilde P_1}t_{\widetilde Y_1}t_{\widetilde X_2}t_{\widetilde M_2}
 t_{\widetilde P_3}t_{\widetilde Y_3}t_{\widetilde X_1}t_{\widetilde M_1}
 t_{\widetilde P_2}t_{\widetilde Y_2}t_{\widetilde M_0}t_{\widetilde X_3}t_{\widetilde M_3}
 =t_{\delta_1}^2t_{\delta_2}^2t_{\delta_3}^2t_{\delta_4}^2
 \quad\text{in }\M(\Sigma_2^4),
\end{equation}
where the vanishing cycles are as in
Figures~\ref{fig:xiao-marking} and~\ref{fig:xiao-lifts}.
The fibration has no holomorphic section. Every smooth section has square
$-2$, and four is the maximal number of pairwise disjoint smooth sections.
\end{prop}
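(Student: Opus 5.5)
The proof splits into four claims: the lifted relation~\eqref{eq:xiao-four-boundary} holds, it yields four disjoint sections of square $-2$, every section has square $-2$, and four is the maximum. Throughout, $X$ is Xiao's fibration, of type $(6,7)$ and genus $2$.

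\textbf{The relation and the four sections.} First check that~\eqref{eq:xiao-four-boundary} holds in $\M(\Sigma_2^4)$. Capping $\delta_1,\ldots,\delta_4$ with disks sends each lifted curve $\widetilde c_i$ to the corresponding Szab\'o curve, so the left side projects to his positive factorization, which is the identity in $\M(\Sigma_2)$. By the capping (Birman) exact sequences, the left side is therefore a product of boundary twists times a braid in the four capped points. I would pin this down by Alexander's method. Take a filling system of arcs joining the boundary components, for example $\lambda_2,\lambda_3,\lambda_4$ together with arcs realizing $a_1,b_1,a_2,b_2$. Apply the thirteen twists in order, using the pictures in Figures~\ref{fig:xiao-marking}--\ref{fig:xiao-lifts} and reading words with Appendix~\ref{app:word-reading}. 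Then check that each arc returns to itself up to boundary twisting, and read off that the twisting exponent is $2$ at each boundary component.

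Given the relation, each boundary component $\delta_j$ yields a section $S_j$. The sections are pairwise disjoint because the boundary components are disjoint. The standard fact for boundary multitwists gives $S_j^2=-m_j$ when the right side is $\prod t_{\delta_j}^{m_j}$, so here $S_j^2=-2$.

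\textbf{Every section has square $-2$.} $X$ is a minimal complex surface of general type; by Xiao's classification it has $K^2=2$, and $e(X)=e(S^2)e(\Sigma_2)+13=-4+13=9$. A section $S$ is a sphere, so adjunction gives $K\cdot S+S^2=-2$, and the task is to show $K\cdot S=0$. I would compute $K$ homologically. Since $G\cong\Z^2$, $H_1(X)\cong\Z^2$, so $b_1=2$ and $b_2=e-2+2b_1=11$. The intersection lattice can then be computed from the handle decomposition, or more directly from Seiberg--Witten theory. Because $X$ is minimal of general type, its only basic classes are $\pm K$, and a $(-2)$-sphere's reflection acts on basic classes, so any $(-2)$-sphere $S$ satisfies $K\cdot S=0$. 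To rule out sections of other squares I would use the adjunction inequality for embedded spheres in $b_2^+>1$ symplectic manifolds; here $b_2^+=2p_g+1=3$, since $p_g=q=1$ for Xiao's surface with $\chi=1$. That inequality forces $S^2<0$, and then $K\cdot S+S^2\ge -2$ for a symplectic representative. I would attempt the full equality through the symplectic form: push $S$ to be symplectic (as above for multisections, via Donaldson--Smith), then use $K\cdot S\ge 0$ for symplectic spheres of negative square in minimal symplectic manifolds of general type, together with Li's results. This gives $S^2\le -2$. For the reverse inequality I would use the fiber-class bound from Xiao's slope and $K\cdot F=2$; this is the step where I am least sure of the route.

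\textbf{No holomorphic section and the maximum of four.} If a holomorphic section existed, it would be a $(-2)$-curve, hence $K$-orthogonal, and would lie in the ADE configuration of curves orthogonal to $K$ in $K^\perp$. Szab\'o's observation (orbits of branch points of size two) excludes this; I would cite him. For maximality, disjoint sections $S_1,\ldots,S_k$ together with the fiber $F$ span a lattice with Gram matrix $\operatorname{diag}(-2,\ldots,-2)$ bordered by $S_j\cdot F=1$ and $F^2=0$. Its negative part orthogonal to $F$ and $K$ has rank $k$. The constraint is $b_2^-=b_2-b_2^+=8$, minus the negative directions already forced by $K$ and the reducible fibers, since the seven separating vanishing cycles each give two $(-1)$-components. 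Bounding $k$ by counting ranks is the main obstacle. I would try instead the Hodge index theorem applied to $K-\sum S_j$ or to $\sum S_j - cF$, using $K\cdot S_j=0$ and $K^2=2$, and aim to derive $k\le 4$ from the requirement that $(\sum S_j)+\tfrac{k}{2}F$ be orthogonal to $F$ with its square bounded below by the lattice constraint.
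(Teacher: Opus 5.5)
Your verification of \eqref{eq:xiao-four-boundary} and the resulting four disjoint $(-2)$--sections follows the paper: Alexander's method applied to the based generators and the joining arcs $\lambda_j$, then reading off the boundary exponents. The other three claims are not established. All three depend on an ingredient your proposal never uses, namely Polizzi's description of the canonical class,
\[
2K=3F_0+G_0-\sum_{i=1}^4 C_i .
\]
Here $G_0$ is a fiber of the Albanese map $X\to T^2$. The $C_1,\dots,C_7$ are elliptic $(-1)$--components of the seven reducible fibers, with $C_1,\dots,C_4$ the degree--$1$ components in the four fibers whose components have degrees $1$ and $3$ over $T^2$.

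Your route gives only $S^2\le-2$. The Seiberg--Witten reflection only says $K\cdot S=0$ \emph{if} $S^2=-2$. Nothing in your argument bounds $K\cdot S$ from above. Slope or Arakelov-type inequalities concern holomorphic curves, not the non-holomorphic smooth sections at issue, so the step you flagged as uncertain is genuinely missing. In the paper, $G_0\cdot S=0$ because a sphere maps with degree zero to $T^2$. With $\epsilon_i=S\cdot C_i\in\{0,1\}$, this gives $2K\cdot S=3-\sum_{i\le4}\epsilon_i$. Adjunction for a symplectic representative, together with parity, then forces $S^2\in\{-3,-2\}$. The value $-3$ is excluded by working in the orthogonal complement of the $C_i$, which has signature $(3,1)$ since $b_2^-=8$. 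There $F_0^{\perp}$ is positive semidefinite, while the projection of $K-2S$ has square $18+8S^2$. This uses $K^2=3$, not $2$: Noether's formula with $\chi(\mathcal O_X)=1$ and Euler characteristic $9$ gives $K^2=3$, consistent with $\sigma=-5$.

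The remaining claims also need this structure.

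\emph{No holomorphic section.} Szab\'o's observation that the orbits $\{r_i,r_i'\}$ have size two only excludes sections made of Weierstrass points. A holomorphic section need not be invariant under the hyperelliptic involution, so citing it does not rule out holomorphic sections. Your ADE remark excludes nothing unless one knows there are no $(-2)$--curves. What works is ampleness of $K$: Polizzi shows $K\cdot C>0$ for every curve. This contradicts $K\cdot S=-2-S^2=0$ once every section is known to have square $-2$.

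\emph{Maximality.} The rank count you sketch does not reach four. For disjoint sections, the differences $S_j-S_1$ span a negative definite lattice with Gram matrix $-2(I+J)$, where $J$ is the all-ones matrix. This lattice sits inside $\{K,F\}^{\perp}$, which has signature $(2,7)$, so you only get $k\le8$. The paper instead uses the combinatorics of the $\epsilon_i$. Since $S^2=-2$, every section meets exactly three of $C_1,\dots,C_4$. For disjoint $S,T$, projecting $S-T$ off the $C_i$ gives
\[
-4+\sum_{i=1}^7\bigl(\epsilon_i(S)-\epsilon_i(T)\bigr)^2\ge0 .
\]
So disjoint sections must miss different members of $C_1,\dots,C_4$, whence at most four.
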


\begin{proof}
Take a base point $q\in\delta_1$ at the common initial point of the joining
paths in Figure~\ref{fig:xiao-marking}. Put $R_4=[a_1,b_1][a_2,b_2]tsu$ in
$\pi_1(\Sigma_2^4,q)=\langle a_1,b_1,a_2,b_2,t,s,u\rangle$.
The paths $\lambda_2,\lambda_3,\lambda_4$ join $q$ to the other boundary
components and determine the based meridians $t,s,u$. Write $\mu_j$ for
the meridian at the endpoint of $\lambda_j$, oriented so that
$t_{\delta_j}(\lambda_j)\simeq\lambda_j\mu_j$.
Let $W_4$ be the product on the left of \eqref{eq:xiao-four-boundary}.
Tracing the loops and joining arcs through these Dehn twists gives
\[
\begin{aligned}
 (W_4)_*(x)&=R_4^2x\bar R_4^2 &&(x=a_1,b_1,a_2,b_2,t,s,u),\\
 W_4(\lambda_j)&\simeq R_4^2\lambda_j\mu_j^2 &&(j=2,3,4).
\end{aligned}
\]
These are the actions of the boundary multitwist in
\eqref{eq:xiao-four-boundary}. The four handle cut arcs and the three
joining arcs cut the surface into a disk, so Alexander's method
\cite{FarbMargalit:modbook} proves the relation. Capping all boundaries recovers Szab\'o's monodromy factorization. 
The boundary exponents give four disjoint sections of square $-2$; the
joining arcs record the framings invisible to the action on the based loops.

For the remaining assertions, let $F_0$ be a fiber of $f$, $G_0$ an
Albanese fiber of $X\to T^2$, and $K$ the canonical class. Following Polizzi
\cite[Propositions~6.15 and~6.18]{polizzi:pgq1}, choose elliptic components
$C_i$ of the seven reducible fibers, with $C_1,\ldots,C_4$ the degree--$1$
components in the four fibers whose components have degrees $1$ and $3$
over $T^2$. For a smooth section $S$, set
$\epsilon_i(S)=S\cdot C_i\in\{0,1\}$. Since $S^2\to T^2$ is
null-homotopic, $G_0\cdot S=0$. Make $S$ symplectic by adding a large
pullback of a base area form, as in Section~\ref{subsec:multisections}.
Polizzi's relation for the canonical class and adjunction give
\[
 2K=3F_0+G_0-\sum_{i=1}^4 C_i,\qquad
 S^2=\frac12\left(\sum_{i=1}^4\epsilon_i(S)-7\right)\in\{-3,-2\}.
\]

Since $\eu(X)=9$, $\sigma(X)=-5$, and $b_1(X)=2$, we have
$b_2^+(X)=3$ and $b_2^-(X)=8$. The seven disjoint $C_i$ have square $-1$,
so their orthogonal complement in $H_2(X;\R)$ has signature $(3,1)$.
Since $F_0$ is a nonzero class of square zero in this complement,
its orthogonal subspace there is nonnegative.
Projecting $K-2S$ orthogonally off the $C_i$ gives a class in this
subspace of square $18+8S^2\geq0$, excluding $-3$.
Thus every section has square $-2$ and misses exactly one of the four
degree--$1$ components.

Polizzi also shows that $K\cdot C>0$ for every holomorphic curve
$C\subset X$ \cite[Proposition~6.3]{polizzi:pgq1}.
A holomorphic section $S$ would instead satisfy
$K\cdot S=-2-S^2=0$ by adjunction, a contradiction.

For disjoint sections $S,T$, projecting $S-T$ in the same way gives
\[
 -4+\sum_{i=1}^7\bigl(\epsilon_i(S)-\epsilon_i(T)\bigr)^2\geq0.
\]
Their component choices must therefore differ in at least four fibers.
They cannot miss the same $C_j$ among $C_1,\ldots,C_4$: both would meet
the degree--$3$ component in that fiber and the degree--$1$ components
in the other three. Their choices could then differ only in the remaining
three reducible fibers, contradicting the inequality. Thus at most four
sections can be pairwise disjoint.
\end{proof}

\begin{samepage}
\begin{remark}\label{rmk:xiao-bisections}
There are three holomorphic $2$--sections of this fibration coming from the ramification locus of the
quotient map by the fiberwise hyperelliptic involution. Each pair
$\{r_i,r_i'\}$ traces a double cover of the base branched at the two
critical points corresponding to $X_i,Y_i$, and hence a sphere $S_i$.
Their self-intersections can be calculated from the boundary twist exponents
in the framed monodromy lifts, with the corrections for the two branches
at Lefschetz critical points, as in
\cite[Theorem~3.6]{baykur-hayano:multisections}; each has square $-3$.
Indeed, these are the rational curves studied by Polizzi
\cite[Corollary~6.8 and Propositions~6.11,~6.18]{polizzi:pgq1}.
The local perturbation in \cite[Remark~3.9]{baykur-hayano:multisections}
moves the branches off $\operatorname{Crit}(f)$, giving three disjoint
$2$--sections of square $-3$ in the convention of
Section~\ref{subsec:multisections}.
\end{remark}
\end{samepage}

\enlargethispage{0.3in}
\begingroup
\lstset{basicstyle=\ttfamily\small\linespread{0.96}\selectfont}
\section{GAP calculation for the genus--$9$ example}\label{app:gap}
% ==========================================================

The following code uses the presentation of $G$ from
Section~\ref{subsec:genus9-pi1}. Here \texttt{F} is free on $18$
generators, and \texttt{rels} consists of the surface relation followed
by the $48$ vanishing-cycle words; the suffix \texttt{p} in a relator
name denotes a prime. The point-pushing word \texttt{w} is not included
among the relators. The code computes its order in the abelianization
and in the maximal class--$2$ nilpotent quotient, checking all defining
relators under the resulting maps.

\medskip
% (lstinputlisting) gap-genus9-sections.g.tex
\begin{lstlisting}
F:=FreeGroup(18);; gens:=GeneratorsOfGroup(F);;
a1:=gens[1];; b1:=gens[2];; a2:=gens[3];; b2:=gens[4];;
a3:=gens[5];; b3:=gens[6];; a4:=gens[7];; b4:=gens[8];;
a5:=gens[9];; b5:=gens[10];; a6:=gens[11];; b6:=gens[12];;
a7:=gens[13];; b7:=gens[14];; a8:=gens[15];; b8:=gens[16];;
a9:=gens[17];; b9:=gens[18];;
A1:=a1^-1;; B1:=b1^-1;; A2:=a2^-1;; B2:=b2^-1;; A3:=a3^-1;; B3:=b3^-1;;
A4:=a4^-1;; B4:=b4^-1;; A5:=a5^-1;; B5:=b5^-1;; A6:=a6^-1;; B6:=b6^-1;;
A7:=a7^-1;; B7:=b7^-1;; A8:=a8^-1;; B8:=b8^-1;; A9:=a9^-1;; B9:=b9^-1;;
R0:=a1*b1*A1*B1*a2*b2*A2*B2*a3*b3*A3*B3*a4*b4*A4*B4*a5*b5*A5*B5*a6*b6*A6*B6*a7*b7*A7*B7*a8*b8*A8*B8*a9*b9*A9*B9;;
rw3:=a1*a2*a3*b3*a4*a6*a7*B7*a8*a9*a1*b1*A1*B1*a2*a3*b3*a4*a5*a6*a7*a8*a9;;
ra3:=a2*B9*A9*b8*a8*B8*a9*a1*b1*A1*B1*a2*a3*b3*a4*a5*a6*a7*a8*a9*a1*B1*a2*a3*b3*a4*a6*a7*B7*a8*a9*a1*b1*A1;;
ra3p:=a1*a2*a3*b3*a4*a6*B9*a1*b1*A1*B1*a2*b2*A2*B2*a3*b3*A3*B3*a4*a5*b5*A5*a6*a7*B7*a8*a9*a1*b1*A1*B1*a2*a3*b3*a4*a5*B5*a6*a7*a8*a9;;
rx3:=a2*B9*A9*b8*a8*B8*a9*a1*b1*A1*B1*a2*a3*b3*a4*a5*B5*a6*a7*a8*a9*a1*B1*a2*a3*b3*a4*a6*B9*a1*b1*A1*B1*a2*b2*A2*B2*a3*b3*A3*B3*a4*a5*b5*A5*a6*a7*B7*a8*a9*a1*b1*A1;;
rb3:=a2*B9*A9*b8*a8*B8*A8*b7*A7*A6*B5*a6*a7*a8*a9*a1*B1*A1*A9*A8*b7*A7*B9*a1*b1*A1*B1*a2*b2*A2*B2*a3*b3*A3*B3*a4*a5*b5*A5*a6*a7*B7*a8*a9*a1*b1*A1;;
rb3p:=a2*B9*A9*b8*a8*B8*a9*a1*b1*A1*B1*a2*a3*b3*a4*a5*B5*A5*A4*A3*A2*B1*a2*a3*b3*a4*a6*B9*a1*b1*A1*B1*a2*b2*A2*B2*a3*b3*A3*B3*a4*a5*b5*A5*A4*B3*A3*A2*b1;;
ry3:=A2*B1*a2*a3*b3*a4*a5*B5*A5*A4*b3*a3*B3*A3*b2*a2*B2*A2*b1*a1*B1*A1*b9*a7*B7*a8*a9*a1*b1*a2*a3*a4*a5*b5*a6*a7*B7*a8*b8*A8*B8*a9*b9;;
rc3:=A2*B1*a2*a3*b3*a4*a6*a7*B7*a8*a9*a1*b1*a2*a3*a4*a5*a6*a7*B7*a8*b8*A8*B8*a9*b9;;
rc3p:=a1*a2*a3*a4*a5*b5*a6*a7*B7*a8*a9*a1*b1*A1*B1*a2*a3*b3*a4*a5*B5*A5*A4*b3*a3*B3*A3*b2*a2*B2*A2*b1*a1*B1*A1*b9*a7*B7*a8*a9;;
rz3:=a1*a2*a3*a4*a5*a6*a7*B7*a8*a9*a1*b1*A1*B1*a2*a3*b3*a4*a6*a7*B7*a8*a9;;
rx1:=a2*B2*a3*b3*A3*B9*A9*b8*a8*B8*A8*b7*a8*B8*a9*a1*b1*A1;;
rx2:=B9*a1*b1*A1*B1*a2*b2*A2*B2*a3*b3*A3*a4*B4*a5*b5*A5*a6*B6*a7*b7*A7;;
rw4:=B9*A9*b8*a8*B8*a9*a1*a2*a3*b3*a4*a3*B3*A3*b2*a2*B2*a3*b3*a4*a5*a6;;
ra4:=B9*A9*b8*a8*B8*A8*b7*a8*B8*a9*a1*a2*a3*b3*a4*a3*B3*A3*b2*a2*B2*A2*b1*a2*B2*a3*b3*a4*a5*a6;;
ra4p:=B9*A9*b8*a8*B8*a9*a1*a2*a3*b3*a4*B4*a5*b5*A5*a3*B3*A3*b2*a2*B2*a3*b3*a4*a5*a6*B6*a7*b7*A7;;
rx4:=B9*A9*b8*a8*B8*A8*b7*a8*B8*a9*a1*a2*a3*b3*a4*B4*a5*b5*A5*a3*B3*A3*b2*a2*B2*A2*b1*a2*B2*a3*b3*a4*a5*a6*B6*a7*b7*A7;;
rb4:=B9*A9*b8*a8*B8*A8*b7*B8*a9*a1*b1*A1*B1*a2*b2*A2*B2*a3*b3*A3*B4*a5*b5*A5*a3*B3*A3*b2*a2*B2*A2*b1*B2*a3*b3*A3*B6*a7*b7*A7;;
rb4p:=a2*B2*a3*b3*a4*a5*a6*B6*a7*b7*A7*A6*A5*A4*A3*A2*A1*A9*A8*b7*a8*B8*a9*a1*a2*a3*b3*a4*B4*a5*b5*A5*A4*B3*A3*A2*b1;;
ry4:=B1*a2*a3*b3*a4*a5*B5*A5*b4*a3*B3*A3*b2*a2*B2*A2*b1*a1*B1*A1*A9*b8*B7*a8*a9*a1*a2*a3*a4*a5*a6*a7*B7*A7*b6*a3*B3*A3*b2;;
rc4:=B1*a2*a3*b3*a4*a3*B3*A3*b2*a2*B2*A2*b1*a1*B1*A1*A9*b8*B7*a8*a9*a1*a2*a3*a4*a5*a6*a3*B3*A3*b2;;
rc4p:=a1*a2*a3*a4*a5*a6*a7*B7*A7*b6*a3*B3*A3*b2*a2*B2*a3*b3*a4*a5*B5*A5*b4*a3*B3*A3*b2*a2*B2*A2*b1*a1*B1*A1*A9*b8*a8*B8*a9;;
rz4:=a1*a2*a3*a4*a5*a6*a3*B3*A3*b2*a2*B2*a3*b3*a4*a3*B3*A3*b2*a2*B2*A2*b1*a1*B1*A1*A9*b8*a8*B8*a9;;
ry1:=B1*A1*A9*b8*B7*a8*a9*a1*a2*a3*B3*A3*b2;;
ry2:=B3*a4*a5*a6*a7*B7*A7*b6*B5*A5*b4;;
rw5:=a5*B7*a8*b8*A8*B8*a9*a1*b1*a1*B1*A1*b9*a9;;
ra5:=a5*B7*a8*a9*a1*a2*a1*B1*A1*b9*a9*B9*a1*b1*A1;;
ra5p:=a5*B5*A5*b4*a4*B4*a5*a6*B7*a8*b8*A8*B8*a9*a1*a2*b2*A2*B2*a3*b3*A3*B3*a4*b4*A4*B4*a5*b5*A5*a9;;
rx5:=a5*B5*A5*b4*a4*B4*a5*a6*B7*a8*a9*a1*a2*B1*a2*b2*A2*B2*a3*b3*A3*B3*a4*b4*A4*B4*a5*b5*A5*a9*B9*a1*b1*A1;;
rb5:=B1*A1*A9*b8*a8*B8*A8*b7*a7*B7*A7*b6*a6*B6*A6*b5*a9*B9*a1*b1*A1*A9*B5*A5*b4*a4*B4*a5*a6*B7*a8*a9*a1*a2;;
rb5p:=B9*a1*b1*A1*a5*B5*A5*b4*a4*B4*a5*a6*A5*a3*B3*A3*b2*a2*B2*A2*A1*A9*b8*a8*B8*a9*a1*a2*B1*a2*b2*A2*B2*a3*b3*A3*B3*a4*b4*A4*B4*a5*b5*A5;;
ry5:=a1*B1*A1*b9*B5*a6*b6*A6*B6*a7*b7*A7*B7*a8*b8*A8*B8*a9*a1*b1*A2*A1*A9*b8*A8*B8*a9*a1*a2*b2*A2*B2*a3*b3*A3*a5*A6*A5*b4*A4*B4*a5*b5*a9;;
rc5:=a1*B1*A1*b9*a9*B9*a1*b1*A2*A1*A9*b8*A8*B8*a9*a1*a2*b2*A2*B2*a3*b3*A3*a5*a9;;
rc5p:=B5*a6*b6*A6*B6*a7*b7*A7*B7*a8*b8*A8*B8*a9*a1*a2*b2*A2*B2*a3*b3*A3*a5*A6*A5*b4*A4*B4*a5*b5*a9;;
rz5:=a9*B9*a1*a2*b2*A2*B2*a3*b3*A3*a5*a9;;
rz1:=A9*a8*B8*a9*a1*a2*a2*B2*A2*b1*A1;;
rz2:=a4*B4*a5*a6*a6*B6*A6*b5*A5;;
rw6:=B9*A9*b8*a8*B8*A8*b7*a7*B9*a1*b1*A1*B1*a2*b2*A2*B2*a3*b3*A3*B3*a4*b4*A4*B4*a5*a6*a7*a8*a9*a1;;
ra6:=B9*A9*b8*a8*B8*A8*b7*a7*B7*a8*b8*A8*B9*a1*b1*A1*B1*a2*b2*A2*B2*a3*b3*A3*B3*a4*b4*A4*B4*a5*a6*a7*a8*a9*a1*B1*a2*b2*A2;;
ra6p:=B9*A9*b8*a8*B8*A8*b7*a7*B7*A7*b6*a7*B9*a1*b1*A1*B1*a2*b2*A2*B2*a3*b3*A3*B3*a4*b4*A4*a5*B5*a6*a7*a8*a9*a1;;
rx6:=B9*A9*b8*a8*B8*A8*b7*a7*B7*A7*b6*a7*B7*a8*b8*A8*B9*a1*b1*A1*B1*a2*b2*A2*B2*a3*b3*A3*B3*a4*b4*A4*a5*B5*a6*a7*a8*a9*a1*B1*a2*b2*A2;;
rb6:=A9*A8*A7*A6*b5*A5*B4*a5*a6*a7*a8*B8*A8*b7*A7*B6*a7*a8*a9*a1*a2*B2*A2*b1*A1;;
rb6p:=a2*B2*A2*b1*B9*A9*b8*a8*B8*A8*b7*a7*B7*A7*b6*a6*B6*A6*b5*a4*B4*A4*b3*a3*B3*A3*b2*a2*B2*A2*b1*a1*B1*A1*b9*a8*B8*A8*b7*B6*a7*b7*A7*B7*a8*b8*A8*B8*a9*b9;;
ry6:=B9*A9*b8*a8*B8*A8*b7*a7*B7*A7*b6*a6*B6*A6*b5*B4*a5*a6*a7*a8*B8*A8*b7*B6*a7*a8*a9*a1*a2*B2*A2*b1;;
rc6:=B9*A9*b8*a8*B8*A8*b7*a7*B7*A7*b6*a6*B6*A6*b5*a5*B5*a6*a7*a8*B8*A8*b7*a7*B7*a8*a9*a1*a2*B2*A2*b1;;
rc6p:=B9*A9*b8*a8*B8*A8*b7*a7*B7*A7*b6*a6*B6*A6*b5*B4*a5*a6*a7*B6*a7*a8*a9*a1;;
rz6:=B9*A9*b8*a8*B8*A8*b7*a7*B7*A7*b6*a6*B6*A6*b5*a5*B5*a6*a7*a7*B7*a8*a9*a1;;
rw1:=B1*a2*b2*B9*A9*b8*a8*a9;;
rw2:=B9*a1*b1*A1*B1*a2*b2*A2*B2*a3*b3*A3*B3*a4*b4*B5*a6*b6;;
rels:=[R0,rw3,ra3,ra3p,rx3,rb3,rb3p,ry3,rc3,rc3p,rz3,rx1,rx2,rw4,ra4,ra4p,rx4,rb4,rb4p,
  ry4,rc4,rc4p,rz4,ry1,ry2,rw5,ra5,ra5p,rx5,rb5,rb5p,ry5,rc5,rc5p,rz5,rz1,rz2,rw6,ra6,
  ra6p,rx6,rb6,rb6p,ry6,rc6,rc6p,rz6,rw1,rw2];;
w:=A9*B9*a1*b1*A1*B1*a2*b2*A2*B2*a3*b3*A3*B3*a4*b4*A4*B5*a6*b6*A6*B7*a8*a9*a1*a2*A1;;
G:=F/rels;; LoadPackage("nq");;
for c in [1,2] do
  hom:=NqEpimorphismNilpotentQuotient(G,c);
  imgs:=List(GeneratorsOfGroup(G),x->Image(hom,x));
  image:=MappedWord(w,gens,imgs);
  ok:=ForAll(rels,r->IsOne(MappedWord(r,gens,imgs)));
  Print("Class ",c,": relators = ",ok,", order(w) = ",Order(image),"\n");
od;
\end{lstlisting}

\bigskip
The recorded results, in this format, are:
\begin{lstlisting}
Class 1: relators = true, order(w) = 1
Class 2: relators = true, order(w) = infinity
\end{lstlisting}

\section{GAP calculation for the genus--$7$ example}
\label{app:genus7-gap}
% ==========================================================

The following code uses the presentation of $G_7$ from
Section~\ref{subsec:genus7-pi1}. Here \texttt{F} is free on $14$
generators, and \texttt{rels} consists of the surface relation followed
by the $48$ vanishing-cycle words. The words
\texttt{y1},\ldots,\texttt{y48} represent $x_1,\ldots,x_{48}$.
The point-pushing word \texttt{w} is not included among the relators.

\medskip
% (lstinputlisting) gap-genus7-data.g
\begin{lstlisting}
F:=FreeGroup(14);; gens:=GeneratorsOfGroup(F);;
a1:=gens[1];; b1:=gens[2];; a2:=gens[3];; b2:=gens[4];;
a3:=gens[5];; b3:=gens[6];; a4:=gens[7];; b4:=gens[8];;
a5:=gens[9];; b5:=gens[10];; a6:=gens[11];; b6:=gens[12];;
a7:=gens[13];; b7:=gens[14];;
A1:=a1^-1;; B1:=b1^-1;; A2:=a2^-1;; B2:=b2^-1;; A3:=a3^-1;; B3:=b3^-1;;
A4:=a4^-1;; B4:=b4^-1;; A5:=a5^-1;; B5:=b5^-1;; A6:=a6^-1;; B6:=b6^-1;;
A7:=a7^-1;; B7:=b7^-1;;
R0:=a1*b1*A1*B1*a2*b2*A2*B2*a3*b3*A3*B3*a4*b4*A4*B4*a5*b5*A5*B5*a6*b6*A6*B6*a7*b7*A7*B7;;
y1:=B3*A3*B2*A2*b1*a1*B1*A1*A7*b6*a6*a7*a1*B1*a2*b2*A7*B6*A6*b5*A5*a7;;
y2:=A7*a5*B5*a6*a7*a1*b1*A1*A7*A6*B6*a7*a1*b1*A1*a2*B2*A2*b1*a1*B1*A1*b7;;
y3:=B3*A3*B2*A2*b1*a1*B1*A1*b7*A7*a5*B5*a6*a7*a1*a2*b2*A7*A6*B6*a7*a1*b1*A1*a2*B2*A2*b1*a1*B1*A1*b7;;
y4:=A7*B6*a7*a1*b1*A1*B2*A2*b1*a2*a3*b3;;
y5:=A7*B6*a7*a1*b1*A1*A7*A6*B6*a7*a1*b1*A1*B2*A2*b1*a2*B2*A2*b1*a1*B1*A1*b7*A7*a5*B5*a6*a7*a1*a2*a3*b3;;
y6:=A7*A6*B6*a7*a1*b1*A1*B2*A2*b1*a1*B1*A1*b7*A7*a5*B5*a6*a7*a1*a2*a3*b3;;
y7:=B7*a1*b1*A1*B1*a2*b2*A2*a1*B1*A1*b7*A7*a5*B5*a6*a7*a1*B1*A1*A7*b6*A6*a7;;
y8:=a1*B1*A1*A7*b6*a7*B7*a1*b1*A1*B1*a2*b2*A2*A2*B1*a2*a3*b3;;
y9:=A7*B6*a7*a1*b1*a2*a3*b3*a3*a4*b4*a5*B5*a6*b6*A6*B6*a7*b7*A7*a5*B5*A5*b4*a4*B4*A4*b3*a3*B7*a1*b1*A1*B1*a2*b2*A2*B2*A2*b1*a2*B2*A2*b1*a1*B1*A1*b7*A7*B6*a7*a1*b1*A1*a3*a4*b4*A4*a5*B5*a6*b6*A6*A5*A4*B7*a1*b1*A1*B1*a2*b2*A2*B2*a3*b3;;
y10:=A7*B6*a7*a1*b1*a2*a3*b3*a3*B7*a1*b1*A1*B1*a2*b2*A2*B2*A2*b1*a2*B2*A2*b1*a1*B1*A1*b7*A7*B6*a7*a1*b1*A1*a3*a4*b4*A4*B7*a1*b1*A1*B1*a2*b2*A2*B2*a3*b3;;
y11:=A3*B3*a4*b4*A4*B4*a5*b5*A5*a7*B7*A7*b6*a6*B6*A6*b5*A5*B4*A4*B7*a1*b1*A1*B1*a2*b2*A2*B2*A2*b1*a2*B2*A2*b1*a1*B1*A1*b7*A7*B6*a7*a1*b1*A1*B3*A3*b2*a2*B2*A2*b1*a1*B1*A1*b7*a4*a5*a6*B6*A6*b5*A5*B7*a1*b1*A1*B1*a2*b2*A2*B2*a3*b3*A7*B6*a7*a1*b1*a2*a2*B2*A2*b1*a1*B1*A1*b7;;
y12:=B7*a1*b1*A1*B1*a2*b2*A2*a3*a4*b4*A4*a5*B5*a6*b6*A6*A5*A4*B7*a1*b1*A1*B1*a2*b2*A2*B2*a3*b3*a3*a4*b4*a5*B5*a6*b6*A6*B6*a7*b7*A7*a5*B5*A5*b4*a4*B4*A4*b3*a3;;
y13:=A7*B6*a7*a1*b1*a2*a3*a4*b4*A4*a5*B5*a6*b6*A6*A5*A4*B7*a1*b1*A1*B1*a2*b2*A2*B2*A2*b1*a2*B2*A2*b1*a1*B1*A1*b7*A7*B6*a7*a1*b1*A1*a3*a4*b4*a5*B5*a6*b6*A6*B6*a7*b7*A7*a5*B5*A5*b4*a4*B4*A4*b3*a3;;
y14:=A7*B6*a7*a1*b1*a2*a2*B2*A2*b1*a1*B1*A1*b7*A3*B3*a4*b4*A4*a5*B5*a6*b6*A6*A5*A4*B7*a1*b1*A1*B1*a2*b2*A2*B2*A2*b1*a2*B2*A2*b1*a1*B1*A1*b7*A7*B6*a7*a1*b1*A1*B3*A3*b2*a2*B2*A2*b1*a1*B1*A1*b7*a4*a5*B5*a6*b6*A6*B6*a7*b7*A7*a5*B5*A5*b4*a4*B4*A4*b3*a3;;
y15:=B3*A3*b2*a2*B2*A2*b1*a1*B1*A1*b7*a4*B4*A4*A3*a2*B2*A2*b1*a1*B1*A1*b7*A7*B6*a7*a1*b1*a2*a2*B2*A2*b1*a1*B1*A1*b7*A3*B3*A3*A2*b1*a2*B2*A2*b1*a1*B1*A1*b7*A7*B6*a7*a1*b1*A1;;
y16:=B7*a1*b1*A1*B1*a2*b2*A2*a3*a4*b4*A4*a7*B7*A7*b6*a6*B6*A6*b5*a7*B7*A7*b6*a6*B6*A6*b5*A5*A4*B7*a1*b1*A1*B1*a2*b2*A2*B2*a3*b3*a3*a4*a5*a6*B6*A6*b5*A5*a4*B4*A4*b3*a3;;
y17:=B7*a1*b1*A1*B1*a2*b2*A2*a3*a4*b4*a5*B5*a6*b6*A6*B6*a7*a1*b1*A1*A7*B6*A6*A5*b4*a5*B5*a6*b6*A6*A5*A4*B7*a1*b1*A1*B1*a2*b2*A2*B2*a3*a4*b4*A4*a5*B5*a6*b6*a7*a1*a2*B2*A2*b1*a1*B1*A1*b7*A7*B5*a6*b6*A6*B6*a7*b7*A7*a5*B5*A5*b4*a4*B4*A4*b3*a3;;
y18:=B7*a1*b1*A1*B1*a2*b2*A2*a3*a4*b4*a5*B5*a6*b6*A6*A5*A4*B7*a1*b1*A1*B1*a2*b2*A2*B2*a3*a4*b4*A4*a5*B5*A5*b4*a4*B4*A4*b3*a3;;
y19:=A3*B3*a4*b4*A4*a5*B5*a6*b6*a7*a1*a2*B2*A2*b1*a1*B1*A1*b7*A7*B5*a6*b6*A6*B6*a7*b7*A7*a4*B4*A4*A3*b2*a2*B2*A2*b1*a1*B1*A1*b7*a4*a5*B5*a6*b6*A6*B6*a7*a1*b1*A1*A7*B6*A6*A5*A4*A3*a2*B2*A2*b1*a1*B1*A1*b7;;
y20:=A7*B5*a6*b6*A6*B6*a7*a1*b1*A1*A7*B6*A6*A5*b4*a5*B5*a6*b6*a7*a1*a2*B2*A2*b1*a1*B1*A1*b7;;
y21:=A7*A5*A4*B7*a1*b1*A1*B1*a2*b2*A2*B2*a3*a4*b4*A4*a7*B7*A7*b6*a6*B6*A6*b5*a7*B7*a1*b1*A1*A7*B6*A6*A5*b4*a4*B4*A4*b3*a3*B7*a1*b1*A1*B1*a2*b2*A2*a3*a4*a5*a6*a7*a1*a2*B2*A2*b1*a1*B1*A1*b7;;
y22:=A7*a6*B6*A6*b5*A5*A4*B7*a1*b1*A1*B1*a2*b2*A2*B2*a3*a4*b4*A4*a7*B7*A7*b6*a6*B6*A6*b5*a7*B7*a1*b1*A1*A7*B6*A6*b5*A5*a4*B4*A4*b3*a3*B7*a1*b1*A1*B1*a2*b2*A2*a3*a4*a5*a6*a7*a1*a2*B2*A2*b1*a1*B1*A1*b7;;
y23:=B7*a1*b1*A1*B1*a2*b2*A2*a3*a4*a5*a6*B6*A6*b5*A5*A4*B7*a1*b1*A1*B1*a2*b2*A2*B2*a3*a4*b4*A4*a7*B7*A7*b6*a6*B6*A6*b5*a4*B4*A4*b3*a3;;
y24:=A7*a6*B6*A6*b5*a7*B7*a1*b1*A1*A7*B6*A6*b5*A5*B4*a5*a6*a7*a1*a2*B2*A2*b1*a1*B1*A1*b7;;
y25:=A7*B4*a5*a6*a7*a1*a3*B3*A3*A2*b1*A1*A7*B6*A6*b5*B6*a7*a1*a2*a3*a4*b4*A4;;
y26:=A7*B6*A6*b5*B6*a7*a1*a2*B2*A2*b1*a1*B1*A1*b7*A7*B4*a5*a6*a7*a1;;
y27:=a4*B4*A4*A3*A2*A1*b7*A7*B4*a5*a6*a7*a1*a1*B1*a2*a3*b3*A3*A1*A7*A6*b5*B6*a7*a1*a2*B2*A2*b1*a1*B1*A1*b7;;
y28:=A7*B6*a7*a1*a3*B3*A3*A2*b1*a1*B1*a2*a3*a4*b4*A4;;
y29:=A7*B6*A6*b5*B6*a7*a1*a3*B3*A3*A2*b1*a1*B1*A1*b7*A7*B4*a5*a6*a7*a1*a1*B1*a2*a3*a4*b4*A4;;
y30:=A1*A7*A6*b5*B6*a7*a1*a3*B3*A3*b2*A2*A1*b7*A7*B4*a5*a6*a7*a1*a1*B1*a2*a3*a4*b4*A4;;
y31:=A7*B5*a6*a7*a1*A7*A6*A5*b4*a7*B7*a1*a2*B2*A2*b1*a1*B1*A1*b7;;
y32:=a4*B4*A4*A3*A2*b1*a1*B1*a2*B2*a3*b3*A3*a2*B2*A2*b1*a1*B1*A1*b7*A7*B6*a7*a1;;
y33:=a1*a1*B1*a2*a3*a4*b4*A4*a3*B3*A3*b2*a3*a4*a5*B6*a7*b7*A7*B4*a5*B6*a7*b7*A7*B7*a1*b1*A1*B1*a2*b2*A2*a3*B3*A3*A2*b1*a1*B1*A1*b7*A7*B6*a7*a1*a3*B3*A3*b2*a3*a4*b4*A4*B3*a4*b4*A4*B4*a5*b5*A4*B3*a4*b4*A4;;
y34:=a1*a1*B1*a2*a3*a4*b4*A4*a3*B3*A3*b2*a3*B3*A3*A2*b1*a1*B1*A1*b7*A7*B6*a7*a1*a3*B3*A3*b2*a3*a4*b4*A4*B3*a4*b4*A4;;
y35:=a1*a1*B1*a2*B2*a3*b3*A3*a2*B2*A2*b1*a1*B1*A1*b7*a7*B7*A7*b6*A5*b4*a7*B7*A7*b6*A5*A4*B3*A3*A2*b1*a1*B1*A1*b7*A7*B6*a7*a1*a4*B4*A4*b3*a4*B5*A5*b4;;
y36:=a3*B3*A3*b2*a3*a4*b4*A4*B3*a4*b4*A4*B4*a5*b5*A4*B3*a4*b4*A4*a3*B3*A3*b2*a3*a4*a5*B6*a7*b7*A7*B4*a5*B6*a7*b7*A7*B7*a1*b1*A1*B1*a2*b2*A2;;
y37:=a1*a1*B1*a2*a3*a4*b4*A4*B3*a4*b4*A4*B4*a5*b5*A4*B3*A3*A2*b1*a1*B1*A1*b7*A7*B6*a7*a1*a3*B3*A3*b2*a3*a4*a5*B6*a7*b7*A7*B4*a5*B6*a7*b7*A7;;
y38:=a1*a1*B1*a2*B2*a3*b3*A3*B2*a3*b3*A3*B3*a4*b4*A4*B4*a5*b5*A4*B3*A3*A2*b1*a1*B1*A1*b7*A7*B6*a7*a1*a4*B4*A4*b3*a4*B4*a5*B6*a7*b7*A7*B4*a5*B6*a7*b7*A7;;
y39:=a4*B4*A4*b3*a4*B4*A4*A3*B2*a3*b3*A3*a2*B2*A2*b1*a1*B1*A1*b7*a1*a1*B1*a2*B2*a3*b3*A3*B2*a3*b3*A3*a4*B4*A4*A3*A2*b1*a1*B1*A1*b7*A7*B6*a7*a1;;
y40:=a7*B7*A7*b6*A5*b4*a7*B7*A7*b6*A5*b4*A4*B3*a4*b4*A4*a3*B3*A3*b2*a3*a4*B5*A5*b4*a4*B4*A4*b3*a3*B3*A3*b2*a3*B3*A3*b2*a3*a4*b4*A4*B3*a4*b4*A4;;
y41:=a3*B3*A3*b2*a3*a4*a5*B6*A6*b5*a5*A4*B3*a4*b4*A4*B3*a4*b4*A4*B4*a5*a6*a7*a1*a1*B1*A1*b7*A7*B4*a5*B6*a7*b7*A7*B4*a5*B6*a7*b7*A7*B7*a1*b1*A1*B1*a2*b2*A2;;
y42:=a3*B3*A3*b2*a3*a4*a5*A4*B3*a4*b4*A4*B3*a4*b4*A4*B4*a5*B6*a7*b7*A7*B7*a1*b1*A1*B1*a2*b2*A2;;
y43:=a3*B3*A3*b2*a3*a4*B5*a6*b6*A5*b4*A4*B3*a4*b4*A4*B3*a4*b4*A4*a7*B7*A7*b6*A5*b4*a7*B7*a1*b1*A1*A1*A7*A6*A5*b4*a4*B4*A4*b3*a3*B3*A3*b2;;
y44:=A7*B4*a5*B6*A6*b5*a5*B5*a6*a7*a1*a1*B1*A1*b7;;
y45:=a3*B3*A3*b2*a3*a4*B5*a6*a7*a1*a1*B1*A1*b7*A7*A4*B3*a4*b4*A4*B3*a4*b4*A4*a7*B7*A7*b6*A5*b4*a7*B7*A7*b6*A6*B6*a7*b7*A7*B7*a1*b1*A1*B1*a2*b2*A2;;
y46:=a3*B3*A3*b2*a3*a4*B5*a6*a7*a1*a1*B1*A1*b7*A7*A5*b4*A4*B3*a4*b4*A4*B3*a4*b4*A4*a7*B7*A7*b6*A5*b4*a7*B7*A7*b6*A6*A5*b4*a4*B4*A4*b3*a3*B3*A3*b2;;
y47:=a3*B3*A3*b2*a3*a4*A5*b4*A4*B3*a4*b4*A4*B3*a4*b4*A4*a7*B7*A7*b6*A5*b4*a4*B4*A4*b3*a3*B3*A3*b2;;
y48:=A7*A5*b4*a7*B7*A7*b6*A6*b5*A5*B5*a6*a7*a1*a1*B1*A1*b7;;
rels:=[R0,y1,y2,y3,y4,y5,y6,y7,y8,y9,y10,y11,y12,y13,y14,y15,y16,y17,y18,y19,y20,y21,y22,
  y23,y24,y25,y26,y27,y28,y29,y30,y31,y32,y33,y34,y35,y36,y37,y38,y39,y40,y41,y42,y43,y44,
  y45,y46,y47,y48];;
gamma4L:=a1*B1*A1*A7*b6*a7*a1*B1*a2*b2*A2*a1*B1*A1*b7;;
gamma4R:=a4*B4*A4*b3*a4*B4*A4*b3*A3*B2*a3*b3*A3*B2*a3*b3*A3*a2*B2*A2*b1*a1*B1*A1*b7*a4*B4*A4*b3*a4*B4*A4*b3*A3*B2*a3*b3*A3*B2*a3*b3*A3*a2*B2*A2*b1*a1*B1*A1*b7*a4*B4*A4*b3*a4*B4*A4*b3*a4*B4*A4*A3*B2*a3*b3*A3*B2*a3*b3*A3*a2*B2*A2*b1*a1*B1*A1*b7*a4*B4*A4*b3*a4*B4*A4*b3*a4*B4*A4*b3*A3*B2*a3*b3*A3*a2*B2*A2*b1*a1*B1*A1*b7*a4*B4*A4*b3*a4*B4*A4*b3*a4*B4*A4*b3*A3*B2*a3*b3*A3*B2*a3*b3*A3;;
gamma3L:=A7*B6*a7*a1*b1*A1*a2*B2*A2*b1*b1*b1*A1*A7*B6*a7*a1*b1*A1*a2*B2*A2*b1*a1*B1*A1*b7;;
gamma3R:=B3*A3;;
w:=gamma4L*gamma4R*gamma3L*gamma3R;;
\end{lstlisting}

\medskip
Continuing with these definitions, the following code computes the
order of \texttt{w} in the abelianization and in the maximal integral
class--$5$ nilpotent quotient, checking all defining relators under
the resulting maps.

\medskip
% (lstinputlisting) gap-genus7-sections.g
\begin{lstlisting}[firstline=2]
G:=F/rels;; LoadPackage("nq");;
for c in [1,5] do
  hom:=NqEpimorphismNilpotentQuotient(G,c);
  imgs:=List(GeneratorsOfGroup(G),x->Image(hom,x));
  image:=MappedWord(w,gens,imgs);
  ok:=ForAll(rels,r->IsOne(MappedWord(r,gens,imgs)));
  Print("Class ",c,": relators = ",ok,", order(w) = ",Order(image),"\n");
od;
\end{lstlisting}

\bigskip
The recorded results, in this format, are:
\begin{lstlisting}
Class 1: relators = true, order(w) = 1
Class 5: relators = true, order(w) = 2
\end{lstlisting}

\endgroup

%\bigskip

\bibliography{refs1}
\bibliographystyle{plain}
\setlength{\parskip}{0pt}

\end{document}